\newtheorem{remark}{Remark}
\newtheorem{assumption}{Assumption}
\newcommand {\be}{\begin{equation}}
\newcommand {\ee}{\end{equation}}  
\newcommand {\eps} {\varepsilon}
\newcommand {\alp} {\alpha}
\newcommand {\la} {\lambda}
\newcommand {\deri}[2] {\frac {\partial #1}{\partial #2}}
\newcommand {\derd}[3] {\frac {\partial^{2} #1}{\partial #2 \partial 
#3}}
\title{On simultaneous identification of the shape and generalized impedance boundary condition in obstacle scattering}
\author{Laurent Bourgeois\thanks{Laboratoire POEMS, ENSTA, 
 32, Boulevard Victor, 75739 Paris Cedex 15, France} 
        \and Nicolas Chaulet 
 \thanks{CMAP, INRIA Saclay Ile de France and Ecole Polytechnique, Route de Saclay, 91128 Palaiseau Cedex, France}
    \and Houssem Haddar 
   \thanks{CMAP, INRIA Saclay Ile de France and Ecole Polytechnique,
   Route de Saclay, 91128 Palaiseau Cedex, France}}
\begin{document}
\maketitle

\begin{abstract} 
We consider the inverse obstacle scattering problem of determining both the shape and the
``equivalent impedance'' from far field measurements at
a fixed frequency. In this work, the surface impedance is represented
by a
second order surface differential operator (refer to as generalized impedance boundary condition) as opposed to a scalar function. The generalized impedance boundary condition can be seen as a more accurate
model for effective impedances and is widely used in the scattering problem for thin coatings. Our approach is based on a least square optimization technique. A major part of our analysis is to characterize the derivative of the cost function with respect to the boundary and this complex surface impedance configuration.
In particular, we provide an extension of the notion of shape
derivative to the case where the involved impedance parameters do not need to be surface traces of given
functions, which leads (in general) to a non-vanishing 
tangential boundary perturbation. 
The efficiency of considering this type of derivative 
is 
illustrated by several 2D numerical experiments based on a (classical) steepest
descent method. The feasibility of retrieving both the shape and the
impedance parameters is also discussed in our numerical experiments.
\end{abstract}

\begin{keywords} 
Inverse scattering problem, Helmholtz equation, Generalized Impedance Boundary Conditions, Fr\'echet derivative, Steepest descent method. 
\end{keywords}

\begin{AMS}
\end{AMS}

\pagestyle{myheadings}
\thispagestyle{plain}
\markboth{On simultaneous identification of the shape of an obstacle and its GIBC}{}

\section{Introduction}
The identification of complex targets from measurements of
scattered waves is an important problem in inverse scattering theory that arises
in many real life applications, such as in non-destructive testing, and radar or
sonar applications. The solution of underlying inverse problem is challenging due to the
inherent ill-posedness and also the non-linearity of the problem. Therefore, it is desirable to base the inversion algorithms on a simplified direct scattering model, whenever possible.
Such an example, is the modeling of the direct scattering problem for an imperfectly absorbing target or an obstacle coated by a thin layer, as boundary value problems with complicated surface impedance known as Generalized Impedance Boundary   
Conditions (GIBC). 
The most simplified model is the so-called impedance or Robin-Fourier boundary
condition. However, as demonstrated in many recent studies (see \cite{BenLem96,DurHadJol06,HadJol02,HadJolNgu05,haddar}), 
such impedance boundary condition can not accurately model many instances of complex materials in the case of  
multi-angle illuminations of
coated (and/or corrugated) surfaces. For such materials, a better approximation is the surface impedance in the form of a second order surface operator.
The goal of this work is to solve the inverse scattering problem for targets made of complex material based on an approximate model of the direct scattering as a boundary value problem with complex surface impedance. More precisely we shall consider the scalar problem for the Helmholtz equation
(which corresponds to the scattering of acoustic waves, or electromagnetic waves specially polarized)
and consider GIBC of the form
\[
\deri{u}{\nu} + {\rm div}_{\Gamma} (\mu \nabla_{\Gamma}u)+\la u=0 \quad \mbox{ on } \Gamma,
\]
where $\Gamma$ is the boundary of an obstacle $D$, $\mu$ and $\la$ are complex
valued functions, ${\rm div}_{\Gamma}$ and $\nabla_{\Gamma}$ 
the surface divergence and the surface gradient on $\Gamma$, respectively and $\nu$ denotes
the unit normal to $\Gamma$ directed to the exterior of $D$. 
As a particular example of electromagnetic scattering, if the scatterer is a perfect conductor coated with a thin
dielectric layer, for the transverse electric polarization, 
the  GIBC model corresponds to  $\mu=\delta$ and $\lambda=\delta k^2 n$, where
$k$ denotes the wave number, $\delta$ is the (possibly non constant) width of
the layer and $n$ is its refractive index \cite{haddar}.

The inverse problem under investigation is then to reconstruct both the obstacle
$D$ and the coefficients $\la$ and $\mu$ from far fields measurements at a fixed frequency (wave number).
This problem for the case of $\mu=0$ (which corresponds to the classical impedance boundary
condition) has been addressed for instance in \cite{CaCo04,Ser06,LiuNakSin07,HeKinSin09} for the Helmholtz equation and in \cite{Ru08,Ba09,Si10,CaKrSc10}  for the Laplace equation. The case $\mu \neq 0$ has been considered in
\cite{bourgeois_haddar,BoChHa11} where the main interest was prove the unique characterization of 
the impedance operator from a single measurement assuming that the boundary is known a priori, as well as the study of the question of stability, in particular when only an approximation of such boundary is known. In the present work, in addition to the impedance operator, the geometry of the obstacle is also unknown and hence we assume that measurements corresponding to several incident plane waves are available.

We  first prove the uniqueness for both $D$ and ($\la$,$\mu$) from a knowledge of the far field patterns for all possible incident and
observation directions.
Our proof
uses the technique of mixed reciprocity relation as in \cite{kirsch_kress,kress_rundell}. 
To solve the inverse problem we use an iterative Newton approach for nonlinear optimization problems.
The
main contribution of our analysis is the characterization of the  derivative of the far field with respect 
to the domain $D$ when the impedance parameters $(\la,\mu)$ are unknown. 
At a first glance, this may appear as a simple exercise using shape derivative
tools, as described for example in the monographs \cite{All,HenPie,sokzol}.
 However, it turned out that the following two specific issues of the problem seriously complicate this task. The first one is due to the 
 second-order surface differential operator appearing in the expression of the generalized impedance boundary condition.
 As it will become clear from the paper, this requires involved calculations which lead to non-intuitive final expressions for the derivatives. 
\\
The second major issue is not
attributed to GIBC but rather to the fact that the coefficients $\la$ and/or $\mu$
are unknown functions supported on the unknown boundary $\partial D$. This
configuration makes unclear the definition of the partial derivative with respect to $D$.
To overcome this ambiguity we shall adapt the usual definition of partial
derivative with respect to $D$ by defining an appropriate
extension of the impedance parameters $\la$ and $\mu$ to the perturbed boundary $\partial
D_\eps=\partial D + \eps(\partial D)$, where $\eps$ is a perturbation of
$\partial D$ (see definition \ref{frechet} hereafter).
As a surprising result, the obtained derivative depends not only on the normal component of the
perturbation $\eps$ but also on the tangential component (see Theorem \ref{main}
hereafter). In order to derive these results we adopted a method based on
integral representation of the solutions as introduced in \cite{KrePai} for the
case of Dirichlet or Neumann boundary conditions and in \cite{haddar_kress} for
the case of constant impedance boundary conditions. 
An alternative technique could be the use of shape derivative tools, but
this is not subject of our study as we believe that it is probably more technical.\\
The last part of our work is dedicated to the investigation of the numerical reconstruction algorithm based on a
classical least square optimization formulation of the problem which is solved by using a steepest descent method with regularization of the descent
direction. The
forward problem is solved by using a finite element method and the
reconstruction update is based on a boundary variation technique (which requires 
re-meshing of the computational domain at each step).   
The goal of our numerical study is first to illustrate the efficiency of
considering the proposed non conventional form of the shape derivative, and second 
to discuss the feasibility of retrieving both the obstacle and the
impedance functions in the impedance operator. 

The outline of the paper is as follows. In Section 2, we describe the direct and inverse problems. In Section 3, 
we prove uniqueness for the considered inverse problem, whereas Section 4 is
dedicated to the evaluation of the  derivative of the far field with respect to
the boundary of the obstacle. In Section 5 we describe the optimization technique based on a
least square formulation of the inverse problem. Various numerical tests in the $2D$ case
showing the efficiency of the proposed steepest descent method are presented in
Section 6. A technical lemma related to some differential geometry identities that are used in our analysis is presented in an
Appendix.

\section{The statement of the inverse problem} 
Let $D$ be an open bounded domain of $\mathbb{R}^d$, with $d=2$ or $3$, the boundary $\partial D$ of which is Lipschitz continuous, such that $\Omega=\mathbb{R}^d \setminus \overline{D}$ is connected and let $(\la,\mu)\in (L^{\infty}(\partial D))^2$ be some impedance coefficients. The scattering problem with generalized impedance boundary conditions (GIBC) consists in finding $u = u^s+ u^i$ such that
\begin{equation}
\label{PbInit}
\left\{
\begin{aligned}
& \Delta u + k^2 u = 0  \quad \text{in} \; \Omega\\
& \frac{\partial u}{\partial \nu} + {\rm div}_{\Gamma} (\mu \nabla_{\Gamma} u)+ \la u =0  \quad \text{on} \; \partial D\\
& \lim \limits_{R \to \infty} \int_{|x|=R}\left|\deri{u^s}{r} - iku^s \right|^2ds =0.
\end{aligned}
\right.
\end{equation}
Here
$k$ is the wave number, $u^i = e^{ik\hat{d}\cdot x}$ is an incident plane wave where $\hat{d} $ belongs to the unit sphere of $\mathbb{R}^d$ denoted $S^{d-1}$, and
$u^s \in V(\Omega) :=  \{ v \in \mathcal{D}'(\Omega) ,\varphi v \, \in \, H^1(\Omega) \, \forall \, \varphi \, \in \mathcal{D}(\mathbb{R}^d) \textrm{  and  }  v_{|\partial D} \, \in \,  H^1(\partial D) \}$ is the scattered field.\\
The surface operators ${\rm div}_{\Gamma}$ and $\nabla_{\Gamma}$ are precisely defined in Chapter 5 of \cite{HenPie}. For $v \in H^1(\partial D)$ the surface gradient $\nabla_{\Gamma}v$ lies in  $L^2_\Gamma(\partial D):=\{V \in L^2(\partial D,\mathbb{R}^d)\, ,\, V\cdot\nu=0\}$ while ${\rm div}_{\Gamma} (\mu \nabla_{\Gamma} u)$ is defined in $H^{-1}(\partial D)$ for $\mu \in L^{\infty}(\partial D)$ by
\begin{equation} 
 \langle {\rm div}_{\Gamma} (\mu \nabla_{\Gamma} u), v \rangle_{H^{-1}(\partial D),H^{1}(\partial D)} := - \int_{\partial D} \mu \nabla_{\Gamma}u \cdot\nabla_{\Gamma}v\, ds \quad \forall v \in H^{1}(\partial D).
\label{bypart}
\end{equation}
The last equation in (\ref{PbInit}) is the classical Sommerfeld radiation condition.
The proof for well--posedness of problem (\ref{PbInit}) and the numerical computation of its solution can be done using the so--called Dirichlet--to--Neumann map so that we can give an equivalent formulation of (\ref{PbInit}) in a bounded domain $\Omega_R = \Omega \cap B_R$ where $B_R$ is the ball of radius $R$ such that $D \subset B_R$. The Dirichlet--to--Neumann map, $S_R : H^{1/2}(\partial B_R) \mapsto  H^{-1/2}(\partial B_R) $ is defined for $g \in H^{1/2}(\partial B_R)$ by $S_Rg :={ \partial{u^e}/\partial{r}}|_{{\partial B_R}}$ where $u^e \in V(\mathbb{R}^d \setminus \overline{B_R})$ is the radiating solution of the Helmholtz equation outside $B_R$  and $u^e = g$ on $\partial B_R$.\\ 
Solving (\ref{PbInit}) is equivalent to find  $u$ in $V_R:=\{v \, \in \, H^1(\Omega_R);\, v_{|\partial D} \, \in \,  H^1(\partial D)\}$ such that:
\begin{equation}
\label{PbBorne}
\left\{
\begin{aligned}
&\Delta u + k^2u = 0  \quad \textrm{in} \; \Omega_R\\
& \frac{\partial u}{\partial \nu} + {\rm div}_{\Gamma} (\mu \nabla_{\Gamma} u) + \lambda u =0  \quad \textrm{on} \; \partial D\\
& \deri{u}{r} - S_R(u) =\deri{u^i}{r}-S_R(u^i) \; \text{ on } \partial B_R.
\end{aligned}
\right.
\end{equation}
We introduce the assumption
\begin{assumption}
\label{AsPbdirect}
The coefficients $(\lambda,\mu) \in (L^\infty(\partial D))^2$ are such that
\[
 \Im m(\lambda)\geq 0,\; \Im m(\mu)\leq 0 \quad \text{ a.e. in } \partial D
\]
and there exists $c>0$ such that 
\[
 \Re e(\mu) \geq c \qquad \text{ a.e. in } \partial D.
 \]
\end{assumption}
\noindent Well--posedness of problem (\ref{PbBorne}) is established in the following theorem, the proof of which is classical and given in \cite{BoChHa10report}.
\begin{theorem}
\label{ThDirect}
With assumption \ref{AsPbdirect} the problem (\ref{PbBorne}) has a unique solution $u$ in $V_R$.
\end{theorem}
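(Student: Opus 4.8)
The plan is to establish well-posedness of the variational formulation of (\ref{PbBorne}) via the Lax--Milgram framework, or more precisely via a Fredholm alternative argument combined with uniqueness. First I would write down the variational formulation associated with (\ref{PbBorne}): find $u \in V_R$ such that for all $v \in V_R$,
\[
a(u,v) = \ell(v),
\]
where, after multiplying the Helmholtz equation by $\overline{v}$, integrating over $\Omega_R$, using Green's formula and substituting the boundary conditions (in particular the definition (\ref{bypart}) of ${\rm div}_{\Gamma}(\mu \nabla_{\Gamma} u)$ to transfer the surface derivative onto the test function), the sesquilinear form reads
\[
a(u,v) = \int_{\Omega_R} \bigl(\nabla u \cdot \nabla \overline{v} - k^2 u \overline{v}\bigr)\, dx
+ \int_{\partial D} \bigl(\mu \nabla_{\Gamma} u \cdot \nabla_{\Gamma} \overline{v} - \la u \overline{v}\bigr)\, ds
- \int_{\partial B_R} S_R(u)\, \overline{v}\, ds,
\]
and $\ell$ collects the data from the last line of (\ref{PbBorne}). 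The appearance of the surface term $\int_{\partial D} \mu \nabla_{\Gamma} u \cdot \nabla_{\Gamma}\overline{v}\, ds$ is exactly why the natural energy space is $V_R$ rather than plain $H^1(\Omega_R)$, since it controls the $H^1(\partial D)$ seminorm of the trace.

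Next I would decompose $a = a_1 + a_2$, where $a_1$ gathers the \emph{coercive} principal part and $a_2$ the \emph{compact} remainder. Concretely, I would take
\[
a_1(u,v) = \int_{\Omega_R}\bigl(\nabla u \cdot \nabla \overline{v} + u\overline{v}\bigr)dx
+ \int_{\partial D}\mu\,\nabla_{\Gamma}u\cdot\nabla_{\Gamma}\overline{v}\,ds
- \int_{\partial B_R} S_R(u)\,\overline{v}\,ds,
\]
absorbing the sign-problematic volume term $-(k^2+1)\int_{\Omega_R}u\overline{v}$ and the surface term $-\int_{\partial D}\la u\overline{v}$ into $a_2$. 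The key step is to show $a_1$ is coercive on $V_R$. Using Assumption~\ref{AsPbdirect}, the bound $\Re e(\mu)\geq c>0$ gives $\Re e\!\int_{\partial D}\mu|\nabla_{\Gamma}u|^2\,ds \geq c\|\nabla_{\Gamma}u\|_{L^2(\partial D)}^2$, which together with the $H^1(\Omega_R)$ volume terms controls the full $V_R$-norm. The Dirichlet-to-Neumann term requires the standard sign property of $S_R$: one decomposes $S_R = S_R^0 + K$ where $-\int_{\partial B_R}S_R^0(u)\overline{u}\,ds$ has the right sign (nonnegative real part) and $K$ is compact from $H^{1/2}(\partial B_R)$ to $H^{-1/2}(\partial B_R)$; this is classical from the explicit series expansion of $S_R$ in spherical/cylindrical harmonics. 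The remainder $a_2$ is compact by the compact embeddings $H^1(\Omega_R)\hookrightarrow L^2(\Omega_R)$ (Rellich) and $H^1(\partial D)\hookrightarrow L^2(\partial D)$, so the associated operator is a compact perturbation of an isomorphism.

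By the Fredholm alternative, existence then reduces to uniqueness: it suffices to show that any $u\in V_R$ solving the homogeneous problem ($\ell=0$) vanishes. Here I would take $v=u$, extract the imaginary part of $a(u,u)=0$, and use the sign conditions $\Im m(\la)\geq 0$, $\Im m(\mu)\leq 0$ from Assumption~\ref{AsPbdirect} together with the dissipativity of the Dirichlet-to-Neumann map, $\Im m\!\int_{\partial B_R}S_R(u)\overline{u}\,ds \geq 0$, to conclude that the radiating field carries no outgoing energy. This forces $u=0$ on $\partial B_R$ in a suitable sense, and then Rellich's lemma together with unique continuation for the Helmholtz equation propagates the vanishing into all of $\Omega$, giving $u\equiv 0$.

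The main obstacle I anticipate is handling the Dirichlet-to-Neumann operator $S_R$ rigorously on the space $V_R$: one must verify both the coercivity-compatible sign splitting and the dissipativity estimate, which depend on the explicit modal structure of $S_R$ and on mapping properties between the fractional trace spaces $H^{\pm 1/2}(\partial B_R)$. A secondary technical point is the interplay between the two boundary regularities, namely ensuring that the trace of a $V_R$-function on $\partial D$ genuinely lies in $H^1(\partial D)$ so that the surface form is well-defined and the embedding yielding compactness of $a_2$ is available. Since the statement asserts this is classical with a reference to \cite{BoChHa10report}, I would not belabor these estimates but rather organize the proof cleanly around the coercive-plus-compact splitting and the imaginary-part uniqueness argument.
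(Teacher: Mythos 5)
Your proposal is correct and takes essentially the same route as the paper: the paper itself gives no proof of Theorem~\ref{ThDirect}, calling it classical and deferring to \cite{BoChHa10report}, and the classical argument is precisely the one you outline --- variational formulation on $V_R$, a coercive-plus-compact splitting using $\Re e(\mu)\geq c$ together with the modal sign decomposition of $S_R$, Fredholm's alternative, and uniqueness from the imaginary part of $a(u,u)$ (using $\Im m(\la)\geq 0$, $\Im m(\mu)\leq 0$ and the dissipativity of $S_R$) followed by Rellich's lemma and unique continuation. I see no gaps to flag.
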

\\
\noindent In order to define the inverse problem, we recall now the definition of the far field associated to a scattered field. 
From \cite{book-CK98}, the scattered field has the asymptotic behavior:
\[
 u^s(x) = \frac{e^{ikr}}{r^{(d-1)/2}}\left(u^{\infty}(\hat{x}) + \mathcal{O}\left(\frac{1}{r}\right)\right)\qquad r \longrightarrow +\infty
\]
uniformly for all the directions $\hat{x} = x/r \in S^{d-1}$ with $r=|x|$, and the far field $u^{\infty} \in L^2(S^{d-1})$ has the following integral representation
\begin{equation}
\label{uinf}
 u^{\infty}(\hat{x}) = \int_{\partial D}\left( u^s(y)\frac{\partial \Phi ^{\infty}(\hat{x},y)}{\partial\nu(y)} - \frac{\partial u^s(y)}{\partial\nu}\Phi ^{\infty}(\hat{x},y) \right) ds(y) \quad \forall \hat{x} \in S^{d-1}.
\end{equation}
Here $\Phi ^{\infty}(\cdot,y)$ is the far field associated with the Green function $\Phi(\cdot,y)$ of the Helmholtz equation.
The function $\Phi(\cdot,y)$ is defined in $\mathbb{R}^2$ by $\Phi(x,y)=(i/4)H_0^1(k|x-y|)$, where $H_0^1$ is the Hankel function of the first kind and of order $0$,
and in $\mathbb{R}^3$ by $e^{ik|x-y|}/(4\pi|x-y|)$. The associated far fields are defined in $S^1$ by $(e^{i\pi/4}/\sqrt{8\pi k})e^{-iky\cdot \hat{x}}$ and in $S^2$ by
$(1/4\pi) e^{-iky\cdot \hat{x}}$ respectively. The second integral in (\ref{uinf}) has to be understood as a duality pairing between $H^{-1/2}(\partial D)$ and  $H^{1/2}(\partial D)$.
\noindent We are now in a position to introduce the far field map $T$,
\[
 T : \quad (\lambda,\mu,\partial{D}) \rightarrow u^{\infty}
\]
where $u^{\infty}$ is the far field associated with the scattered field $u^s = u - u^i$ and $u$ is the unique solution of problem (\ref{PbInit}) with obstacle $D$ and impedances $(\la,\mu)$ on $\partial D$. \\
The general inverse problem we are interested in is the following: given incident plane waves of directions $\hat{d} \in S^{d-1}$, is it possible to reconstruct the obstacle $D$ as well as the impedances $\lambda$ and $\mu$ defined on $\partial D$ from the corresponding far fields $u^{\infty}=T(\lambda,\mu,\partial{D})$?
The first question of interest is the identifiability of $(\lambda,\mu,\partial{D})$ from the far field data.
\section{A uniqueness result}
In this section, we provide a uniqueness result concerning identification of both the obstacle $D$ and the impedances $(\lambda,\mu)$ from the far fields associated to plane waves with all incident directions $\hat{d} \in S^{d-1}$. In this respect we denote by $u^\infty(\hat{x},\hat{d})$ the far field in the $\hat{x}$ direction associated to the plane wave with direction $\hat{d}$.
In the following, we introduce some regularity assumptions for the obstacle $D$ and the impedances $\la,\mu$.
\begin{assumption}
\label{AsPbinverse}
The boundary $\partial D$ is $C^2$, and the impedances satisfy $\la \in C^0(\partial D)$ and $\mu \in C^1(\partial D)$. 
\end{assumption}
\\
\noindent The main result is the following theorem, which is a generalization of the uniqueness result for $\mu=0$ proved in \cite{kress_rundell}.
\begin{theorem}
\label{1}
Assume that $(\la_1,\mu_1,\partial D_1)$ and $(\la_2,\mu_2,\partial D_2)$ satisfy assumptions \ref{AsPbdirect} and \ref{AsPbinverse}, and the corresponding far fields $u_1^\infty=T(\la_1,\mu_1,\partial D_1)$ and $u_2^\infty=T(\la_2,\mu_2,\partial D_2)$
satisfy $u_1^\infty(\hat{x},\hat{d})=u_2^\infty(\hat{x},\hat{d})$ for all $\hat{x} \in S^{d-1}$ and $ \hat{d} \in S^{d-1}$. Then $D_1=D_2$ and $(\la_1,\mu_1)=(\la_2,\mu_2)$.
\end{theorem}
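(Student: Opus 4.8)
The plan is to establish the two conclusions $D_1=D_2$ and $(\la_1,\mu_1)=(\la_2,\mu_2)$ successively, the shape identification being the heart of the matter. Write $\Omega_j=\mathbb{R}^d\setminus\overline{D_j}$ and let $G$ denote the unbounded connected component of $\mathbb{R}^d\setminus\overline{D_1\cup D_2}$. For a source point $z\in G$ let $\Phi_j^s(\cdot,z)$ be the scattered field generated by the incident point source $\Phi(\cdot,z)$ for the obstacle $D_j$, and $\Phi_j^\infty(\hat x,z)$ its far field. The first step is to convert plane-wave data into point-source data. The hypothesis $u_1^\infty\equiv u_2^\infty$ gives, by Rellich's lemma, $u_1^s(\cdot,\hat d)=u_2^s(\cdot,\hat d)$ in $G$ for every $\hat d$; deriving the mixed reciprocity relation for the present GIBC model (as in \cite{kirsch_kress,kress_rundell}), which expresses $\Phi_j^\infty(\hat x,z)$ through the plane-wave scattered field $u_j^s(z,-\hat x)$, then transfers this to $\Phi_1^\infty(\hat x,z)=\Phi_2^\infty(\hat x,z)$ for all $\hat x\in S^{d-1}$ and $z\in G$, and a further application of Rellich yields $\Phi_1^s(\cdot,z)=\Phi_2^s(\cdot,z)$ in $G$ for every $z\in G$.

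To obtain $D_1=D_2$ I would argue by contradiction. If the boundaries differ then, exchanging the obstacles if necessary, there is a point $z^\ast\in\partial D_1$ possessing a neighbourhood contained in $\Omega_2$; set $z_n=z^\ast+\tfrac1n\nu(z^\ast)\in G$, so that $z_n\to z^\ast$. Since $z^\ast\notin\overline{D_2}$ and $(x,z)\mapsto\Phi_2^s(x,z)$ is continuous up to $(z^\ast,z^\ast)$, the values $\Phi_2^s(z_n,z_n)$ remain bounded. The crux is to show that the self-field $\Phi_1^s(z_n,z_n)$ instead blows up as $z_n\to z^\ast\in\partial D_1$, which contradicts the equality $\Phi_1^s(z_n,z_n)=\Phi_2^s(z_n,z_n)$ valid on $\overline{G}$. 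This blow-up is \emph{the main obstacle}, and it is precisely here that the second-order surface operator complicates the classical treatment: unlike the Dirichlet case, one cannot read the field value off the boundary condition. Instead I would represent $\Phi_1^s(z_n,z_n)$ through Green's identity applied to the total field $\Phi_1(\cdot,z_n)=\Phi(\cdot,z_n)+\Phi_1^s(\cdot,z_n)$ in $\Omega_1$ and eliminate the normal derivative via the GIBC, replacing $\partial_\nu\Phi_1(\cdot,z_n)$ by $-{\rm div}_\Gamma(\mu\nabla_\Gamma\Phi_1(\cdot,z_n))-\la\Phi_1(\cdot,z_n)$. The resulting boundary energy $\int_{\partial D_1}\big(\mu|\nabla_\Gamma\Phi_1(\cdot,z_n)|^2-\la|\Phi_1(\cdot,z_n)|^2\big)\,ds$ has its imaginary part controlled by the sign conditions of Assumption \ref{AsPbdirect}, so that the GIBC solution map is an isomorphism on the relevant trace spaces; since the traces of $\Phi(\cdot,z_n)$ and of $\nabla_\Gamma\Phi(\cdot,z_n)$ become singular at $z^\ast$, no cancellation can occur and $|\Phi_1^s(z_n,z_n)|\to\infty$, yielding the contradiction. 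Hence $D_1=D_2=:D$.

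For the impedances, equality of the far fields for every incident direction together with $D_1=D_2$ gives, via Rellich, that the total fields coincide, $u_1(\cdot,\hat d)=u_2(\cdot,\hat d)=:u_{\hat d}$ in $\Omega$ for all $\hat d\in S^{d-1}$, so that the Cauchy data of $u_{\hat d}$ on $\partial D$ agree. Subtracting the two GIBC satisfied by the common field yields
\[
{\rm div}_\Gamma\big((\mu_1-\mu_2)\nabla_\Gamma u_{\hat d}\big)+(\la_1-\la_2)\,u_{\hat d}=0 \quad\text{on } \partial D,\qquad \forall\,\hat d\in S^{d-1}.
\]
Writing $\mu:=\mu_1-\mu_2$ and $\la:=\la_1-\la_2$, the quickest route is to invoke the single-measurement identifiability of the impedance pair on a known boundary established in \cite{bourgeois_haddar,BoChHa11}, which now applies since $\partial D$ is common and delivers $(\la_1,\mu_1)=(\la_2,\mu_2)$ at once. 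A self-contained argument would instead combine two incident directions $\hat d_1,\hat d_2$ to eliminate $\la$: with $v_j:=u_{\hat d_j}$, one has $v_2\,{\rm div}_\Gamma(\mu\nabla_\Gamma v_1)-v_1\,{\rm div}_\Gamma(\mu\nabla_\Gamma v_2)={\rm div}_\Gamma\big(\mu(v_2\nabla_\Gamma v_1-v_1\nabla_\Gamma v_2)\big)$, so that
\[
{\rm div}_\Gamma\big(\mu\,(v_2\nabla_\Gamma v_1-v_1\nabla_\Gamma v_2)\big)=0 \quad\text{on } \partial D.
\]
This transport-type identity for $\mu$, together with unique continuation (which prevents the total fields and the associated surface Wronskian from vanishing on open subsets of $\partial D$) and a suitable choice of incident directions, forces $\mu\equiv0$; substituting back gives $\la\,u_{\hat d}\equiv0$ and hence $\la\equiv0$. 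Either way $(\la_1,\mu_1)=(\la_2,\mu_2)$, which completes the proof.
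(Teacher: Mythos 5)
Your first step (Rellich's lemma, mixed reciprocity, Rellich again to obtain $\Phi_1^s(\cdot,z)=\Phi_2^s(\cdot,z)$ in the common exterior for all source points $z$) and the contradiction set-up at a boundary point $z^\ast\in\partial D_1$ approached along the normal are exactly the paper's. But your crux --- that the self-field $\Phi_1^s(z_n,z_n)$ blows up --- is asserted, not proved, and the justification you offer cannot close it. Well-posedness of the GIBC problem under Assumption \ref{AsPbdirect} (your ``isomorphism on the relevant trace spaces'') yields \emph{upper} bounds of the solution in terms of the data; a blow-up statement requires a \emph{lower} bound on the single point value $\Phi_1^s(z_n,z_n)$, and ``the traces become singular so no cancellation can occur'' is precisely the assertion that needs proof: the Green representation of the self-field is an integral of products of singular kernels in which cancellation is a priori possible, and for a GIBC even formulating the problem with the blowing-up data $\Phi(\cdot,z_n)|_{\partial D_1}$ in $H^1(\partial D_1)$ must be handled. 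The paper avoids pointwise blow-up altogether and argues through regularity: on $\Gamma_*\subset(\partial D_1\cap\partial\tilde\Omega)\setminus\overline{D_2}$ one has $P_1u_2^s(\cdot,x_n)=-P_1\Phi(\cdot,x_n)$ with $P_1v:=\partial_\nu v+{\rm div}_{\Gamma}(\mu_1\nabla_{\Gamma}v)+\lambda_1 v$; letting $n\to\infty$ (this is where the $C^2$ boundary and $\lambda_1\in C^0$, $\mu_1\in C^1$ of Assumption \ref{AsPbinverse} are used) shows that $P_1\Phi(\cdot,x_*)\in L^2(\Gamma_*)$, and an auxiliary scattering problem for the cut-off function $\phi\,\Phi(\cdot,x_*)$ with this $L^2$ boundary data then forces $\Phi(\cdot,x_*)\in H^1$ near $x_*$ --- explicitly contradicted by the divergent cone integral $\int_{C_*}|x-x_*|^{-4}\,dx=+\infty$ in $3$D. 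To salvage your route you would have to supply a quantitative lower bound for the self-interaction of a GIBC obstacle, a substantial piece of analysis your sketch does not contain.

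Your second step also has a gap in each of its two branches. The ``quickest route'' overreaches: the single-measurement identifiability in \cite{bourgeois_haddar,BoChHa11} concerns constant (or otherwise constrained) impedances on a known boundary; for functional $(\lambda,\mu)$ the relation ${\rm div}_{\Gamma}(\mu\nabla_{\Gamma}u)+\lambda u=0$ is one scalar equation on $\partial D$ for two unknown functions, so a single wave cannot determine the pair --- this is precisely why the present paper exploits all incident directions. Your self-contained Wronskian identity ${\rm div}_{\Gamma}\bigl(\mu(v_2\nabla_{\Gamma}v_1-v_1\nabla_{\Gamma}v_2)\bigr)=0$ is correct but the argument is left unfinished: unique continuation in the exterior does not control the tangential Wronskian on $\partial D$ (nonvanishing of $v_2\nabla_{\Gamma}v_1-v_1\nabla_{\Gamma}v_2$ on open subsets is unproved), ``a suitable choice of incident directions'' is not exhibited, the conclusion is unclear for $d=3$, and even the final step $\lambda u_{\hat d}\equiv 0\Rightarrow\lambda\equiv0$ silently needs that $u_{\hat d}$ does not vanish on open subsets of $\partial D$ (a Holmgren-type argument). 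The paper's mechanism is different and complete: by the symmetry of the pairing (\ref{bypart}), the relation tested against any $\phi\in H^1(\partial D)$ reads $\langle u(\cdot,\hat d),{\rm div}_{\Gamma}(\mu\nabla_{\Gamma}\phi)+\lambda\phi\rangle=0$ for all $\hat d$, and the density Lemma \ref{density} (the key ingredient your proposal never invokes) upgrades this to ${\rm div}_{\Gamma}(\mu\nabla_{\Gamma}\phi)+\lambda\phi=0$ for \emph{every} $\phi\in H^1(\partial D)$; taking $\phi=1$ gives $\lambda=0$, and then $\int_{\partial D}\mu|\nabla_{\Gamma}\phi|^2\,ds=0$ for all $\phi$, combined with continuity of $\mu$ and a localized test function, gives $\mu=0$.
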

\\
\noindent The proof of the above theorem is based on several results, the first one
is the mixed reciprocity lemma and does not require the regularity assumption \ref{AsPbinverse}. 
\begin{lemma}
\label{reciprocity}
Let $w^\infty(\cdot,z)$ be the far field associated to the incident field $\Phi(\cdot,z)$ with $z \in \Omega$, and $u^s(\cdot,\hat{x})$ be the scattered field associated to the plane wave of direction $\hat{x} \in S^{d-1}$.
Then 
\[w^\infty(-\hat{x},z)=\gamma(d)\,u^s(z,\hat{x}),\]
with $\gamma(2)= e^{i\pi/4}/\sqrt{8\pi k}$ and  $\gamma(3)=1/4\pi$.
\end{lemma}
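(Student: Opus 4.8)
The plan is to read off $w^\infty(-\hat{x},z)$ from the integral representation (\ref{uinf}), now applied to the scattered field $w^s(\cdot,z)$ produced by the point-source incidence $\Phi(\cdot,z)$ and evaluated in the direction $-\hat{x}$. The explicit formula for $\Phi^\infty$ recalled after (\ref{uinf}) gives $\Phi^\infty(-\hat{x},y)=\gamma(d)\,e^{iky\cdot\hat{x}}=\gamma(d)\,u^i(y,\hat{x})$, and the same relation for its normal derivative, so the constant $\gamma(d)$ factors out and one is left with
\[
w^\infty(-\hat{x},z)=\gamma(d)\int_{\partial D}\Big(w^s(y,z)\,\frac{\partial u^i}{\partial\nu}(y,\hat{x})-\frac{\partial w^s}{\partial\nu}(y,z)\,u^i(y,\hat{x})\Big)\,ds(y).
\]
Everything then reduces to showing that this boundary integral, call it $I$, equals $u^s(z,\hat{x})$.

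To evaluate $I$ I would introduce the two total fields $u=u^i+u^s$ (plane wave) and $w=\Phi(\cdot,z)+w^s$ (point source), substitute $w^s=w-\Phi(\cdot,z)$ and $u^i=u-u^s$, and regroup, so that $I$ becomes a combination of four boundary integrals of Wronskian type $\int_{\partial D}(a\,\partial_\nu b-b\,\partial_\nu a)\,ds$, associated respectively with the pairs $(\Phi(\cdot,z),u^i)$, $(u^s,w^s)$, $(\Phi(\cdot,z),u^s)$ and $(u,w)$. Three of these are disposed of by standard Green-type arguments: the pairing of $\Phi(\cdot,z)$ against $u^i$ vanishes because both solve the Helmholtz equation in $D$, where $\Phi(\cdot,z)$ is regular since $z\in\Omega$ (Green's second identity over $D$); the pairing of the two radiating fields $u^s$ and $w^s$ vanishes, since for radiating solutions the term on $\partial D$ equals the limit of the term on $\partial B_R$, which tends to zero by the Sommerfeld condition; and the pairing of $\Phi(\cdot,z)$ against the radiating field $u^s$ is precisely the Green representation formula for $u^s$ at the point $z\in\Omega$, and so produces $u^s(z,\hat{x})$ up to sign.

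The heart of the matter, and the step I expect to be the real obstacle, is the remaining pairing $\int_{\partial D}(u\,\partial_\nu w-w\,\partial_\nu u)\,ds$ of the two total fields. Using the boundary condition in (\ref{PbInit}) to replace each normal derivative $\partial_\nu u$ by $-{\rm div}_{\Gamma}(\mu\nabla_{\Gamma} u)-\lambda u$, and likewise for $w$, the $\lambda$-terms cancel and what survives is
\[
\int_{\partial D}\Big(w\,{\rm div}_{\Gamma}(\mu\nabla_{\Gamma} u)-u\,{\rm div}_{\Gamma}(\mu\nabla_{\Gamma} w)\Big)\,ds,
\]
which vanishes because the second-order surface operator is symmetric: by the definition (\ref{bypart}), each term equals $-\int_{\partial D}\mu\,\nabla_{\Gamma} u\cdot\nabla_{\Gamma} w\,ds$, with no boundary contributions since $\partial D$ is a closed surface. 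This symmetry is the only place where the generalized impedance condition (rather than a Dirichlet or Neumann condition) enters, and it is what makes the mixed reciprocity relation survive unchanged for the surface operator. The only point requiring care is that the surface duality pairings be well defined, but the traces of $u$ and $w$ belong to $H^1(\partial D)$ by definition of $V(\Omega)$, so the use of (\ref{bypart}) is legitimate. Assembling the four contributions with the correct signs yields $I=u^s(z,\hat{x})$, and hence $w^\infty(-\hat{x},z)=\gamma(d)\,u^s(z,\hat{x})$.
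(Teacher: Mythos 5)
Your proposal is correct and is essentially the paper's own argument: the four Wronskian-type pairings you isolate --- $(u,w)$ killed by the symmetry of the GIBC via (\ref{bypart}), $(\Phi(\cdot,z),u^i)$ by Green's second identity in $D$, $(u^s,w^s)$ by the radiation condition, and $(\Phi(\cdot,z),u^s)$ producing the Green representation of $u^s(z,\hat{x})$ --- are exactly the ingredients the paper uses. The only difference is organizational: the paper first packages the vanishing of $\int_{\partial D}(u_1\,\partial_\nu u_2-u_2\,\partial_\nu u_1)\,ds$ into the exchange relation (\ref{echange}) and then specializes $u^i_2=\Phi(\cdot,z)$ before comparing with (\ref{uinf}), whereas you expand the far-field representation directly and verify the same identities term by term.
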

\begin{proof}
For two incident fields $u^i_1$ and $u^i_2$, the associated total fields $u_1$ and $u_2$ satisfy
\begin{align*}
\int_{\partial D}\left(u_1\deri{u_2}{\nu} \right. &\left.-u_2\deri{u_1}{\nu}\right)ds \\
= \int_{\partial D}\left(\mu \nabla_{\Gamma}u_1\cdot\nabla_{\Gamma}u_2-\right. &\left. \la u_1 u_2  - \mu \nabla_{\Gamma}u_1\cdot\nabla_{\Gamma}u_2+ \la u_1 u_2\right)ds = 0.
\end{align*}
By using the decomposition $u_1=u^i_1+u^s_1$ and $u_2=u^i_2+u^s_2$, that the
incident fields solve the Helmholtz equation inside $D$ and that the scattered fields solve the Helmholtz equation outside $D$ as well as the radiation condition, we obtain
\be 
\int_{\partial D}\left( u^s_1 \deri{u^i_2}{\nu} - u^i_2 \deri{u^s_1}{\nu} \right) ds = \int_{\partial D}\left( u^s_2 \deri{u^i_1}{\nu} - u^i_1 \deri{u^s_2}{\nu} \right) ds.
\label{echange}
\ee
Now we use the Green's formula on the boundary $\partial D$ for $u^s(\cdot,\hat{x})$: for $z \in \Omega$ and $\hat{x} \in S^{d-1}$,
\[u^s(z,\hat{x}) = \int_{\partial D}\left( u^s(y,\hat{x})\frac{\partial \Phi(y,z)}{\partial\nu(y)} - \frac{\partial u^s(y,\hat{x})}{\partial\nu}\Phi(y,z) \right) ds(y).\]
By applying equation (\ref{echange}) when $u^i_1$ is the plane wave of direction $\hat{x}$ and $u^i_2$ is the point source $\Phi(\cdot,z)$, it follows that
\[u^s(z,\hat{x}) = \int_{\partial D}\left( w^s(y,z)\frac{\partial e^{ik\hat{x}.y}}{\partial\nu(y)} - \frac{\partial w^s(y,z)}{\partial\nu(y)}e^{ik\hat{x}.y}\right) ds(y).\]
Lastly, from the integral representation (\ref{uinf}) and the above equation we obtain
\[\gamma(d)\,u^s(z,\hat{x}) =w^\infty(-\hat{x},z).\] 
\end{proof}
\\
\noindent The second lemma is a density result and does not require the regularity assumption \ref{AsPbinverse} either. Since it is a slightly more general version of Lemma 4 in \cite{bourgeois_haddar}, the proof is omitted.
\begin{lemma}
\label{density}
Let $u(\cdot,\hat{d})$ denote the solution of (\ref{PbInit}) associated to the incident wave $u^i(x)=e^{ik\hat{d}.x}$ and assume that for some $f \in H^{-1}(\partial D)$,
\[<u(\cdot,\hat{d}),f>_{H^1(\partial D),H^{-1}(\partial D)}=0,\quad \forall \hat{d} \in S^{d-1}.\]
Then $f=0$.
\end{lemma}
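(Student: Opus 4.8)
The plan is to prove the equivalent completeness statement that the traces on $\partial D$ of the total fields $\{u(\cdot,\hat{d}) : \hat{d}\in S^{d-1}\}$ span a dense subspace of $H^1(\partial D)$, by associating to the annihilator $f$ a radiating field whose far field is controlled by the hypothesis. Concretely, for $z\in\Omega$ I would introduce the point-source total field $w(\cdot,z)=\Phi(\cdot,z)+w^s(\cdot,z)$ solving (\ref{PbInit}) with incident wave $\Phi(\cdot,z)$, and define the potential
\[
v(z):=\langle w(\cdot,z),f\rangle_{H^1(\partial D),H^{-1}(\partial D)},\qquad z\in\Omega .
\]
Because $w(\cdot,z)$ depends analytically on the source location and is radiating in $z$ (by the symmetry $w(y,z)=w(z,y)$ of the Green function of the exterior GIBC problem), one checks, after justifying differentiation under the duality bracket, that $v$ is itself a radiating solution of the Helmholtz equation in $\Omega$.

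The first key step is to compute the far field of $v$. Writing $w(y,z)=w(z,y)$ and letting $|z|\to\infty$ in the direction $\hat{z}$, the asymptotics of $\Phi(\cdot,y)$ together with the mixed reciprocity relation of Lemma \ref{reciprocity} give that the far-field pattern of $w(\cdot,z)$ in the direction $\hat{z}$ equals $\gamma(d)\,u(y,-\hat{z})$, the \emph{total} field; summing the incident and scattered contributions is precisely what turns $\Phi^{\infty}+w^{\infty}$ into $u^i+u^s=u$. Passing the far-field limit through the bracket then yields
\[
v^{\infty}(\hat{z})=\gamma(d)\,\langle u(\cdot,-\hat{z}),f\rangle_{H^1(\partial D),H^{-1}(\partial D)}=0\qquad\forall\,\hat{z}\in S^{d-1},
\]
by the assumption on $f$. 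By Rellich's lemma and unique continuation, $v\equiv 0$ in $\Omega$; in particular both Cauchy data $v|_{\partial D}$ and $\partial_\nu v|_{\partial D}$ vanish from the exterior.

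It remains to convert $v\equiv0$ into $f=0$, and this is the step I expect to be the most delicate. The idea is to read off $f$ from the behaviour of $v$ across $\partial D$: decomposing $v=\langle\Phi(\cdot,z),f\rangle+\langle w^s(\cdot,z),f\rangle$, the second term extends across $\partial D$ with continuous Cauchy data (since $w^s(y,z)=w(y,z)-\Phi(y,z)$ stays regular as $y\to z$), whereas the first term is a (generalized) single-layer potential that is continuous across $\partial D$ but whose normal derivative jumps by $f$. Combining the vanishing exterior Cauchy data with the jump relations identifies the interior Cauchy data of $v$ and forces $f=0$. The main obstacles here are the low regularity $f\in H^{-1}(\partial D)$ (so that $\langle\Phi(\cdot,z),f\rangle$ must be interpreted and its jump relations established at that regularity), the rigorous justification of differentiating and of taking the far-field limit under the duality pairing, and the usual interior-Dirichlet-eigenvalue ambiguity in the concluding trace argument; as indicated in the text, these points are handled by the same arguments as in Lemma 4 of \cite{bourgeois_haddar}.
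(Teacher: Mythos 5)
Your steps 1--3 (the potential $v(z)=\langle w(\cdot,z),f\rangle$, its far field via the symmetry $w^s(y,z)=w^s(z,y)$ and the mixed reciprocity of Lemma \ref{reciprocity}, then Rellich to get $v\equiv 0$ in $\Omega$) are workable, modulo the technical justifications you flag (note in particular that since $f\in H^{-1}(\partial D)$, passing the far-field limit through the bracket needs $|z|^{(d-1)/2}e^{-ik|z|}\,w(\cdot,z)\to\gamma(d)\,u(\cdot,-\hat z)$ in the $H^{1}(\partial D)$-norm, i.e.\ convergence of surface gradients as well). The genuine gap is your final step. The claim that $z\mapsto\langle w^s(\cdot,z),f\rangle$ ``extends across $\partial D$ with continuous Cauchy data'' is not justified by the regularity of $w^s(y,z)$ as $y\to z$: what you need is behaviour as the \emph{source point} $z$ approaches and crosses $\partial D$, and $w^s(\cdot,z)$ is only defined for $z\in\Omega$; worse, its trace blows up in $H^{1}(\partial D)$ as $z\to\partial D$ because the incident datum $\Phi(\cdot,z)|_{\partial D}$ does. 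So the jump-relation bookkeeping for $v=\langle\Phi(\cdot,z),f\rangle+\langle w^s(\cdot,z),f\rangle$ cannot even be set up, and, as you yourself concede, the interior-Dirichlet-eigenvalue ambiguity would block the conclusion anyway --- you offer no remedy for it.

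The intended argument (the proof of Lemma 4 in \cite{bourgeois_haddar}, to which the paper defers) avoids all of this by introducing the \emph{dual exterior problem}: let $P$ be the radiating solution of the Helmholtz equation in $\Omega$ with $\partial P/\partial\nu+{\rm div}_{\Gamma}(\mu\nabla_{\Gamma}P)+\lambda P=f$ on $\partial D$, which is well posed for $f\in H^{-1}(\partial D)$ by the same variational argument as Theorem \ref{ThDirect} (the right-hand side pairs with $H^{1}(\partial D)$ test traces). Using (\ref{bypart}) and the GIBC satisfied by $u(\cdot,\hat d)$, the symmetric boundary bilinear form collapses the hypothesis to $\int_{\partial D}\bigl(u\,\partial_\nu P-P\,\partial_\nu u\bigr)\,ds=0$; the $u^s$-part vanishes by Green's theorem and the radiation condition (both fields radiate), and the $u^i$-part is, by the representation (\ref{uinf}), a nonzero constant times $P^{\infty}(-\hat d)$. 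Hence $P^{\infty}\equiv 0$, Rellich and unique continuation give $P\equiv 0$ in $\Omega$, and $f=0$ follows \emph{directly from the boundary condition of} $P$ --- no jump relations, no eigenvalue caveat, no point sources. Indeed, the same pairing applied to $w(\cdot,z)$ shows $v(z)=\pm P(z)$, so your construction is an equivalent but harder detour: once $P$ exists, plane waves alone already yield the conclusion, and the correct way to close your argument from $v\equiv 0$ is precisely this identification, not the layer-potential jump analysis.
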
 
\\
\noindent We are now in a position to prove Theorem \ref{1}. 
\begin{proof}[Proof of Theorem \ref{1}] The first step of the proof consists in proving that $D_1=D_2$, following the method of \cite{kirsch_kress,isakov}.
Let us denote $\tilde{\Omega}$ the unbounded connected component of $\mathbb{R}^d \setminus \overline{D_1 \cup D_2}$. From Rellich's lemma and unique continuation,
we obtain that
\be u_1^s(z,\hat{d})=u_2^s(z,\hat{d}),\qquad \forall z \in \tilde{\Omega},\,\forall \hat{d} \in S^{d-1}.\label{rellich}\ee
Using the mixed reciprocity relation of  Lemma \ref{reciprocity}, we get
\[u_1^\infty(-\hat{d},z)=u_2^\infty(-\hat{d},z), \qquad \forall \hat{d} \in S^{d-1},\,\forall z \in \tilde{\Omega},\] 
where $u_1^\infty(\cdot,z)$ and $u_2^\infty(\cdot,z)$ are the far fields associated to the incident field $\Phi(\cdot,z)$ with $z \in \tilde{\Omega}$.
By using again Rellich's lemma and unique continuation, it follows that
\be u_1^s(x,z)=u_2^s(x,z), \qquad \forall (x,z) \in \tilde{\Omega} \times \tilde{\Omega}.\label{12}\ee 
Assume that $D_1 \not\subset D_2$. Since $\mathbb{R}^d \setminus \overline{D_2}$ is connected, there exists some non empty open set $\Gamma_* \subset (\partial D_1 \cap \partial \tilde{\Omega})\setminus \overline{D_2}$. 
We now consider some point $x_* \in \Gamma_*$ and the sequence
\[x_n=x_*+\frac{\nu_1(x_*)}{n}.\]
For sufficiently large $n$, $x_n \in \tilde{\Omega}$.
From (\ref{12}), we hence have by denoting
$P_1 v:=\partial v/\partial \nu + {\rm div}_{\Gamma} (\mu_1 \nabla_{\Gamma} v)+ \lambda_1 v$,
\[P_1 u^s_2(\cdot,x_n)=P_1 u^s_1(\cdot,x_n) \quad {\rm on} \quad \Gamma_*.\]
Using boundary condition on $\partial D_1$ for $u_1=u_1^s+\Phi(\cdot,x_n)$, this implies that
\[P_1 u^s_2(\cdot,x_n)=-P_1 \Phi(\cdot,x_n) \quad {\rm on} \quad \Gamma_*.\]
Using assumption \ref{AsPbinverse} and the fact that $u^s_2$ is smooth in the neighborhood of $\Gamma_*$, we obtain 
\begin{align*}&\lim_{n \rightarrow +\infty} P_1 u^s_2(\cdot,x_n)
=\frac{\partial u^s_2}{\partial \nu}(\cdot,x_*) + \mu_1 \Delta_{\Gamma} u^s_2(\cdot,x_*) + \nabla_{\Gamma} \mu_1 \cdot \nabla_{\Gamma} u^s_2(\cdot,x_*)+ \lambda_1 u^s_2(\cdot,x_*)
\end{align*}
in $L^2(\Gamma_*)$. On the other hand, for $x_1 \in \Gamma_* \setminus\{x_*\}$, we have pointwise convergence
\[
\lim_{n \rightarrow +\infty} P_1 \Phi(x_1,x_n)=\frac{\partial \Phi}{\partial \nu}(x_1,x_*) + \mu_1 \Delta_{\Gamma} \Phi(x_1,x_*) + \nabla_{\Gamma} \mu_1 \cdot \nabla_{\Gamma} \Phi(x_1,x_*)+ \lambda_1 \Phi(x_1,x_*). 
\]
We hence obtain that
\be \frac{\partial \Phi}{\partial \nu}(\cdot,x_*) + {\rm div}_{\Gamma} (\mu_1 \nabla_{\Gamma} \Phi)(\cdot,x_*) + \lambda_1 \Phi(\cdot,x_*) \in  L^2(\Gamma_*).\label{l2}\ee
Now we consider some reals $R_*>r_*>0$ such that $\partial D \cap B(x_*,R_*) \subset \Gamma_*$, a function  $\phi \in C_0^\infty(B(x_*,R_*))$ with $\phi=1$ on $\overline{B(x_*,r_*)}$, and $w^s_*:=\phi\, \Phi(\cdot,x_*)$.
The function $w_*^s$ satisfies 
\be \label{aux} \left\{
\begin{aligned}
& \Delta w_*^s + k^2 w_*^s = f  \quad \text{in} \; \Omega_1\\
& \frac{\partial w_*^s}{\partial \nu} + {\rm div}_{\Gamma} (\mu_1 \nabla_{\Gamma} w_*^s)+ \la_1 w_*^s =g  \quad \text{on} \; \partial D_1\\
& \lim \limits_{R \to \infty} \int_{|x|=R}\left|\deri{w_*^s}{r} - ikw_*^s \right|^2ds =0,
\end{aligned}
\right.
\ee
with $\Omega_1=\mathbb{R}^d \setminus \overline{D_1}$ and
\begin{align*}
f=&(\Delta \phi) \Phi(\cdot,x_*) + 2\nabla \phi \cdot \nabla \Phi(\cdot,x_*),\\
g=&\phi\left(\frac{\partial \Phi}{\partial \nu}(\cdot,x_*) + {\rm div}_{\Gamma} (\mu_1 \nabla_{\Gamma} \Phi)(\cdot,x_*) + \lambda_1 \Phi(\cdot,x_*)\right)\\
&+\Phi(\cdot,x_*)\left(\deri{\phi}{\nu}+\nabla_\Gamma \mu_1\cdot \nabla_\Gamma \phi+\mu_1\Delta_\Gamma \phi\right) +2\mu_1 \nabla_\Gamma \Phi(\cdot,x_*)\cdot \nabla_\Gamma \phi.
\end{align*} 
Since $\phi=1$ in the neighborhood of $x_*$ and by using (\ref{l2}), we have $f \in L^2(\Omega_1)$ and $g \in L^2(\partial D_1)$.
With the help of a variational formulation for the auxiliary problem (\ref{aux}) as in \cite{BoChHa11},
we conclude that $w^s_* \in H^1(B_R \setminus \overline{D_1})$, hence $\Phi (\cdot,x_*) \in H^1(\Omega_1 \cap B(x_*,r_*))$.
Since $\partial D$ is $C^2$, we can find a finite cone $C_*$ of apex $x_*$, angle $\theta_*$, radius $r_*$ and axis directed by $\xi_*=\nu_1(x_*)$, such that $C_* \subset \Omega_1 \cap B(x_*,r_*)$.
Hence $\Phi(\cdot,x_*) \in H^1(C_*)$.\\
In the case $d=3$ (the case $d=2$ is similar), we have
\[\nabla \Phi(\cdot,x_*)=-\frac{e^{ik|x-x_*|}}{4\pi|x-x_*|^2}\left(\frac{1}{|x-x_*|}-ik\right)(x-x_*),\]
and by using spherical coordinates $(r,\theta,\phi)$ centered at $x_*$,
\[\int_{C^*}\frac{dx}{|x-x_*|^4}=\int_0^{r_*}\int_0^{\theta_*} \int_0^{2\pi} \frac{r^2\sin\theta \,drd\theta d\phi}{r^4}=+\infty,\]
which contradicts the fact that $\Phi(\cdot,x_*) \in H^1(C_*)$.
Then $D_1 \subset D_2$. We prove the same way that $D_2 \subset D_1$, and then $D_1=D_2=D$.\\
The second step of the proof consists in proving that $(\la_1,\mu_1)=(\la_2,\mu_2)$.
We set $\la:=\la_1-\la_2$ and $\mu:=\mu_1-\mu_2$. 
From equality (\ref{rellich}), the total fields associated with the plane waves of direction $\hat{d}$ satisfy
\[u(x,\hat{d}):=u_1(x,\hat{d})=u_2(x,\hat{d})\qquad  \forall x \in \mathbb{R}^d \setminus \overline{D},\,\forall \hat{d} \in S^{d-1}.\] 
From the boundary condition on $\partial D$ for $u_1$ and $u_2$ it follows that
\[{\rm div}_{\Gamma} (\mu \nabla_{\Gamma} u(\cdot,\hat{d})) + \lambda u(\cdot,\hat{d})=0  \qquad \rm{on} \; \partial D,\qquad \forall \hat{d} \in S^{d-1}.\]
For some $\phi \in H^1(\partial D)$, by using formula (\ref{bypart}) we obtain
\[<u(\cdot,\hat{d}),{\rm div}_{\Gamma} (\mu \nabla_{\Gamma} \phi) + \lambda \phi>_{H^1(\partial D),H^{-1}(\partial D)}=0,\qquad \forall \hat{d} \in S^{d-1}.\]
With the help of Lemma \ref{density}, we obtain that 
\[{\rm div}_{\Gamma} (\mu \nabla_{\Gamma} \phi) + \lambda \phi=0  \qquad {\rm on} \; \partial D,\quad \forall \phi \in H^1(\partial D).\]
Choosing $\phi=1$ in the above equation leads to $\la=0$.
The above equation also implies that
\[\int_{\partial D}\mu|\nabla_{\Gamma} \phi|^2\,ds=0,\quad \forall \phi \in H^1(\partial D).\]
Assume that $\mu(x_0) \neq 0$ for some $x_0 \in \partial D$, then for example $\Re e(\mu)(x_0) > 0$ without loss of generality. Since $\mu$ is continuous there exists $\eps>0$ such that
$\Re e(\mu)(x) > 0$ for all $x \in \partial D \cap B(x_0,\eps)$. Let us choose $\phi$ as a smooth and compactly supported function in 
$\partial D \cap B(x_0,\eps)$. We obtain that
\[\int_{\partial D \cap B(x_0,\eps)}\Re e(\mu)|\nabla_{\Gamma} \phi|^2\,ds=0,\]
and then $\nabla_{\Gamma}\phi=0$ on $\partial D$, that is $\phi$ is a constant on $\partial D$, which is a contradiction. We hence have $\mu=0$ on $\partial D$, which completes the proof.
\end{proof}
\\
\noindent As illustrated by Theorem \ref{1}, if all plane waves are used as
incident fields, then it is possible to retrieve both the obstacle and the
impedances, with reasonable assumptions on the regularity of the unknowns. In
the sequel, we consider an effective method to retrieve both the obstacle and
the impedances based on a standard steepest descent method. To use such method, one
needs to compute the partial derivative of the far field with respect to the
obstacle shape, the impedances being fixed. This is the aim of next section. 
The adopted approach is the one used in \cite{KrePai} for the Neumann boundary
condition and in \cite{haddar_kress} for the classical impedance boundary
condition with constant $\la$. The computation of the partial derivative with respect to the impedances is already known and given in \cite{BoChHa11}.
\section{Differentiation of far field with respect to the obstacle}
Throughout this section, we assume that the boundary of the obstacle and the impedances are smooth, typically $\partial D$ is $C^4$, $\la \in C^2(\partial D)$ and $\mu \in C^3(\partial D)$, which ensures that the solution to problem (\ref{PbBorne}) belongs to $H^4(\Omega_R)$.
In order to compute the partial derivative of the far field associated to the solution of problem (\ref{PbInit}) with respect to the obstacle,
we consider a perturbed obstacle $D_\eps$ and some impedances
$(\la_\eps,\mu_\eps)$ that correspond with the impedances $(\la,\mu)$ 
composed with the mapping  $\partial D_\eps \mapsto \partial D$.\\
More precisely, we consider some mapping $\eps \in C^{1,\infty}(\mathbb{R}^d,\mathbb{R}^d)$ with $C^{1,\infty}:=C^1 \cap W^{1,\infty}$ equipped with the norm $||\eps||:=||\eps||_{W^{1,\infty}(\mathbb{R}^d,\mathbb{R}^d)}$.
From \cite[section 5.2.2]{HenPie}, if we assume that $||\eps||<1$, the mapping $f_\eps:=Id + \eps$ is a $C^1$--diffeomorphism of $\mathbb{R}^d$. 
The perturbed obstacle $D_\eps$ is defined by 
\[\partial D_\eps=\{x + \eps(x),\,x \in \partial D\},\]
while the  impedances $(\la_\eps,\mu_\eps)$ are defined on $\partial D_\eps$ by
\begin{equation}\la_\eps =\la \circ f_\eps^{-1},\quad \mu_\eps =\mu \circ f_\eps^{-1}.\label{transport}\end{equation}
We now define the partial derivative of the far field with respect to the
obstacle shape.
\begin{definition}
\label{frechet}
We say that the far field operator
$T : (\lambda,\mu,\partial{D}) \rightarrow u^{\infty}$
is differentiable with respect to $\partial D$ if there exists a continuous linear operator $T'_{\lambda,\mu}(\partial D) : C^{1,\infty}(\mathbb{R}^d,\mathbb{R}^d) \rightarrow L^2 (S^{d-1})$
and a function $o(||\eps||): C^{1,\infty}(\mathbb{R}^d,\mathbb{R}^d) \rightarrow L^2 (S^{d-1})$ such that 
\[T(\la_\eps,\mu_\eps,\partial D_\eps)-T(\la,\mu,\partial D)=T_{\lambda,\mu}'(\partial D)\cdot\eps + o(||\eps||),\]
where $\la_\eps$ and $\mu_\eps$ are defined by (\ref{transport}) and
$\lim_{||\eps|| \rightarrow 0} o(||\eps||)/||\eps||=0$
in $L^2 (S^{d-1})$.
\end{definition}
\begin{remark}
Note that if $\la$ and $\mu$ are constants, the above definition coincides with the classical notion of Fr\'echet differentiability with respect to an obstacle. 
\end{remark}
\\
\noindent
Now we denote by $u_\eps$ the solution of problem 
(\ref{PbInit}) with obstacle $D_\eps$ instead of obstacle $D$ and impedances $(\la_\eps,\mu_\eps)$ instead of impedances $(\la,\mu)$.
We assume in addition that $\overline{D} \subset D_\eps$.
We then have the following integral representation for $u_\eps^s-u^s$:
\begin{lemma}
\label{int1}
For $x \in \mathbb{R}^d \setminus \overline{D}_\eps$,
\[u_\eps^s(x)- u^s(x)=\int_{\partial D_\eps} u_\eps \left\{\frac{\partial w}{\partial \nu_\eps}(\cdot,x) + {\rm div}_{\Gamma_\eps} (\mu_\eps \nabla_{\Gamma_\eps} w)(\cdot,x) + \lambda_\eps w(\cdot,x)\right\}ds_\eps,\]
where $w(\cdot,x)$ is the solution of problem (\ref{PbInit}) with incident wave $\Phi(\cdot,x)$.
\end{lemma}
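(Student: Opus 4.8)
The plan is to build the representation out of the classical Green representation formula for the two radiating scattered fields, to rewrite both as integrals over the \emph{common} surface $\partial D_\eps$, and then to upgrade the fundamental solution $\Phi(\cdot,x)$ to the full field $w(\cdot,x)$ so that the generalized impedance condition can be brought into play. Throughout, $x$ lies outside $\overline{D_\eps}$, hence outside $\overline D$, and I abbreviate $P_\eps v := \partial v/\partial\nu_\eps + {\rm div}_{\Gamma_\eps}(\mu_\eps\nabla_{\Gamma_\eps}v)+\lambda_\eps v$.

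First I would write the Green representation for $u^s_\eps$, which is radiating and solves the Helmholtz equation in $\mathbb{R}^d\setminus\overline{D_\eps}$, as an integral over $\partial D_\eps$ against $\Phi(x,\cdot)$, exactly in the form (\ref{uinf}). The field $u^s$ has its natural representation on $\partial D$, so I would transport that formula to $\partial D_\eps$ by applying Green's second identity to $u^s$ and $\Phi(x,\cdot)$ on the annular region $D_\eps\setminus\overline D$: both solve the Helmholtz equation there (recall $x\notin\overline{D_\eps}$), the bulk term vanishes, the contribution on $\partial D$ reproduces $u^s(x)$, and the contribution on $\partial D_\eps$ is the integral I want. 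The decisive manipulation is then to replace $\Phi$ by $w=\Phi(\cdot,x)+w^s(\cdot,x)$: since $w^s(\cdot,x)$ and each scattered field are radiating solutions, the null identity $\int_{\partial D_\eps}(v^s\,\partial_{\nu_\eps}w^s - w^s\,\partial_{\nu_\eps}v^s)\,ds_\eps=0$ holds for $v^s\in\{u^s_\eps,u^s\}$ (the surface term at infinity drops out by the radiation condition), and adding it promotes $\Phi$ to $w$ in each representation.

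Subtracting the two resulting formulas and using $u^s_\eps-u^s=u_\eps-u$, I obtain
\[u^s_\eps(x)-u^s(x)=\int_{\partial D_\eps}\Big((u_\eps-u)\,\partial_{\nu_\eps}w-w\,\partial_{\nu_\eps}(u_\eps-u)\Big)\,ds_\eps,\]
which splits into a $u_\eps$-part $I_\eps$ and a $u$-part $I$. For $I_\eps$ I would substitute the generalized impedance condition satisfied by $u_\eps$ on $\partial D_\eps$, namely $\partial_{\nu_\eps}u_\eps=-{\rm div}_{\Gamma_\eps}(\mu_\eps\nabla_{\Gamma_\eps}u_\eps)-\lambda_\eps u_\eps$, and then integrate by parts twice on the closed surface $\partial D_\eps$ using (\ref{bypart}) to exploit the symmetry $\int_{\partial D_\eps} w\,{\rm div}_{\Gamma_\eps}(\mu_\eps\nabla_{\Gamma_\eps}u_\eps)\,ds_\eps=\int_{\partial D_\eps} u_\eps\,{\rm div}_{\Gamma_\eps}(\mu_\eps\nabla_{\Gamma_\eps}w)\,ds_\eps$. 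This collapses $I_\eps$ to exactly $\int_{\partial D_\eps}u_\eps\,P_\eps w\,ds_\eps$, the claimed right-hand side.

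It then remains to show that the $u$-part vanishes, and this is where I expect the only genuine subtlety. Applying Green's second identity to $u$ and $w$ on the annulus $D_\eps\setminus\overline D$ (both solve the Helmholtz equation there, $w$ being singular only at $x\notin\overline{D_\eps}$) transfers $I$ to an integral over $\partial D$; invoking that \emph{both} $u$ and $w$ satisfy the same generalized impedance condition with coefficients $(\lambda,\mu)$ on $\partial D$, and using once more the symmetry of the surface operator, gives $\int_{\partial D}(u\,\partial_\nu w-w\,\partial_\nu u)\,ds=0$, so $I=0$. The main obstacle is bookkeeping rather than conceptual: one must keep the orientation of the normals consistent across the annulus (outward $\nu_\eps$ on $\partial D_\eps$ against inward $-\nu$ on $\partial D$) and carefully justify each surface integration by parts and each use of Green's identity under the stated smoothness, the key idea being the promotion of $\Phi$ to $w$ that makes the impedance conditions usable.
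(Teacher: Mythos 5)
Your proof is correct and is essentially the paper's: the same ingredients --- the Green representation formula, Green's second identity in the annulus $D_\eps\setminus\overline{D}$, the radiation-condition null identities for pairs of radiating fields, and the symmetry of the surface operator via (\ref{bypart}) exploited on both $\partial D$ and $\partial D_\eps$ --- appear in both arguments, and your vanishing $u$-part $\int_{\partial D}(u\,\partial_\nu w - w\,\partial_\nu u)\,ds=0$ is exactly the identity the paper uses in its second step. The only difference is bookkeeping: the paper organizes the computation as a sequential chain transforming $u^s(x)$ (transporting $u^i$ and $w^s$ across the annulus before invoking the boundary condition of $u_\eps$), whereas you subtract the two promoted representations first and kill the cross term, which is a linear rearrangement of the same identities.
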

\begin{proof}
Let $x \in \mathbb{R}^d \setminus \overline{D_\eps}$.
We recall the representation formula for the scattered field
\be u^s(x)=\int_{\partial D}\left(u\deri{\Phi}{\nu}(\cdot,x)-\deri{u}{\nu}\Phi(\cdot,x)\right)ds.\label{repres}\ee
By using the boundary condition on $\partial D$ in problem (\ref{PbInit}) for functions $u$ and $w(\cdot,x)=w^s(\cdot,x) + \Phi(\cdot,x)$ and by using formula (\ref{bypart}), we obtain 
\[\int_{\partial D}\left(u\deri{\Phi}{\nu}(\cdot,x)-\deri{u}{\nu}\Phi(\cdot,x)\right)ds= -\int_{\partial D}\left(u\deri{w^s}{\nu}(\cdot,x)-\deri{u}{\nu}w^s(\cdot,x)\right)ds.\]
From the two above equations, using  the Green's integral theorem outside $D$ and the radiation condition for $u^s$ and $w^s(\cdot,x)$,
\[u^s(x)=\int_{\partial D}\left(\deri{u^i}{\nu}w^s(\cdot,x)- u^i\deri{w^s}{\nu}(\cdot,x)\right)ds.\]
We now use the Green's integral theorem in $D_\eps \setminus \overline{D}$ for functions $u^i$ and $w^s(\cdot,x)$ and find
\[u^s(x)=\int_{\partial D_\eps}\left(\deri{u^i}{\nu_\eps}w^s(\cdot,x)- u^i\deri{w^s}{\nu_\eps}(\cdot,x)\right)ds_\eps.\]
Using again the Green's integral theorem outside $D_\eps$ and the radiation condition for $u^s_\eps$ and $w^s(\cdot,x)$, 
\[u^s(x)=\int_{\partial D_\eps}\left(\deri{u_\eps}{\nu_\eps}w^s(\cdot,x)- u_\eps \deri{w^s}{\nu_\eps}(\cdot,x)\right)ds_\eps.\]
The boundary condition satisfied by $u_\eps$ on $\partial D_\eps$ implies
\[-u^s(x)=\int_{\partial D_\eps}u_\eps\left(\frac{\partial w^s}{\partial \nu_\eps}(\cdot,x) + {\rm div}_{\Gamma_\eps} (\mu_\eps \nabla_{\Gamma_\eps} w^s)(\cdot,x) + \lambda_\eps w^s(\cdot,x)\right)ds_\eps.\]
Lastly, we use formula (\ref{repres}) for $u_\eps$ and $D_\eps$, as well as the boundary condition of $u_\eps$ on $\partial D_\eps$, and obtain
that for $x \in \mathbb{R}^d \setminus \overline{D_\eps}$,
\[u_\eps^s(x)=\int_{\partial D_\eps}u_\eps\left(\frac{\partial \Phi}{\partial \nu_\eps}(\cdot,x) + {\rm div}_{\Gamma_\eps} (\mu_\eps \nabla_{\Gamma_\eps} \Phi)(\cdot,x) + \lambda_\eps \Phi(\cdot,x)\right)ds.\]
We complete the proof by adding the two last equations, given $w(\cdot,x)=w^s(\cdot,x)+\Phi(\cdot,x)$.
\end{proof}

\noindent Using the continuity of the solution with respect to the shape one
can replace  $u_\eps$ by $u$ in the integral representation of Lemma \ref{int1}
up to  $\mathcal{O}(||\eps||^2)$ terms. In the following, we make use of the
notation
\[J_\eps:=|{\rm det}(\nabla f_\eps)|,\quad J_\eps^\nu:=J_\eps|(\nabla f_\eps)^{-T}\nu|,\quad P_\eps:=(\nabla f_\eps)^{-1}(\nabla f_\eps)^{-T},
\]
where ${\rm det}(B)$ stands for the determinant of matrix $B$, while $B^{-T}$ stands for the transposition of the inverse of invertible matrix $B$.
\begin{lemma}
\label{int2} Let $w$ be as in Lemma \ref{int1}. Then
\[u_\eps^s(x)- u^s(x)=\int_{\partial D_\eps} u \left\{\frac{\partial w}{\partial \nu_\eps}(\cdot,x) + {\rm div}_{\Gamma_\eps} (\mu_\eps \nabla_{\Gamma_\eps} w)(\cdot,x) + \lambda_\eps w(\cdot,x)\right\}ds_\eps + \mathcal{O}(||\eps||^2),\]
uniformly for $x$ in each compact subset $K \subset \mathbb{R}^d \setminus \overline{D}$.
\end{lemma}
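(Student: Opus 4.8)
The plan is to observe that the claimed identity differs from the exact representation of Lemma \ref{int1} only through the quantity
\[
R(x):=\int_{\partial D_\eps}(u_\eps-u)\left\{\frac{\partial w}{\partial \nu_\eps}(\cdot,x) + {\rm div}_{\Gamma_\eps} (\mu_\eps \nabla_{\Gamma_\eps} w)(\cdot,x) + \lambda_\eps w(\cdot,x)\right\}ds_\eps,
\]
so that it suffices to prove $R(x)=\mathcal{O}(||\eps||^2)$ uniformly for $x\in K$. Note first that $u$ is well defined and smooth on $\partial D_\eps$, since the hypothesis $\overline D\subset D_\eps$ places $\partial D_\eps$ in the interior of $\Omega$, where $u$ solves the Helmholtz equation. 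The strategy is then to display \emph{both} factors of the integrand as $\mathcal{O}(||\eps||)$ and to close the estimate by Cauchy--Schwarz; the gain of one power of $||\eps||$ relative to $u_\eps^s-u^s$ comes precisely from the fact that the two factors vanish simultaneously at $\eps=0$.

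For the first factor I would invoke the (Lipschitz) continuity of the solution with respect to the shape. Transporting $u_\eps$ back to the fixed boundary through $f_\eps$ and comparing with $u$ gives $||u_\eps-u||_{L^2(\partial D_\eps)}=\mathcal{O}(||\eps||)$: this relies on the well-posedness bound of Theorem \ref{ThDirect} to control $u_\eps\circ f_\eps-u$ on $\partial D$, supplemented by the smoothness of $u$ near $\partial D$ to handle the extra term $u\circ f_\eps-u$ arising from the displacement $f_\eps-\mathrm{Id}=\eps$. Crucially, this factor does not depend on $x$.

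The heart of the argument is the second factor. Since $w(\cdot,x)$ solves problem (\ref{PbInit}), it satisfies the generalized impedance boundary condition exactly on the \emph{unperturbed} boundary,
\[
\frac{\partial w}{\partial \nu}(\cdot,x) + {\rm div}_{\Gamma} (\mu \nabla_{\Gamma} w)(\cdot,x) + \lambda w(\cdot,x)=0 \quad \text{on } \partial D.
\]
I would transport the perturbed boundary expression $\{\partial w/\partial\nu_\eps+{\rm div}_{\Gamma_\eps}(\mu_\eps\nabla_{\Gamma_\eps}w)+\lambda_\eps w\}$ back to $\partial D$ via $f_\eps$ and expand the geometric quantities $\nu_\eps$, $\nabla_{\Gamma_\eps}$, ${\rm div}_{\Gamma_\eps}$ (equivalently, the factors $J_\eps$, $J_\eps^\nu$, $P_\eps$) to first order in $\eps$. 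Because $\lambda_\eps\circ f_\eps=\lambda$ and $\mu_\eps\circ f_\eps=\mu$ by (\ref{transport}), and because $w$ is smooth near $\partial D$ uniformly for $x\in K$, the zeroth-order term of this expansion is exactly the displayed left-hand side, which is zero; hence the whole expression is $\mathcal{O}(||\eps||)$ in $L^2(\partial D_\eps)$, uniformly in $x\in K$. A Cauchy--Schwarz estimate on $\partial D_\eps$ then yields $|R(x)|=\mathcal{O}(||\eps||^2)$ with the required uniformity.

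I expect the main obstacle to be this second factor, namely the rigorous first-order expansion of the perturbed normal derivative and of the perturbed second-order surface operator under the change of variables $f_\eps$; this is where the differential-geometry identities collected in the Appendix are needed, and where the regularity hypotheses ($\partial D\in C^4$, $\lambda\in C^2$, $\mu\in C^3$, ensuring $w\in H^4$ near $\partial D$ and $\mu_\eps\in C^1$) enter to guarantee that the remainder is genuinely of order $||\eps||$ rather than merely $o(1)$. Finally, the compactness of $K$ together with the uniform smoothness of $w(\cdot,x)=w^s(\cdot,x)+\Phi(\cdot,x)$ for $x\in K$ transfers all the estimates uniformly in $x$.
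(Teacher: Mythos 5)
Your reduction of the lemma to the estimate $R(x)=\mathcal{O}(\|\eps\|^2)$, where
\[
R(x)=\int_{\partial D_\eps}(u_\eps-u)\left\{\frac{\partial w}{\partial \nu_\eps}(\cdot,x) + {\rm div}_{\Gamma_\eps} (\mu_\eps \nabla_{\Gamma_\eps} w)(\cdot,x) + \lambda_\eps w(\cdot,x)\right\}ds_\eps,
\]
is exactly the paper's starting point, and your first factor ($\|u_\eps-u\|=\mathcal{O}(\|\eps\|)$ via shape continuity) is one of its two ingredients. The genuine gap is in your treatment of the second factor. You want the residual $\partial w/\partial\nu_\eps+{\rm div}_{\Gamma_\eps}(\mu_\eps\nabla_{\Gamma_\eps}w)+\lambda_\eps w$ to be an $L^2(\partial D_\eps)$ function of size $\mathcal{O}(\|\eps\|)$, obtained by a first-order expansion of the strong boundary operator. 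But the perturbation lies only in $C^{1,\infty}=C^1\cap W^{1,\infty}$ with $\|\eps\|=\|\eps\|_{W^{1,\infty}}$, so $\partial D_\eps$ is merely a $C^1$ surface: the strong operator ${\rm div}_{\Gamma_\eps}(\mu_\eps\nabla_{\Gamma_\eps}\,\cdot\,)$ involves tangential derivatives of the metric and normal of $\partial D_\eps$ (for instance the mean curvature, through $\Delta_{\Gamma_\eps}w=\Delta w-({\rm div}_{\Gamma_\eps}\nu_\eps)\,\partial_{\nu_\eps}w-\partial^2_{\nu_\eps\nu_\eps}w$), i.e.\ \emph{second} derivatives of $\eps$, which need not exist and in any case are not controlled by $\|\eps\|$. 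So the residual is in general only a distribution in $H^{-1}(\partial D_\eps)$, Cauchy--Schwarz is unavailable, and even formally your expansion would carry $\nabla^2\eps$ terms that are not $\mathcal{O}(\|\eps\|)$. No amount of smoothness of $w$, $\partial D$, $\lambda$, $\mu$ repairs this, since the defect sits in the perturbation field itself; the regularity hypotheses you invoke ($\partial D\in C^4$, $w\in H^4$, etc.) address the wrong factor.

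The paper circumvents precisely this by never expanding the strong operator. Using the duality definition (\ref{bypart}), it rewrites $R(x)$ as three integrals involving only first-order surface derivatives, the $\mu$-term becoming $-\int_{\partial D_\eps}\mu_\eps\nabla_{\Gamma_\eps}(u_\eps-u)\cdot\nabla_{\Gamma_\eps}w\,ds_\eps$; it then pulls each back to $\partial D$ by $f_\eps$, which brings in only $J_\eps^\nu$ and $P_\eps$ --- functions of $\nabla f_\eps$ alone, hence $1+\mathcal{O}(\|\eps\|)$ in $L^\infty$. The cancellation you tried to extract from the strong GIBC on $\partial D_\eps$ is instead obtained from the \emph{weak} GIBC for $w$ on the fixed boundary $\partial D$, tested against $\tilde u_\eps-\hat u_\eps\in H^1(\partial D)$ (with $\tilde u_\eps=u_\eps\circ f_\eps$, $\hat u_\eps=u\circ f_\eps$), which vanishes identically; combining this with $\|\tilde u_\eps-\hat u_\eps\|_{H^1(\partial D)}=\mathcal{O}(\|\eps\|)$ (shape continuity plus smoothness of $u$) yields $R(x)=\mathcal{O}(\|\eps\|^2)$. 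Note that this also shows the first factor must be estimated in $H^1(\partial D)$, not merely $L^2$: after the integration by parts the tangential gradient of the difference appears. Your decomposition and shape-continuity ingredient are correct; the second-factor argument needs to be replaced by this weak-form mechanism.
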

\begin{proof}
We first observe that
\begin{align}\int_{\partial D_\eps}(&u_\eps-u)\left\{\frac{\partial w}{\partial \nu_\eps}(\cdot,x) + {\rm div}_{\Gamma_\eps} (\mu_\eps \nabla_{\Gamma_\eps} w)(\cdot,x) + \lambda_\eps w(\cdot,x)\right\}ds_\eps \nonumber \\
=\int_{\partial D_\eps}(u_\eps-u)&\deri{w}{\nu_\eps}\,ds_\eps-\int_{\partial D_\eps}\mu_\eps \nabla_{\Gamma_\eps}(u_\eps-u)\cdot\nabla_{\Gamma_\eps} w\,ds_\eps+ \int_{\partial D_\eps}\la_\eps (u_\eps-u)w\,ds_\eps. \label{integral}
\end{align}
We shall only consider the second integral on the right hand side of the last
equality. The other two integrals are simpler terms and can be treated in
a similar way (see for instance \cite{BoChHa11report} for a detailed proof).
By denoting $\tilde{u}_\eps=u_\eps \circ f_\eps$, $\hat{u}_\eps=u \circ f_\eps$ and $\hat{w}_\eps=w \circ f_\eps$,
we have
\[\int_{\partial D_\eps}\mu_\eps \nabla_{\Gamma_\eps}(u_\eps-u)\cdot\nabla_{\Gamma_\eps} w\,ds_\eps
=\int_{\partial D_\eps}(\mu \circ f_\eps^{-1}) \nabla_{\Gamma_\eps}((\tilde{u}_\eps-\hat{u}_\eps)\circ f_\eps^{-1})\cdot\nabla_{\Gamma_\eps} (\hat{w}_\eps\, \circ f_\eps^{-1})\,ds_\eps.\]
 For $z \in H^1(\partial D)$, $x\in \partial D$ and $x_\eps=f_\eps(x)$,
\[\nabla_{\Gamma_\eps} (z \circ f_\eps^{-1})(x_\eps)=(\nabla f_\eps(x))^{-T}\nabla_{\Gamma}\,z(x).\]
(See e.g. \cite[proof of Lemma 3.4]{BoChHa11}.) Consequently, the change of variable $x_\eps=f_\eps(x)$ in the integral (see \cite[Proposition 5.4.3]{HenPie}) implies
\[\int_{\partial D_\eps}\mu_\eps \nabla_{\Gamma_\eps}(u_\eps-u)\cdot\nabla_{\Gamma_\eps} w\,ds_\eps
=\int_{\partial D} \mu \nabla_{\Gamma}(\tilde{u}_\eps-\hat{u}_\eps)\cdot P_\eps \cdot \nabla_{\Gamma}\hat{w}_\eps J_\eps^\nu\,ds,\]
and
\begin{align*}\int_{\partial D_\eps}\mu_\eps &\nabla_{\Gamma_\eps}(u_\eps-u)\cdot\nabla_{\Gamma_\eps} w\,ds_\eps-
\int_{\partial D}\mu \nabla_{\Gamma}(\tilde{u}_\eps-\hat{u}_\eps)\cdot\nabla_{\Gamma} w\,ds\\
&=\int_{\partial D}\mu\nabla_{\Gamma}(\tilde{u}_\eps-\hat{u}_\eps)\cdot(J^\nu_\eps P_\eps\cdot\nabla_{\Gamma} \hat{w}_\eps-\nabla_{\Gamma}w)\,ds.
\end{align*}
Then
\begin{align*}\left|\int_{\partial D_\eps}\mu_\eps \nabla_{\Gamma_\eps}\right.&\left.(u_\eps-u)\cdot\nabla_{\Gamma_\eps} w\,ds_\eps-
\int_{\partial D}\mu \nabla_{\Gamma}(\tilde{u}_\eps-\hat{u}_\eps)\cdot\nabla_{\Gamma} w\,ds\right|\\
&\leq ||\mu||_{L^\infty(\partial D)}||\tilde{u}_\eps-\hat{u}_\eps||_{H^1(\partial D)}V_\eps(x)
\end{align*}
with
\[V_\eps(x)=||(J^\nu_\eps P_\eps\cdot\nabla_{\Gamma} (w \circ f_\eps)-\nabla_{\Gamma} w)(\cdot,x)||_{L^2(\partial D)}.\]
By using the fact that 
\[J_\eps^\nu=1+\mathcal{O}(||\eps||),\quad P_\eps=Id(1+\mathcal{O}(||\eps||)),\]
we conclude that
$V_\eps(x)=\mathcal{O}(||\eps||)$,
uniformly for $x$ in some compact subset $K \subset \mathbb{R}^d \setminus \overline{D}$.\\
On the other hand,
\[||\tilde{u}_\eps-\hat{u}_\eps||_{H^1(\partial D)} \leq ||\tilde{u}_\eps-u||_{H^1(\partial D)} + ||\hat{u}_\eps-u||_{H^1(\partial D)}.\]
We have 
\[||\tilde{u}_\eps-u||_{H^1(\partial D)}=\mathcal{O}(||\eps||),\quad ||\hat{u}_\eps-u||_{H^1(\partial D)}= \mathcal{O}(||\eps||).\]
The first estimate is a consequence of Theorem 3.1 in \cite{BoChHa11}, that is continuity of the solution of problem (\ref{PbBorne}) with respect to the scatterer $D$. The second one comes from the fact that $\hat{u}_\eps=u \circ f_\eps$.\\
We hence conclude that 
\[\int_{\partial D_\eps}\mu_\eps \nabla_{\Gamma_\eps}(u_\eps-u)\cdot\nabla_{\Gamma_\eps} w(\cdot,x)\,ds_\eps-
\int_{\partial D}\mu \nabla_{\Gamma}(\tilde{u}_\eps-\hat{u}_\eps)\cdot\nabla_{\Gamma} w(\cdot,x)\,ds
=\mathcal{O}(||\eps||^2),
\]
uniformly for $x$ in some compact subset $K \subset \mathbb{R}^d \setminus \overline{D}$, and we treat the other two integrals of (\ref{integral}) similarly.
We remark that due to boundary condition satisfied by $w(\cdot,x)$ on $\partial D$, we have
\begin{align*}&0=\int_{\partial D}(\tilde{u}_\eps-\hat{u}_\eps)\deri{w}{\nu}\,ds-\int_{\partial D}\mu \nabla_{\Gamma}(\tilde{u}_\eps-\hat{u}_\eps)\cdot\nabla_{\Gamma} w\,ds+ \int_{\partial D}\la(\tilde{u}_\eps-\hat{u}_\eps)w\,ds\\
=\int_{\partial D_\eps}&(u_\eps-u)\frac{\partial w}{\partial \nu_\eps}\,ds_\eps- \int_{\partial D_\eps}\mu_\eps \nabla_{\Gamma_\eps}(u_\eps-u)\cdot \nabla_{\Gamma_\eps} w\,ds_\eps + \int_{\partial D_\eps}\lambda_\eps(u_\eps-u)w\,ds_\eps+
\mathcal{O}(||\eps||^2),\end{align*}
and conclude that
\[\int_{\partial D_\eps}(u_\eps-u)\left\{\frac{\partial w}{\partial \nu_\eps}(\cdot,x) + {\rm div}_{\Gamma_\eps} (\mu_\eps \nabla_{\Gamma_\eps} w)(\cdot,x) + \lambda_\eps w(\cdot,x)\right\}ds_\eps =
\mathcal{O}(||\eps||^2),\]
which, combined with Lemma \ref{int1}, gives the desired result.
\end{proof}

\noindent The next step is to transform the surface representation of Lemma
\ref{int2} into a representation that uses $\partial D$ instead of $\partial
D_\eps$ by using the divergence theorem. We therefore need to extend the definitions of some surface quantifies, essentially the outward normal $\nu$ on $\partial D$ and the surface gradient $\nabla_{\Gamma}$ inside the volumetric domain $D_\eps \setminus \overline{D}$. 
In this view, for $x_0 \in \partial D$, by definition of a domain of class $C^1$ there exist a function $\phi$ of class $C^1$ and two open sets $U \subset \mathbb{R}^{d-1}$ and $V \subset \mathbb{R}^d$
which are neighborhood of $0$ and $x_0$ respectively, such that $\phi(0)=x_0$ and 
\[\partial D \cap V=\{\phi(\xi)\,;\xi \in U\}.\]
Defining now for $t \in [0,1]$,
\[f_t:= Id + t\eps,\quad \phi_t:=f_t \circ \phi,\]
$\phi_t$ is a parametrization of $\partial D_t=(Id + t\eps)(\partial D)$,
and hence the tangential vectors of $\partial D_t$ at $x^t_0=f_t(x_0)$ are
\be e_j^t=\deri{\phi_t}{\xi_j}=(Id + t \nabla \eps)\deri{\phi}{\xi_j}=(Id + t \nabla \eps)e_j,\quad {\rm for}\quad j=1,d-1.\label{tangent}\ee
We define the covariant basis $(e^{i}_t)$ of $\partial D_t$ at point $x^t_0$ (see for example \cite[section 2.5]{book-nedelec}) by
\be e^{i}_t\cdot e_j^t=\delta_j^i,\quad {\rm for}\quad i,j=1,d-1. \label{covariant}\ee
With these definitions, the outward normal of $\partial D_t$ at point $x^t_0$ is given by
\[\nu_t=\frac{e_1^t \times e_2^t}{|e_1^t \times e_2^t|},\]
while the tangential gradient of function $w \in H^1(\partial D_t)$ is given, denoting $\tilde{w}_t=w\,\circ\, \phi_t$, by
\be \nabla_{\Gamma_t}w(x^t_0)=\sum_{i=1}^{d-1}\deri{\tilde{w}_t}{\xi_i}(0)e^{i}_t.\label{surface}\ee
It is hence possible to consider in domain $D_\eps \setminus \overline{D}$ an extended outward normal $\nu_t$ 
and an extended tangential gradient $\nabla_{\Gamma_t}w$ by using parametrization $(\xi_i,t)$ for $i=1,d-1$.
In the same spirit, the impedances $(\la,\mu)$ are extended to $(\la_t,\mu_t)$, by
\[\la_t:=\la \circ f_t^{-1},\quad \mu_t:=\mu \circ f_t^{-1}.\]
We are now in a position to transform the integral representation of $u_\eps^s-
u^s$  in Lemma \ref{int2} into an integral representation that uses $\partial D$.
We have the following proposition.
\begin{lemma}
\label{int3}
Let $w$ be as in Lemma \ref{int1}. Then, \begin{align*}&u_\eps^s(x)- u^s(x)\\
=\int_{\partial D} (\eps\cdot\nu)\,{\rm div}\left(\right.&\left.-\mu_t \nabla_{\Gamma_t}u\cdot\nabla_{\Gamma_t} w(\cdot,x) \nu_t  + u \nabla w(\cdot,x) + \la_t uw(\cdot,x) \nu_t\right)|_{t=0}ds + \mathcal{O}(||\eps||^2),\end{align*}
uniformly for $x$ in some compact subset $K \subset \mathbb{R}^d \setminus \overline{D}$.
\end{lemma}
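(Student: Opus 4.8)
The plan is to start from the surface representation of Lemma \ref{int2} and rewrite its integrand as a genuine \emph{flux} on $\partial D_\eps$, so that the divergence theorem can be invoked in the thin shell $D_\eps \setminus \overline{D}$. First I would apply the integration-by-parts formula (\ref{bypart}) on the \emph{closed} surface $\partial D_\eps$ (so that no boundary terms appear) to convert the surface-divergence term,
\[
\int_{\partial D_\eps} u\,{\rm div}_{\Gamma_\eps}(\mu_\eps \nabla_{\Gamma_\eps} w)\,ds_\eps = -\int_{\partial D_\eps}\mu_\eps \nabla_{\Gamma_\eps} u\cdot\nabla_{\Gamma_\eps} w\,ds_\eps.
\]
Introducing the volumetric vector field built from the extended quantities $\nu_t$, $\nabla_{\Gamma_t}$, $\mu_t$, $\la_t$ defined above,
\[
F:=-\mu_t \nabla_{\Gamma_t} u\cdot\nabla_{\Gamma_t} w(\cdot,x)\,\nu_t + u\,\nabla w(\cdot,x) + \la_t\, u\, w(\cdot,x)\,\nu_t,
\]
one checks that on $\partial D_\eps$ (where $t=1$ and $\nu_t=\nu_\eps$) its normal trace $F\cdot\nu_\eps$ is exactly the integrand of Lemma \ref{int2} after this integration by parts. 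Hence the representation takes the form $u_\eps^s(x)-u^s(x)=\int_{\partial D_\eps} F\cdot\nu_\eps\,ds_\eps + \mathcal{O}(\|\eps\|^2)$.

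Next I would apply the divergence theorem in $D_\eps\setminus\overline{D}$, whose boundary consists of $\partial D_\eps$ (outward normal $\nu_\eps$) and $\partial D$ (outward normal $-\nu$), giving
\[
\int_{\partial D_\eps}F\cdot\nu_\eps\,ds_\eps = \int_{\partial D}F\cdot\nu\,ds + \int_{D_\eps\setminus\overline{D}}{\rm div}\,F\,dx.
\]
The crucial observation is that the inner boundary term vanishes. Indeed, on $\partial D$ we have $t=0$, so $\nu_t=\nu$, $\mu_t=\mu$, $\la_t=\la$, and undoing the integration by parts once more together with the boundary condition satisfied by $w(\cdot,x)$ on $\partial D$ yields
\[
\int_{\partial D}F\cdot\nu\,ds = \int_{\partial D} u\left\{\frac{\partial w}{\partial\nu} + {\rm div}_\Gamma(\mu\nabla_\Gamma w)+\la w\right\}ds = 0.
\]
This is precisely the place where the fact that $w$ solves (\ref{PbInit}) for the \emph{unperturbed} obstacle is essential, and it reduces the representation to the shell volume integral $\int_{D_\eps\setminus\overline{D}}{\rm div}\,F\,dx + \mathcal{O}(\|\eps\|^2)$.

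Finally I would convert this volume integral into a boundary integral over $\partial D$ using the foliation $\phi_t=f_t\circ\phi$ of the shell. Writing $dx = J(\xi,t)\,d\xi\,dt$ and $ds=J_S\,d\xi$ on $\partial D$, a direct computation of the Jacobian of $(\xi,t)\mapsto\phi_t(\xi)$ from the tangent vectors (\ref{tangent}) gives $J(\xi,t)=(\eps\cdot\nu)\,J_S + \mathcal{O}(t\|\eps\|^2)$, since $\det(e_1,\dots,e_{d-1},\eps)=J_S\,(\eps\cdot\nu)$ and the $t$-corrections carry an extra factor of $\nabla\eps$. Substituting this, integrating over $t\in[0,1]$, and Taylor expanding $({\rm div}\,F)(\phi_t(\xi))$ about $t=0$, the leading contribution is exactly $\int_{\partial D}(\eps\cdot\nu)\,({\rm div}\,F)|_{t=0}\,ds$, while every remaining term carries an extra factor $t\|\eps\|$ or $\|\eps\|$ and is therefore $\mathcal{O}(\|\eps\|^2)$, uniformly for $x$ in a compact subset of $\mathbb{R}^d\setminus\overline{D}$.

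I expect this last step to be the main obstacle. One must make the extensions $\nu_t$, $\nabla_{\Gamma_t}$, $\mu_t$, $\la_t$ genuinely $C^1$ throughout the shell so that ${\rm div}\,F$ is well defined --- this is exactly what the smoothness hypotheses $\partial D\in C^4$, $\la\in C^2$, $\mu\in C^3$ are meant to guarantee --- and one must track the Jacobian expansion carefully to certify that every discarded contribution is uniformly $\mathcal{O}(\|\eps\|^2)$. By contrast, the integration-by-parts bookkeeping and the vanishing of the inner boundary term are comparatively routine once the extended vector field $F$ has been correctly identified.
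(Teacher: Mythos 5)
Your proposal is correct and follows essentially the same route as the paper: integrate by parts on $\partial D_\eps$ via (\ref{bypart}) to identify the integrand of Lemma \ref{int2} as the normal trace of the extended field $F$, apply the divergence theorem in the shell $D_\eps\setminus\overline{D}$ (with the $\partial D$ contribution vanishing by the GIBC satisfied by $w(\cdot,x)$ and the symmetry of (\ref{bypart})), and then reduce the volume integral via the foliation $\phi_t=f_t\circ\phi$, whose Jacobian $(\eps\cdot\nu)+t\,\mathcal{O}(\|\eps\|^2)$ and a first-order Taylor expansion in $t$ you compute exactly as the paper does. Your identification of the delicate points (the $C^1$ extension of $\nu_t$, $\nabla_{\Gamma_t}$, $\la_t$, $\mu_t$ through the shell and the uniform control of the discarded terms) also matches where the paper's hypotheses are actually used.
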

\begin{proof}
The proof relies on the Green's integral theorem and on a change of variable.
We have by using the extension of fields as described above and noticing that $\nu^0=\nu$ and $\nu^1=\nu_\eps$,
\begin{align*}
\int_{\partial D_\eps} &\left\{u\frac{\partial w}{\partial \nu_\eps} - \mu_\eps \nabla_{\Gamma_\eps} u \cdot\nabla_{\Gamma_\eps} w + \lambda_\eps uw\right\}ds_\eps 
-\int_{\partial D} \left\{u\frac{\partial w}{\partial \nu} - \mu \nabla_{\Gamma} u\cdot\nabla_{\Gamma} w + \lambda uw\right\}ds\\
&=\int_{D_\eps \setminus \overline{D}} {\rm div}\left\{u \nabla w - \mu_t (\nabla_{\Gamma_t} u\cdot\nabla_{\Gamma_t} w)\, \nu_t+ \lambda_t uw \,\nu_t\right\}dx\\
&=\int_{\partial D}\int_0^1 {\rm div}\left\{u \nabla w - \mu_t (\nabla_{\Gamma_t} u\cdot\nabla_{\Gamma_t} w)\, \nu_t+ \lambda_t uw \,\nu_t\right\}(\eps\cdot\nu)\,dtds + \mathcal{O}(||\eps||^2).\end{align*}
Here we have used the change of variable $(x_{\partial D},t) \rightarrow
x_{\partial D}+t\eps(x_{\partial D})$ for $x_{\partial D} \in \partial D$ and
$t \in [0,1]$ and
the determinant of the associated Jacobian matrix is $(\eps \cdot \nu) + t\, \mathcal{O}(||\eps||^2)$.
Lastly, by using a first order approximation of the integrand as in \cite{KrePai}, we obtain 
\begin{align*}
\int_{\partial D_\eps} &\left\{u\frac{\partial w}{\partial \nu_\eps} - \mu_\eps \nabla_{\Gamma_\eps} u\cdot\nabla_{\Gamma_\eps} w + \lambda_\eps uw\right\}ds_\eps 
-\int_{\partial D} \left\{u\frac{\partial w}{\partial \nu} - \mu \nabla_{\Gamma} u\cdot\nabla_{\Gamma} w + \lambda uw\right\}ds\\
&=\int_{\partial D}(\eps\cdot\nu){\rm div}\left\{u \nabla w - \mu_t (\nabla_{\Gamma_t} u\cdot\nabla_{\Gamma_t}w)\, \nu_t+ \lambda_t uw \,\nu_t\right\}|_{t=0}ds +\mathcal{O}(||\eps||^2).\end{align*}
The proof is completed by combining the boundary condition satisfied by $u$ on $\partial D$, formula (\ref{bypart}) and the result of Lemma \ref{int2}.
\end{proof}

\noindent The remainder of the section consists in expressing the trace of the divergence term.
In order to do that we need the following technical lemma, the proof of which is postponed in an appendix.
\begin{lemma}
\label{lem}
Let $\lambda$ be a $C^1(\partial D)$ function and define $\lambda_t := \lambda \circ
f_t^{-1}$ and let $u$ and $w$ be in $H^4(B_R\setminus \overline{D})$. Then the following
identities hold on $\partial D$,
\begin{align*}(\eps\cdot\nu)(\nabla
  \la_t\cdot\nu_t)|_{t=0}=-(\nabla_{\Gamma}\la\cdot\eps),
\end{align*}
\begin{align*}
({\rm div} \nu_t)|_{t=0}={\rm div}_{\Gamma} \nu,\end{align*}
\begin{multline*}(\eps\cdot\nu)\nabla(\nabla_{\Gamma_t}u\cdot\nabla_{\Gamma_t}w)\cdot\nu_t|_{t=0}=-\eps_{\Gamma}\cdot\nabla_{\Gamma}(\nabla_{\Gamma}u\cdot\nabla_{\Gamma}w)
+ \nabla_{\Gamma}(\eps_{\Gamma}\cdot \nabla_{\Gamma}u+(\nabla u\cdot\nu)(\eps\cdot\nu))\cdot\nabla_{\Gamma}w \\+ \nabla_{\Gamma}u\cdot\nabla_{\Gamma}(\nabla_{\Gamma}w\cdot\eps_{\Gamma}+(\nabla w\cdot\nu)(\eps\cdot\nu))
-\nabla_{\Gamma}u\cdot(\nabla \eps+ (\nabla \eps)^T)\cdot\nabla_{\Gamma}w,
\end{multline*}
where  we have set $\eps_{\Gamma}:=\eps-(\eps\cdot\nu)\nu$.
\end{lemma}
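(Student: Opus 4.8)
The plan is to work in the local parametrization $\phi_t = f_t\circ\phi$ introduced before the statement, and to differentiate the various geometric quantities with respect to $t$ at $t=0$, systematically exploiting the relation $\partial_t f_t = \eps$ and the first-order expansion $e_j^t = (Id + t\nabla\eps)e_j$ from~(\ref{tangent}). The key observation is that the factor $(\eps\cdot\nu)$ multiplying the normal-derivative terms converts a full gradient $\nabla(\cdot)\cdot\nu_t$ evaluated along the moving surface into an intrinsic $t$-derivative: since moving a point $x_0^t = f_t(x_0)$ costs $\partial_t x_0^t = \eps(x_0)$, one has the chain-rule identity $(\eps\cdot\nu)\,\nabla g\cdot\nu_t|_{t=0} = \partial_t\bigl(g\circ\phi_t\bigr)|_{t=0} - \eps_\Gamma\cdot\nabla_\Gamma g$, which isolates the normal part of the motion. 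This is the mechanism that produces the tangential term $\eps_\Gamma$ in every identity.

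For the first identity I would start from $\la_t = \la\circ f_t^{-1}$, so that $\la_t\circ f_t = \la$ is independent of $t$; differentiating in $t$ gives $\partial_t\la_t + \nabla\la_t\cdot\eps = 0$ at $t=0$, hence $\partial_t(\la_t\circ\phi_t)|_{t=0}=0$. Combining this with the chain-rule identity above (applied to $g=\la_t$, where $\partial_t \la_t$ contributes but $\la_0=\la$) yields $(\eps\cdot\nu)(\nabla\la_t\cdot\nu_t)|_{t=0} = -\eps_\Gamma\cdot\nabla_\Gamma\la = -\nabla_\Gamma\la\cdot\eps$, using that $\nabla_\Gamma\la$ is tangential so only $\eps_\Gamma$ survives. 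The second identity, $(\operatorname{div}\nu_t)|_{t=0} = \operatorname{div}_\Gamma\nu$, follows because the extended field $\nu_t$ agrees with the genuine surface normal on each $\partial D_t$; one computes $\operatorname{div}\nu_t$ by summing $e_t^i\cdot\partial_{\xi_i}\nu_t + \nu_t\cdot\partial_t\nu_t/(\eps\cdot\nu)$, and the purely tangential part reproduces the surface divergence $\operatorname{div}_\Gamma\nu$ (the mean curvature) while the normal part vanishes since $|\nu_t|=1$ forces $\nu_t\cdot\partial_t\nu_t=0$.

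The third identity is the genuine obstacle, and I would allocate most of the effort there. The quantity $\nabla_{\Gamma_t}u\cdot\nabla_{\Gamma_t}w$ is a scalar function of $t$ and of the surface coordinates, and its full gradient dotted with $\nu_t$ again splits via the chain-rule identity into an intrinsic $t$-derivative minus a tangential piece $\eps_\Gamma\cdot\nabla_\Gamma(\nabla_\Gamma u\cdot\nabla_\Gamma w)$ — this accounts for the first term on the right. The remaining three terms come from differentiating the two surface gradients in $t$: using~(\ref{surface}) and~(\ref{covariant}), $\nabla_{\Gamma_t}u = \sum_i \partial_{\xi_i}(u\circ\phi_t)\,e_t^i$, so I must differentiate both the coefficients $\partial_{\xi_i}(u\circ\phi_t)$ (which brings in $\partial_t(u\circ\phi_t) = \nabla u\cdot\eps = \eps_\Gamma\cdot\nabla_\Gamma u + (\nabla u\cdot\nu)(\eps\cdot\nu)$, explaining the argument of $\nabla_\Gamma$ in the second and third terms) and the dual basis $e_t^i$. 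Differentiating~(\ref{covariant}) $e_t^i\cdot e_j^t = \delta_j^i$ together with~(\ref{tangent}) shows $\partial_t e_t^i|_{t=0} = -\sum_j (e_i\cdot(\nabla\eps)^T e^j)\,e^j$ restricted tangentially, and it is precisely this variation of the dual basis that produces the symmetric term $-\nabla_\Gamma u\cdot(\nabla\eps + (\nabla\eps)^T)\cdot\nabla_\Gamma w$. The main obstacle is bookkeeping: one must carefully separate the contribution of the moving basis from that of the moving scalar fields, and check that the $(\nabla u\cdot\nu)$ and $(\nabla w\cdot\nu)$ normal-derivative pieces assemble exactly into the full tangential gradients $\nabla_\Gamma\bigl(\eps_\Gamma\cdot\nabla_\Gamma u + (\nabla u\cdot\nu)(\eps\cdot\nu)\bigr)$ rather than leaving residual terms. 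Throughout I would use that $u,w\in H^4$ guarantees enough regularity for all second surface derivatives to be taken classically.
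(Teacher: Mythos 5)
Your plan is correct and follows essentially the same route as the paper's appendix proof: your chain-rule identity $(\eps\cdot\nu)\,\nabla g\cdot\nu_t|_{t=0}=\partial_t(g\circ\phi_t)|_{t=0}-\eps_{\Gamma}\cdot\nabla_{\Gamma}g$ is precisely what the paper obtains (equation (\ref{inter})) by expanding $\nabla g$ in the covariant basis $(f_t^i,f_t^d)$ of the parametrization $(\xi,t)$, with $f^d|_{t=0}=\nu/(\nu\cdot\eps)$ and $f^i|_{t=0}=e^i-\frac{e^i\cdot\eps}{\nu\cdot\eps}\nu$. Your handling of each identity also matches the paper step for step: the $t$-independence of $\la_t\circ\phi_t$, the vanishing normal contributions forced by $|\nu_t|=1$ (which also justify replacing the true dual vectors $f_t^i$ by $e_t^i$ at $t=0$ in the divergence computation), and the differentiation of both the coefficients $\partial_{\xi_i}(u\circ\phi_t)$ and the dual basis, with $\partial_t e^i_t|_{t=0}=-(\nabla\eps)^T e^i$ up to an immaterial normal component.
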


\noindent In order to simplify the presentation we split the computation of the divergence term in Lemma \ref{int3} into two terms that we treat separately.
\begin{proposition} Let $\mu$ be a $C^1(\partial D)$ function and define $\mu_t := \mu \circ
f_t^{-1}$ and let $u$ and $w$ be in $H^4(B_R\setminus \overline{D})$. Then the following
identity holds on $\partial D$,
\label{div2}
\begin{align*}
(\eps\cdot\nu)\,{\rm div}\left(\mu_t\nabla_{\Gamma_t}u\cdot\nabla_{\Gamma_t}w\nu_t\right)|_{t=0}=& \displaystyle
-(\nabla_{\Gamma}\mu\cdot\eps) (\nabla_{\Gamma}u\cdot\nabla_{\Gamma}w)+\mu (\eps\cdot\nu)(\nabla_{\Gamma}u\cdot\nabla_{\Gamma}w)({\rm div}_{\Gamma}\nu)\\
& \displaystyle+ \mu(\eps\cdot\nu) \nabla_{\Gamma}(\nabla u\cdot\nu)\cdot\nabla_{\Gamma}w+\mu(\eps\cdot\nu)\nabla_{\Gamma}u\cdot\nabla_{\Gamma}(\nabla w\cdot\nu)\\
& \displaystyle+ \mu (\nabla u\cdot\nu) \nabla_{\Gamma}(\eps\cdot\nu)\cdot\nabla_{\Gamma}w + \mu (\nabla w\cdot\nu)\nabla_{\Gamma}(\eps\cdot\nu)\cdot\nabla_{\Gamma}u\\
 & \displaystyle-2\mu(\eps\cdot\nu)(\nabla_{\Gamma}u\cdot\nabla_{\Gamma}\nu\cdot\nabla_{\Gamma}w).
\end{align*}
\end{proposition}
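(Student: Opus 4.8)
The plan is to compute the trace of the divergence appearing in Lemma~\ref{int3} directly by the product rule and then to feed in the pointwise identities supplied by Lemma~\ref{lem}. Writing $\psi_t:=\nabla_{\Gamma_t}u\cdot\nabla_{\Gamma_t}w$, the field whose divergence is required is the scalar $\mu_t\psi_t$ times the vector $\nu_t$, so
\[
{\rm div}\!\left(\mu_t\psi_t\,\nu_t\right)=(\nabla\mu_t\cdot\nu_t)\,\psi_t+\mu_t\,(\nabla\psi_t\cdot\nu_t)+\mu_t\psi_t\,{\rm div}\,\nu_t.
\]
First I would multiply by $(\eps\cdot\nu)$ and evaluate at $t=0$. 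The last term is immediate: since $\psi_0=\nabla_\Gamma u\cdot\nabla_\Gamma w$ and $({\rm div}\,\nu_t)|_{t=0}={\rm div}_\Gamma\nu$ by the second identity of Lemma~\ref{lem}, it contributes $\mu(\eps\cdot\nu)(\nabla_\Gamma u\cdot\nabla_\Gamma w)({\rm div}_\Gamma\nu)$, which is the second term of the claim. The first term is handled by the first identity of Lemma~\ref{lem} applied with $\mu$ in place of $\la$, namely $(\eps\cdot\nu)(\nabla\mu_t\cdot\nu_t)|_{t=0}=-(\nabla_\Gamma\mu\cdot\eps)$, producing $-(\nabla_\Gamma\mu\cdot\eps)(\nabla_\Gamma u\cdot\nabla_\Gamma w)$, the first term of the claim.

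The crux is the middle contribution $\mu\,(\eps\cdot\nu)(\nabla\psi_t\cdot\nu_t)|_{t=0}$, for which I would substitute the third and most delicate identity of Lemma~\ref{lem}. Applying the Leibniz rule to the products $(\nabla u\cdot\nu)(\eps\cdot\nu)$ and $(\nabla w\cdot\nu)(\eps\cdot\nu)$ inside the two middle summands of that identity splits them into exactly the four ``mixed'' contributions $\mu(\eps\cdot\nu)\nabla_\Gamma(\nabla u\cdot\nu)\cdot\nabla_\Gamma w$, $\mu(\eps\cdot\nu)\nabla_\Gamma u\cdot\nabla_\Gamma(\nabla w\cdot\nu)$, $\mu(\nabla u\cdot\nu)\nabla_\Gamma(\eps\cdot\nu)\cdot\nabla_\Gamma w$ and $\mu(\nabla w\cdot\nu)\nabla_\Gamma(\eps\cdot\nu)\cdot\nabla_\Gamma u$ that appear in the statement. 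What remains is $\mu$ times the sum of the first summand $-\eps_\Gamma\cdot\nabla_\Gamma(\nabla_\Gamma u\cdot\nabla_\Gamma w)$, the two ``tangential'' pieces $\nabla_\Gamma(\eps_\Gamma\cdot\nabla_\Gamma u)\cdot\nabla_\Gamma w+\nabla_\Gamma u\cdot\nabla_\Gamma(\nabla_\Gamma w\cdot\eps_\Gamma)$, and the last summand $-\nabla_\Gamma u\cdot(\nabla\eps+(\nabla\eps)^T)\cdot\nabla_\Gamma w$; I must show this remainder collapses to the single curvature term $-2\mu(\eps\cdot\nu)(\nabla_\Gamma u\cdot\nabla_\Gamma\nu\cdot\nabla_\Gamma w)$.

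To perform this collapse I would expand every surface gradient of a dot product of tangent fields by $\nabla_\Gamma(X\cdot Y)=(\nabla_\Gamma X)^TY+(\nabla_\Gamma Y)^TX$ and decompose $\eps=\eps_\Gamma+(\eps\cdot\nu)\nu$. In the last summand, contracting $\nabla\eps+(\nabla\eps)^T$ against the tangent fields $\nabla_\Gamma u,\nabla_\Gamma w$ kills the normal direction (because $\nabla_\Gamma u\cdot\nu=\nabla_\Gamma w\cdot\nu=0$) and separates it into a tangential-derivative-of-$\eps_\Gamma$ part, which cancels exactly against the corresponding pieces coming from the two tangential terms, plus the residue $-(\eps\cdot\nu)\big(\nabla_\Gamma u\cdot\nabla_\Gamma\nu\cdot\nabla_\Gamma w+\nabla_\Gamma w\cdot\nabla_\Gamma\nu\cdot\nabla_\Gamma u\big)$, which equals $-2(\eps\cdot\nu)(\nabla_\Gamma u\cdot\nabla_\Gamma\nu\cdot\nabla_\Gamma w)$ by symmetry of the Weingarten map $\nabla_\Gamma\nu$. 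The genuinely second-order leftovers in $u$ and $w$ then pair up into differences of the form $\eps_\Gamma\cdot(\nabla_\Gamma\nabla_\Gamma u)\nabla_\Gamma w-\nabla_\Gamma w\cdot(\nabla_\Gamma\nabla_\Gamma u)\eps_\Gamma$ together with its $u\leftrightarrow w$ analogue, each of which vanishes because the tangential Hessian of a scalar is symmetric when evaluated on two tangent vectors (torsion-freeness of the Levi--Civita connection on $\partial D$). I expect this final bookkeeping to be the main obstacle: one must scrupulously track which contractions are tangential and invoke both symmetries (of the surface Hessian and of the second fundamental form) at the right places, whereas everything preceding it is a mechanical application of the product rule and the identities of Lemma~\ref{lem}.
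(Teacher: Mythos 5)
Your proposal is correct and follows essentially the same route as the paper's own proof: the same product-rule splitting of the divergence, the same substitution of the three identities of Lemma \ref{lem}, and the same final reduction via $\nabla_{\Gamma}\eps-\nabla_{\Gamma}\eps_{\Gamma}=(\eps\cdot\nu)\nabla_{\Gamma}\nu+\nu\otimes\nabla_{\Gamma}(\eps\cdot\nu)$ together with the symmetry of the Weingarten map to produce the term $-2\mu(\eps\cdot\nu)\nabla_{\Gamma}u\cdot\nabla_{\Gamma}\nu\cdot\nabla_{\Gamma}w$. The only cosmetic difference is that you invoke the symmetry of the tangential Hessian explicitly to cancel the residual second-order terms, whereas the paper absorbs this cancellation into its stated product-rule identity for $\eps_{\Gamma}\cdot\nabla_{\Gamma}(\nabla_{\Gamma}u\cdot\nabla_{\Gamma}w)$.
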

\begin{proof}
Using the chain rule,
\begin{multline*}{\rm div}\left(\mu_t\nabla_{\Gamma_t}u\cdot\nabla_{\Gamma_t}w\nu_t\right)|_{t=0}\\
= (\nabla \mu_t\cdot\nu_t)|_{t=0}(\nabla_{\Gamma}u\cdot\nabla_{\Gamma}w)+\mu (\nabla_{\Gamma}u\cdot\nabla_{\Gamma}w)({\rm div \nu_t})|_{t=0}+\mu\nabla (\nabla_{\Gamma_t}u\cdot\nabla_{\Gamma_t}w)\cdot\nu_t|_{t=0}.\end{multline*}
By using Lemma \ref{lem},
we obtain that 
\begin{multline}(\eps\cdot\nu){\rm div}\left(\mu_t\nabla_{\Gamma_t}u\cdot\nabla_{\Gamma_t}w\nu_t\right)|_{t=0}=-(\nabla_{\Gamma}\mu\cdot\eps)(\nabla_{\Gamma}u\cdot\nabla_{\Gamma}w)+\mu (\eps\cdot\nu)(\nabla_{\Gamma}u\cdot\nabla_{\Gamma}w)({\rm div_{\Gamma}\nu})\\
-\mu\,
\eps_{\Gamma}\cdot\nabla_{\Gamma}(\nabla_{\Gamma}u\cdot\nabla_{\Gamma}w)-\mu\nabla_{\Gamma}u\cdot(\nabla \eps+ (\nabla
 \eps)^T)\cdot\nabla_{\Gamma}w \\
 +
\mu\nabla_{\Gamma}(\eps_{\Gamma}\cdot\nabla_{\Gamma}u +(\nabla u\cdot\nu)(\eps\cdot\nu))\cdot\nabla_{\Gamma}w +
 \mu\nabla_{\Gamma}u\cdot\nabla_{\Gamma}(\nabla_{\Gamma}w\cdot\eps_{\Gamma}+(\nabla
 w\cdot\nu)(\eps\cdot\nu)).  \label{toot1}\end{multline}
For a surface vector $a_\Gamma$, we denote by $\nabla_\Gamma a_\Gamma$ the $(d-1) \times (d-1)$ tensor defined by
\[ \nabla_\Gamma a_\Gamma \cdot e_j=\deri{a_\Gamma}{\xi_j},\quad j=1,..,d-1.\]
Using the algebraic identity $\nabla_\Gamma(a_\Gamma\cdot b_\Gamma)=(\nabla_\Gamma a_\Gamma)^T\cdot b_\Gamma+a_\Gamma \cdot\nabla_\Gamma b_\Gamma$, the third line of (\ref{toot1}) can be expressed as
\begin{multline} \mu\nabla_{\Gamma}(\eps_{\Gamma}\cdot\nabla_{\Gamma}u+ (\nabla u\cdot\nu)(\eps\cdot\nu))\cdot\nabla_{\Gamma}w + \mu\nabla_{\Gamma}u\cdot\nabla_{\Gamma}(\nabla_{\Gamma}w\cdot\eps_{\Gamma}+ (\nabla w\cdot\nu)(\eps\cdot\nu))\\
=\mu\,
\eps_{\Gamma}\cdot\nabla_{\Gamma}(\nabla_{\Gamma}u)\cdot\nabla_{\Gamma}w+\mu
\nabla_{\Gamma}u\cdot\nabla_{\Gamma}\eps_{\Gamma}\cdot\nabla_{\Gamma}w +\mu (\eps\cdot\nu)\nabla_{\Gamma}(\nabla u\cdot\nu)\cdot\nabla_{\Gamma}w \\
+ \mu(\nabla u\cdot\nu)\nabla_{\Gamma}(\eps\cdot\nu)\cdot\nabla_{\Gamma}w
+ \mu \nabla_{\Gamma}u\cdot\nabla_{\Gamma}(\nabla_{\Gamma}w)\cdot\eps_{\Gamma}+\mu \nabla_{\Gamma}u\cdot(\nabla_{\Gamma}\eps_{\Gamma})^{T}\cdot\nabla_{\Gamma}w\\
- \mu (\eps\cdot\nu) \nabla_{\Gamma}u\cdot\nabla_{\Gamma}(\nabla w\cdot\nu)+\mu (\nabla w\cdot\nu)\nabla_{\Gamma}u\cdot\nabla_{\Gamma}(\eps\cdot\nu). \label{toot2}\end{multline}
Plugging (\ref{toot2}) into (\ref{toot1}) and using the identity
\[\eps_{\Gamma}\cdot\nabla_{\Gamma}(\nabla_{\Gamma}u\cdot\nabla_{\Gamma}w)=\eps_{\Gamma}\cdot\nabla_{\Gamma}(\nabla_{\Gamma}u)\cdot\nabla_{\Gamma}w+\nabla_{\Gamma}u\cdot\nabla_{\Gamma}(\nabla_{\Gamma}w)\cdot\eps_{\Gamma},\]
one ends up with
\begin{align*}(\eps\cdot\nu){\rm div}&\left(\mu_t\nabla_{\Gamma_t}u\cdot\nabla_{\Gamma_t}w\nu_t\right)|_{t=0}=-(\nabla_{\Gamma}\mu\cdot\eps)(\nabla_{\Gamma}u\cdot\nabla_{\Gamma}w)+\mu (\eps\cdot\nu)(\nabla_{\Gamma}u\cdot\nabla_{\Gamma}w)({\rm div_{\Gamma}\nu})\\
&+ \mu (\eps\cdot\nu)\nabla_{\Gamma}(\nabla u\cdot\nu)\cdot\nabla_{\Gamma}w + \mu (\nabla u\cdot\nu)\nabla_{\Gamma}(\eps\cdot\nu)\cdot\nabla_{\Gamma}w\\
&+ \mu (\eps\cdot\nu) \nabla_{\Gamma}u\cdot\nabla_{\Gamma}(\nabla w\cdot\nu)+\mu (\nabla w\cdot\nu)\nabla_{\Gamma}u\cdot\nabla_{\Gamma}(\eps\cdot\nu)\\
&-\mu \nabla_{\Gamma}u\cdot(\nabla_{\Gamma} \eps+ (\nabla_{\Gamma} \eps)^T)\cdot\nabla_{\Gamma}w  +\mu \nabla_{\Gamma}u\cdot(\nabla_{\Gamma}\eps_{\Gamma}+(\nabla_{\Gamma}\eps_{\Gamma})^{T})\cdot\nabla_{\Gamma}w.\end{align*}
Now we need to evaluate $\nabla_{\Gamma}\eps-\nabla_{\Gamma}\eps_{\Gamma}$. Since $\eps-\eps_{\Gamma}=(\eps\cdot\nu)\nu$, we have
\[\nabla_{\Gamma}\eps-\nabla_{\Gamma}\eps_{\Gamma}=\nabla_{\Gamma}((\eps\cdot\nu)\nu)=(\eps\cdot\nu)\nabla_{\Gamma}\nu +\nu \otimes \nabla_{\Gamma}(\eps\cdot\nu),\]
where for a surface field $a$, we denote by $\nu \otimes \nabla_\Gamma a$ the $d \times d$ tensor $M$ defined by
\[M\cdot e_j = \deri{a}{\xi_j} \nu\quad j=1,..,d-1,\quad M\cdot \nu=0.\] 
This implies in particular
\[\nabla_{\Gamma}u\cdot\nabla_{\Gamma}\eps_{\Gamma}\cdot\nabla_{\Gamma}w-
\nabla_{\Gamma}u\cdot\nabla_{\Gamma}\eps\cdot\nabla_{\Gamma}w=-(\eps\cdot\nu)\nabla_{\Gamma}u\cdot\nabla_{\Gamma}\nu\cdot\nabla_{\Gamma}w.\]
Since the tensor $\nabla_{\Gamma}\nu$ is symmetric (see for example \cite[Theorem 2.5.18]{book-nedelec}), we also obtain
\[\nabla_{\Gamma}u\cdot(\nabla_{\Gamma}\eps_{\Gamma})^T\cdot\nabla_{\Gamma}w-
\nabla_{\Gamma}u\cdot(\nabla_{\Gamma}\eps)^T\cdot\nabla_{\Gamma}w=-(\eps\cdot\nu)\nabla_{\Gamma}u\cdot\nabla_{\Gamma}\nu\cdot\nabla_{\Gamma}w.\]
We finally arrive at 
\begin{align*}&(\eps\cdot\nu){\rm div}\left(\mu_t\nabla_{\Gamma_t}u\cdot\nabla_{\Gamma_t}w\nu_t\right)|_{t=0}\\
=-(\nabla_{\Gamma}\mu&\cdot\eps)(\nabla_{\Gamma}u\cdot\nabla_{\Gamma}w)+\mu (\eps\cdot\nu)(\nabla_{\Gamma}u\cdot\nabla_{\Gamma}w)({\rm div_{\Gamma}\nu})\\
+\mu (\eps\cdot\nu)&\nabla_{\Gamma}(\nabla u\cdot\nu)\cdot\nabla_{\Gamma}w + \mu (\nabla u\cdot\nu)\nabla_{\Gamma}(\eps\cdot\nu)\cdot\nabla_{\Gamma}w\\
+ \mu (\eps\cdot\nu) \nabla_{\Gamma}u\cdot\nabla_{\Gamma}(\nabla w&\cdot\nu)+\mu (\nabla w\cdot\nu)\nabla_{\Gamma}u\cdot\nabla_{\Gamma}(\eps\cdot\nu)
-2\mu(\eps\cdot\nu)\nabla_{\Gamma}u\cdot\nabla_{\Gamma}\nu\cdot\nabla_{\Gamma}w,\end{align*}
which completes the proof.
\end{proof}
\begin{proposition}
\label{div1}
Let $u$ and $w(\cdot, x)$ be as in Lemma \ref{int3}. Then the following
identity holds on $\partial D$,
\begin{multline*}(\eps\cdot\nu)\,{\rm div}\left(u \nabla w(\cdot,x) + \la_t
    uw(\cdot,x) \nu_t\right)|_{t=0}=-(\nabla_{\Gamma}\la\cdot\eps) uw(\cdot,x) \\
+ (\eps\cdot\nu)\left(\nabla_{\Gamma}u\cdot\nabla_{\Gamma}w(\cdot,x)+M_\mu
  u\,M_\mu w(\cdot,x)  -(k^2+\la^2-\la ({\rm
    div}_{\Gamma}\nu)uw(\cdot,x))\right),
\end{multline*}
where we have used the short notation $M_\mu :={\rm div}_{\Gamma}(\mu \nabla_{\Gamma}\cdot )$.
\end{proposition}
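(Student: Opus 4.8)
The plan is to split the divergence in the statement into the $t$-independent piece ${\rm div}(u\,\nabla w)$ and the $t$-dependent piece ${\rm div}(\la_t\,uw\,\nu_t)$, evaluate each on $\partial D$ at $t=0$, and add the results. In the first piece neither $u$ nor $\nabla w(\cdot,x)$ carries any $t$-dependence, so the restriction $|_{t=0}$ is a no-op there; by the product rule and the fact that $w(\cdot,x)$ solves the Helmholtz equation in a neighborhood of $\partial D$ (hence $\Delta w=-k^2w$), one has ${\rm div}(u\,\nabla w)=\nabla u\cdot\nabla w-k^2uw$. I would then decompose both gradients into tangential and normal parts, $\nabla u=\nabla_{\Gamma}u+(\nabla u\cdot\nu)\nu$ and likewise for $w$, giving $\nabla u\cdot\nabla w=\nabla_{\Gamma}u\cdot\nabla_{\Gamma}w+(\nabla u\cdot\nu)(\nabla w\cdot\nu)$.

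The crucial input is the boundary condition of problem (\ref{PbInit}), rewritten as $\nabla u\cdot\nu=-M_\mu u-\la u$ and $\nabla w\cdot\nu=-M_\mu w-\la w$ on $\partial D$. Substituting these into the product of normal derivatives yields $(\nabla u\cdot\nu)(\nabla w\cdot\nu)=M_\mu u\,M_\mu w+\la(w\,M_\mu u+u\,M_\mu w)+\la^2uw$, so that after multiplication by $\eps\cdot\nu$ the first piece reads
\[(\eps\cdot\nu)\big[\nabla_{\Gamma}u\cdot\nabla_{\Gamma}w+M_\mu u\,M_\mu w+\la(w\,M_\mu u+u\,M_\mu w)+(\la^2-k^2)uw\big].\]

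For the second piece I would apply the chain rule, ${\rm div}(\la_t\,uw\,\nu_t)=(\nabla\la_t\cdot\nu_t)uw+\la\,\nabla(uw)\cdot\nu_t+\la uw\,{\rm div}\,\nu_t$, evaluate at $t=0$, and multiply by $\eps\cdot\nu$. The two $t$-derivatives are supplied directly by Lemma \ref{lem}: $(\eps\cdot\nu)(\nabla\la_t\cdot\nu_t)|_{t=0}=-(\nabla_{\Gamma}\la\cdot\eps)$ and $({\rm div}\,\nu_t)|_{t=0}={\rm div}_{\Gamma}\nu$. The middle term is handled once more by the boundary conditions, writing $\nabla(uw)\cdot\nu=(\nabla u\cdot\nu)w+u(\nabla w\cdot\nu)=-(w\,M_\mu u+u\,M_\mu w)-2\la uw$.

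Adding the two pieces, the cross terms $\la(\eps\cdot\nu)(w\,M_\mu u+u\,M_\mu w)$ cancel exactly against their counterparts from the second piece, and the $\la^2uw$ contributions combine as $(\eps\cdot\nu)\la^2uw-2(\eps\cdot\nu)\la^2uw=-(\eps\cdot\nu)\la^2uw$; what remains is precisely the claimed identity. The whole argument is essentially bookkeeping: the only genuine ingredients are the Helmholtz equation for $w$, the GIBC on $\partial D$ (used symmetrically for $u$ and $w$), and the two elementary identities of Lemma \ref{lem}. The one point worth flagging is the cancellation of the $M_\mu$ cross terms — it is forced rather than delicate — so I expect no real obstacle beyond keeping careful track of signs and matching the cross terms correctly before invoking it.
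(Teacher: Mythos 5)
Your proposal is correct and follows essentially the same route as the paper's proof: expand the divergence by the chain rule, use the Helmholtz equation for $w$, decompose gradients into tangential and normal parts, substitute the GIBC $\nabla u\cdot\nu=-M_\mu u-\la u$ (and likewise for $w$), and invoke the two identities of Lemma \ref{lem}; the cancellation of the $\la\,(w\,M_\mu u+u\,M_\mu w)$ cross terms and the combination $\la^2-2\la^2=-\la^2$ occur identically in the paper. Splitting the divergence into the two pieces ${\rm div}(u\nabla w)$ and ${\rm div}(\la_t uw\,\nu_t)$ before adding is only a cosmetic reorganization of the same computation.
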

\begin{proof}
We first observe that
\[{\rm div}(u \nabla w+\la_t uw \nu_t)=\nabla u\cdot\nabla w + u \Delta w + uw(\nabla \la_t\cdot\nu_t)+\la_t w\nabla u\cdot\nu_t + \la_t u\nabla w\cdot\nu_t+ \la_t uw({\rm div}\nu_t).\]
By using the equation $\Delta w+k^2w=0$ outside $D$ and the decomposition of gradient into its normal and tangential parts, we obtain
\begin{align*}{\rm div}(u \nabla w+\la_t uw \nu_t)|_{t=0}=&\nabla_{\Gamma} u\cdot\nabla_{\Gamma} w + (\nabla u\cdot\nu)(\nabla w\cdot\nu) - (k^2-\la ({\rm div} \nu_t)|_{t=0})uw \\
&+ uw (\nabla \la_t\cdot\nu_t)|_{t=0}+\la (\nabla u\cdot\nu)w + \la u(\nabla w\cdot\nu).\end{align*}
We can now replace $\nabla u\cdot\nu$ and $\nabla w\cdot\nu$ by $(-M_\mu u-\la u)$ and $(-M_\mu w-\la w)$ respectively, which leads to
\begin{align*}{\rm div}(u \nabla w+\la_t uw \nu_t)|_{t=0}= &\nabla_{\Gamma} u\cdot\nabla_{\Gamma} w + M_\mu u \,M_\mu w-(k^2+\la^2-\la ({\rm div}\nu_t)|_{t=0})uw\\
&+ uw (\nabla \la_t\cdot\nu_t)|_{t=0}.\end{align*}
The proof follows by using Lemma \ref{lem}.
\end{proof}
\\
\noindent Gathering Propositions \ref{div2} and \ref{div1}, we establish the main theorem of this section, that is 
\begin{theorem}
\label{main}
The discrepancy between the scattered fields due to obstacle $D_\eps$ and the obstacle $D$ is 
\[u_\eps^s(x)- u^s(x)=-\int_{\partial D}B_\eps u(y)w(y,x)\,ds(y)+ \mathcal{O}(||\eps||^2),\]
uniformly for $x$ in some compact subset $K \subset \mathbb{R}^d \setminus \overline{D}$,
where $w(\cdot,x)$ is the solution of problem (\ref{PbInit}) associated with $u^i=\Phi(\cdot,x)$, and the surface operator $B_\eps$ is defined
by
\begin{align*}B_\eps u=(\eps\cdot\nu)(k^2-2H\la)&u+{\rm div}_{\Gamma}\left((Id+ 2\mu (R-H\,Id))(\eps\cdot\nu)\nabla_{\Gamma}u\right)+L_{\la,\mu}\left((\eps\cdot\nu)L_{\la,\mu} u\right)\\
&+(\nabla_{\Gamma}\la\cdot\eps_\Gamma)u + {\rm div}_{\Gamma}\left((\nabla_{\Gamma}\mu\cdot\eps_\Gamma)\nabla_{\Gamma}u\right),\end{align*}
with $2H:={\rm div}_{\Gamma} \nu$,  $R:=\nabla_{\Gamma}\nu$ and $L_{\la,\mu}\, \cdot:={\rm div}_{\Gamma}(\mu \nabla_{\Gamma} \,\cdot)+\la \,\cdot$ . 
\end{theorem}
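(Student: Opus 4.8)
The plan is to feed the two computations of Propositions \ref{div2} and \ref{div1} into the integral representation of Lemma \ref{int3}, and then to integrate by parts over the closed surface $\partial D$ so as to transfer \emph{every} derivative off the factor $w(\cdot,x)$. Once $w$ appears undifferentiated the integrand takes the form $(B_\eps u)\,w$ and the operator $B_\eps$ is simply read off. Concretely, Lemma \ref{int3} gives
\[u_\eps^s(x)-u^s(x)=\int_{\partial D}(\eps\cdot\nu)\,{\rm div}\left(u\nabla w-\mu_t(\nabla_{\Gamma_t}u\cdot\nabla_{\Gamma_t}w)\nu_t+\la_t uw\,\nu_t\right)|_{t=0}\,ds+\mathcal{O}(\|\eps\|^2),\]
so substituting Proposition \ref{div1} for the $u\nabla w+\la_t uw\,\nu_t$ part and (minus) Proposition \ref{div2} for the $\mu_t$ part produces a finite sum of surface terms, and the claim reduces to showing that $\int_{\partial D}(B_\eps u)\,w\,ds$ equals this sum up to $\mathcal{O}(\|\eps\|^2)$.

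The tools I would use throughout are: surface integration by parts in the form (\ref{bypart}) together with $\int_{\partial D}{\rm div}_{\Gamma}V\,ds=0$ for a tangential field $V$ on the closed surface; the resulting self-adjointness of $M_\mu$, hence of $L_{\la,\mu}$; and, crucially, the fact that \emph{both} $u$ and $w(\cdot,x)$ satisfy the GIBC on $\partial D$, which lets me eliminate normal derivatives through $\nabla u\cdot\nu=-L_{\la,\mu}u$ and $\nabla w\cdot\nu=-L_{\la,\mu}w$. The benign blocks then fall out quickly. The $u\Delta w=-k^2uw$ contribution from Proposition \ref{div1} produces the $(\eps\cdot\nu)k^2u$ term, while the mean-curvature piece (via ${\rm div}_{\Gamma}\nu=2H$ and the $(\nabla\la_t\cdot\nu_t)|_{t=0}$ identity of Lemma \ref{lem}) produces $-2H\la$. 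The purely tangential products $\nabla_{\Gamma}u\cdot\nabla_{\Gamma}w$, weighted by $1$ (Proposition \ref{div1}), by $2\mu H$ and by $-2\mu R$ with $R=\nabla_{\Gamma}\nu$ (Proposition \ref{div2}), combine after one integration by parts, using the symmetry of $R$, into ${\rm div}_{\Gamma}\big((Id+2\mu(R-H\,Id))(\eps\cdot\nu)\nabla_{\Gamma}u\big)$. Finally the $(\nabla_{\Gamma}\mu\cdot\eps)$ and $(\nabla_{\Gamma}\la\cdot\eps)$ terms give, after a further integration by parts and the fact that $\nabla_{\Gamma}f\cdot\eps=\nabla_{\Gamma}f\cdot\eps_\Gamma$ for tangential $\nabla_\Gamma f$, the two first-order tangential terms of $B_\eps$.

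The delicate part, which I expect to be the main obstacle, is the recovery of the second-order block $L_{\la,\mu}\big((\eps\cdot\nu)L_{\la,\mu}u\big)$. It must be assembled from exactly three ingredients: the hidden product $(\eps\cdot\nu)(\nabla u\cdot\nu)(\nabla w\cdot\nu)$ contained in Proposition \ref{div1}, together with $\mu(\eps\cdot\nu)\nabla_{\Gamma}(\nabla u\cdot\nu)\cdot\nabla_{\Gamma}w$ and $\mu(\eps\cdot\nu)\nabla_{\Gamma}u\cdot\nabla_{\Gamma}(\nabla w\cdot\nu)$ from Proposition \ref{div2}. After substituting $\nabla u\cdot\nu=-L_{\la,\mu}u$, $\nabla w\cdot\nu=-L_{\la,\mu}w$ and integrating by parts once (using self-adjointness of $M_\mu$) in each term, the first contributes $-L_{\la,\mu}\big((\eps\cdot\nu)L_{\la,\mu}u\big)$ while the other two each contribute $+L_{\la,\mu}\big((\eps\cdot\nu)L_{\la,\mu}u\big)$ modulo lower-order pieces, so the three sum to $+L_{\la,\mu}\big((\eps\cdot\nu)L_{\la,\mu}u\big)$; the signs work out precisely because the boundary condition contributes a minus to each normal derivative. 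The genuine subtlety is bookkeeping the placement of the scalar weight $(\eps\cdot\nu)$: in the raw terms it sits \emph{outside} the surface gradients, whereas in $L_{\la,\mu}\big((\eps\cdot\nu)L_{\la,\mu}u\big)$ it must sit \emph{between} the two operators. The discrepancy generated when commuting $(\eps\cdot\nu)$ across the gradients is exactly absorbed by the two remaining terms $\mu(\nabla u\cdot\nu)\nabla_{\Gamma}(\eps\cdot\nu)\cdot\nabla_{\Gamma}w$ and $\mu(\nabla w\cdot\nu)\nabla_{\Gamma}(\eps\cdot\nu)\cdot\nabla_{\Gamma}u$ of Proposition \ref{div2}, which carry the $\nabla_{\Gamma}(\eps\cdot\nu)$ factors needed to correct these commutators.

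As a safeguard I would check the assembled operator against two degenerate cases: setting $\mu=\la=0$ must reduce $B_\eps u$ to $(\eps\cdot\nu)k^2u+{\rm div}_{\Gamma}((\eps\cdot\nu)\nabla_{\Gamma}u)$, recovering the classical Neumann shape-derivative formula of \cite{KrePai}, while setting $\mu=0$ with $\la$ constant must reproduce the constant-impedance formula of \cite{haddar_kress}. The entire computation is local on $\partial D$ and each step is either an integration by parts, an application of Lemma \ref{lem}, or the elimination of a normal derivative via the GIBC; the only conceptual point beyond routine algebra is precisely that systematic elimination, since it is what forces the second-order operator to reappear in the symmetric sandwich form $L_{\la,\mu}\big((\eps\cdot\nu)L_{\la,\mu}u\big)$.
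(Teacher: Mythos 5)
Your proposal is correct and follows essentially the same route as the paper's proof: feed Propositions \ref{div2} and \ref{div1} into Lemma \ref{int3}, eliminate the normal derivatives via the GIBC satisfied by both $u$ and $w(\cdot,x)$, and integrate by parts on $\partial D$ using the self-adjointness of $L_{\la,\mu}$ and the symmetry of $R$, with the $\nabla_{\Gamma}(\eps\cdot\nu)$ terms of Proposition \ref{div2} absorbing the commutators exactly as in the paper's regrouping of the last three lines into $-\int_{\partial D}(\eps\cdot\nu)(L_{\la,\mu}u)(L_{\la,\mu}w)\,ds$. Your sign bookkeeping for the block $L_{\la,\mu}\bigl((\eps\cdot\nu)L_{\la,\mu}u\bigr)$ (net $-L+L+L=+L$) matches the paper's computation.
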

\begin{proof}
From Propositions \ref{div1} and \ref{div2} it follows that
\begin{align*}&(\eps\cdot\nu)\,{\rm div}\left(-\mu_t \nabla_{\Gamma_t}u\cdot\nabla_{\Gamma_t} w \nu_t + u \nabla w + \la_t uw\nu_t\right)|_{t=0}\\
=(\eps\cdot\nu)\nabla_{\Gamma}u&\cdot\nabla_{\Gamma}w+(\eps\cdot\nu)M_\mu u\,M_\mu w-(\eps\cdot\nu)\left(k^2+\la^2-\la ({\rm div}_{\Gamma}\nu)\right)uw\\
-(\nabla_{\Gamma}\la\cdot\eps)& uw+(\nabla_{\Gamma}\mu\cdot\eps)\nabla_{\Gamma}u\cdot\nabla_{\Gamma}w
-\mu ({\rm div}_{\Gamma}\nu)(\eps\cdot\nu)\nabla_{\Gamma}u\cdot\nabla_{\Gamma}w\\
- \mu&(\eps\cdot\nu)\nabla_{\Gamma}(\nabla u\cdot\nu)\cdot\nabla_{\Gamma}w-\mu(\eps\cdot\nu)\nabla_{\Gamma}u\cdot\nabla_{\Gamma}(\nabla w\cdot\nu)\\
-\mu &(\nabla u\cdot\nu)\nabla_{\Gamma}(\eps\cdot\nu)\cdot\nabla_{\Gamma}w - \mu (\nabla w\cdot\nu)\nabla_{\Gamma}(\eps\cdot\nu)\cdot\nabla_{\Gamma}u\\
&+2\mu(\eps\cdot\nu)(\nabla_{\Gamma}u\cdot\nabla_{\Gamma}\nu\cdot\nabla_{\Gamma}w).\end{align*}
Using the boundary condition for $u$ and $w(\cdot,x)$ on $\partial D$, we obtain
\begin{align*}&(\eps\cdot\nu)\,{\rm div}\left(-\mu_t \nabla_{\Gamma_t}u\cdot\nabla_{\Gamma_t} w \nu_t + u \nabla w + \la_t uw\nu_t\right)|_{t=0}\\
=-(\eps\cdot\nu)&(k^2-2H \la) uw + (\eps\cdot\nu)\nabla_{\Gamma}u\cdot\left(Id +2\mu (R-H\,Id)\right)\cdot\nabla_{\Gamma}w\\
&-(\nabla_{\Gamma}\la\cdot\eps) uw+(\nabla_{\Gamma}\mu\cdot\eps)\nabla_{\Gamma}u\cdot\nabla_{\Gamma}w\\
&+(\eps\cdot\nu)M_\mu u\,M_\mu w-(\eps\cdot\nu)\la^2 uw\\
+ \mu(\eps\cdot\nu)&\nabla_{\Gamma}(M_\mu u+\la u)\cdot\nabla_{\Gamma}w+\mu(\eps\cdot\nu)\nabla_{\Gamma}u\cdot\nabla_{\Gamma}(M_\mu w+\la w)\\
+\mu (M_\mu u&+\la u)\nabla_{\Gamma}(\eps\cdot\nu)\cdot\nabla_{\Gamma}w + \mu (M_\mu w+\la w)\nabla_{\Gamma}(\eps\cdot\nu)\cdot\nabla_{\Gamma}u.\end{align*}
The three last lines of the above expression can be written as
\begin{align*}(\eps\cdot\nu)&(L_{\la,\mu} u)(L_{\la,\mu} w)-\la(\eps\cdot\nu)w(L_{\la,\mu}u)-\la(\eps\cdot\nu)u(L_{\la,\mu}w)\\
&+\mu\nabla_{\Gamma}((\eps\cdot\nu)L_{\la,\mu}u)\cdot\nabla_{\Gamma}w+\mu\nabla_{\Gamma}((\eps\cdot\nu)L_{\la,\mu}w)\cdot\nabla_{\Gamma}u.\end{align*}
The integral over $\partial D$ of the above expression is, after integration by parts and simplification,
\[-\int_{\partial D}(\eps\cdot\nu)(L_{\la,\mu}u)(L_{\la,\mu}w)\,ds.\]
To complete the proof, we simply use Lemma \ref{int3} and integration by parts. 
\end{proof}
\noindent
\begin{corollary}
\label{coldiff}
We assume that $\partial D$, $\la$ and $\mu$ are analytic, and $(\la,\mu)$ satisfy assumption \ref{AsPbdirect}.
Then the far field operator 
$T : \quad (\lambda,\mu,\partial{D}) \rightarrow u^{\infty}$
is differentiable with respect to $\partial D$ according to definition \ref{frechet} and its Fr\'echet derivative is given by
\[T'_{\lambda,\mu}(\partial D)\cdot\eps=v_\eps^\infty,\]
where $v_\eps^\infty$ is the
far field associated with the outgoing solution $v_\eps^s$ of the Helmholtz equation outside $D$ which satisfies the GIBC condition
\[\deri{v_\eps^s}{\nu}+{\rm div}(\mu \nabla_{\Gamma}v_\eps^s)+\la v_\eps^s=B_{\eps}u \quad \text{on} \; \partial D,\]
where $B_\eps u$ is given by Theorem \ref{main}.
\end{corollary}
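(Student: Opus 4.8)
The plan is to realize the boundary integral produced by Theorem \ref{main} as the value of a genuine radiating field, and then to pass from this near-field identity to the far field. First I would introduce $v_\eps^s$ as the unique radiating solution in $V(\Omega)$ of the exterior problem
\[
\Delta v_\eps^s + k^2 v_\eps^s = 0 \text{ in } \Omega, \qquad \deri{v_\eps^s}{\nu}+{\rm div}_\Gamma(\mu\nabla_\Gamma v_\eps^s)+\la v_\eps^s = B_\eps u \text{ on } \partial D,
\]
together with the Sommerfeld radiation condition. Its well-posedness follows from the same variational argument as Theorem \ref{ThDirect} under Assumption \ref{AsPbdirect}, the datum $B_\eps u$ now entering as a source term through a linear functional rather than as zero. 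One must check that $B_\eps u\in H^{-1}(\partial D)$: under the analyticity hypotheses $u$ is smooth up to $\partial D$, and since $\eps\in C^{1,\infty}(\mathbb{R}^d,\mathbb{R}^d)$ one has $\eps\cdot\nu\in W^{1,\infty}\subset H^1$, so each term of $B_\eps u$ in Theorem \ref{main} lies in $H^{-1}(\partial D)$. The map $\eps\mapsto B_\eps u$ is moreover linear and bounded from $C^{1,\infty}(\mathbb{R}^d,\mathbb{R}^d)$ into $H^{-1}(\partial D)$, so that $\eps\mapsto v_\eps^s$ and hence $\eps\mapsto v_\eps^\infty$ are linear and continuous; this ultimately furnishes the continuous linear operator $T'_{\la,\mu}(\partial D)$ demanded by Definition \ref{frechet}.

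The heart of the argument is the integral representation $v_\eps^s(x)=-\int_{\partial D}B_\eps u(y)\,w(y,x)\,ds(y)$, obtained by a Green's identity adapted to the GIBC. Writing $P\phi:=\deri{\phi}{\nu}+{\rm div}_\Gamma(\mu\nabla_\Gamma\phi)+\la\phi$ and applying the integration-by-parts formula (\ref{bypart}), the two second-order surface terms cancel against each other and so do the two $\la$ terms, leaving
\[
\int_{\partial D}\big(w\,Pv_\eps^s - v_\eps^s\,Pw\big)\,ds = \int_{\partial D}\Big(w\deri{v_\eps^s}{\nu}-v_\eps^s\deri{w}{\nu}\Big)\,ds.
\]
Since $Pv_\eps^s=B_\eps u$ and $Pw=0$ on $\partial D$, the left-hand side is $\int_{\partial D}B_\eps u\,w\,ds$. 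For the right-hand side I split $w=w^s+\Phi(\cdot,x)$: the $w^s$ contribution vanishes by Green's theorem in $\Omega$ for the two radiating solutions $v_\eps^s$ and $w^s$, while the $\Phi(\cdot,x)$ contribution equals $-v_\eps^s(x)$ by the representation formula. This yields the claimed identity, so the integral of Theorem \ref{main} is exactly $v_\eps^s(x)$ and therefore
\[
u_\eps^s(x)-u^s(x)=v_\eps^s(x)+\mathcal{O}(\|\eps\|^2)
\]
uniformly for $x$ in compact subsets of $\mathbb{R}^d\setminus\overline{D}$.

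It remains to upgrade this near-field expansion to the far field. I would fix $R$ and a small $\delta>0$ with $\overline{D_\eps}\subset B_{R-\delta}$ and set $r_\eps:=u_\eps^s-u^s-v_\eps^s$, a radiating solution of the Helmholtz equation in the exterior of $\overline{B_{R-\delta}}$. On the annulus $B_{R+\delta}\setminus\overline{B_{R-\delta}}\subset\mathbb{R}^d\setminus\overline{D}$ the uniform bound gives $\sup|r_\eps|=\mathcal{O}(\|\eps\|^2)$, and by interior elliptic (indeed analytic) estimates for Helmholtz solutions this controls the Cauchy data $(r_\eps,\partial r_\eps/\partial\nu)$ on $\partial B_R$ by $\mathcal{O}(\|\eps\|^2)$. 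Feeding these into the far-field representation of the type (\ref{uinf}) over $\partial B_R$ and using boundedness of that integral operator into $L^2(S^{d-1})$, one gets $\|r_\eps^\infty\|_{L^2(S^{d-1})}=\mathcal{O}(\|\eps\|^2)$. Since $u_\eps^\infty-u^\infty=T(\la_\eps,\mu_\eps,\partial D_\eps)-T(\la,\mu,\partial D)$ and the far field of $v_\eps^s$ is $v_\eps^\infty$, this is precisely the expansion of Definition \ref{frechet} with $T'_{\la,\mu}(\partial D)\cdot\eps=v_\eps^\infty$ and remainder $o(\|\eps\|)$, in fact $\mathcal{O}(\|\eps\|^2)$.

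I expect the main obstacle to be this last passage from the pointwise near-field estimate to the $L^2(S^{d-1})$ far-field estimate: Theorem \ref{main} only provides control in the sup norm on compact sets, so one must invoke interior regularity to recover Cauchy data on an enclosing sphere before applying the far-field operator. The Green's identity of the second step is conceptually the crux but routine once one notices that the pairwise cancellations of the surface and $\la$ terms follow from the symmetry of the GIBC bilinear form encoded in (\ref{bypart}).
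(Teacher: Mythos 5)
Your mechanics are sound and match what the paper's one-line proof implicitly invokes: realizing the boundary integral of Theorem \ref{main} as the radiating solution $v_\eps^s$ of the GIBC problem with source $B_\eps u \in H^{-1}(\partial D)$ (well-posed by the same variational argument as Theorem \ref{ThDirect}), the Green's identity with the symmetric cancellations from (\ref{bypart}) giving $v_\eps^s(x)=-\int_{\partial D}B_\eps u\,w(\cdot,x)\,ds$, and the upgrade from the uniform-on-compacts near-field estimate to the $L^2(S^{d-1})$ far-field estimate via interior estimates on an annulus. These steps are correct and are exactly the ``integral representation for the scattered field $v_\eps^s=u^s_\eps-u^s$'' that the paper alludes to.

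However, there is a genuine gap. Theorem \ref{main} is established in the paper under the standing assumption $\overline{D}\subset D_\eps$ (stated just before Lemma \ref{int1}): Lemmas \ref{int1} and \ref{int3} apply Green's theorem in the shell $D_\eps\setminus\overline{D}$, so the asymptotic expansion is proved only for perturbations moving the boundary strictly outward. Definition \ref{frechet} demands the expansion $T(\la_\eps,\mu_\eps,\partial D_\eps)-T(\la,\mu,\partial D)=T'_{\la,\mu}(\partial D)\cdot\eps+o(\|\eps\|)$ for \emph{all} small $\eps\in C^{1,\infty}$ --- in particular for $-\eps$ and for perturbations crossing $\partial D$ --- and your argument never extends Theorem \ref{main} beyond the one-sided case. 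This is precisely the role of the analyticity hypothesis on $\partial D$, $\la$ and $\mu$: as in \cite{haddar_kress}, analyticity permits continuing the solution and the representation across $\partial D$ so that the inclusion assumption can be dropped; indeed the paper's proof consists essentially of this observation together with your representation step. A telltale sign of the gap is that your proof uses analyticity only to assert smoothness of $u$ up to the boundary, which already follows from the $C^4$, $C^2$, $C^3$ regularity assumed throughout Section 4 --- the hypothesis does no work in your argument, which means the two-sided (and hence the genuinely Fr\'echet) part of the statement is not proved.
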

\begin{proof}
Proceeding as in \cite{haddar_kress}, we can drop the assumption $\overline{D} \subset D_\eps$ provided we assume that $\partial D$, $\la$ and $\mu$ be analytic.  
The result then follows from Theorem \ref{main} and an integral representation for the scattered field $v_\eps^s=u^s_\eps-u^s$.
\end{proof}
\begin{remark}
With classical impedance boundary condition, that is $\mu=0$, we retrieve the result of \cite[theorem 2.5]{haddar_kress}.
Let us also remark that in this case the surface operator $B_\eps$ in Theorem \ref{main} is a second-order operator, while it becomes a fourth-order surface operator
when $\mu \neq 0$.
\end{remark}
\begin{remark}
\label{rem1}
Classically, the shape derivative only involves the normal part $(\eps \cdot
\nu)$ of field $\eps$ (see for example \cite[Proposition 5.9.1]{HenPie}). In
view of Theorem \ref{main}, the expression of $B_\eps$ may be split in two
parts: a first part involving only the normal component $(\eps\cdot\nu)$ and a
second part involving only the tangential component 
$\eps_\Gamma$. The presence of this second part is due to the fact that the
impedances $\la$ and $\mu$ are surface functions that depend on $\partial D$!  
\end{remark}
\section{An optimization technique to solve the inverse problem}
\label{optim}
This section is dedicated to the effective reconstruction of both the obstacle $\partial D$ and the functional impedances $(\la,\mu)$ from the observed far fields $u^\infty_{{\rm obs},j}:= T_j(\la_0,\mu_0,\partial D_0) \in L^2(S^{d-1})$ associated with $N$ given plane wave directions, where $T_j$ refers to incident direction $\hat{d}_j$. We shall minimize the cost function
\begin{equation}
\label{costfunction}
 F(\la,\mu,\partial D) = \frac{1}{2} \sum_{j=1}^N \Vert T_j(\la,\mu,\partial D) - u^{\infty}_{{\rm obs},j} \Vert^2_{L^2(S^{d-1})}
\end{equation}
with respect to $\partial D$ and $(\la,\mu)$ by using a steepest descent method. \\
To do so, we first compute the Fr\'echet derivative of $T$ with respect to $(\la,\mu)$ for fixed $D$. We have the following theorem.
\begin{theorem}
\label{thdiff}
We assume that $D$ is Lipschitz continuous. Then for $(\la,\mu) \in (L^{\infty}(\partial D))^2$ which satisfy assumption \ref{AsPbdirect}, the function $T : (\lambda,\mu,\partial{D}) \rightarrow u^{\infty}$
is Fr\'echet differentiable with respect to $(\la,\mu)$ and its Fr\'echet derivative is given by
\[T'_{\partial D}(\la,\mu)\cdot(h,l)= v_{h,l}^\infty(\hat{x}):=\left< p(\cdot,\hat{x}),{\rm div}_{\Gamma}(l\nabla_{\Gamma}u)+h\,u\right>_{H^1(\Gamma),H^{-1}(\Gamma)},\quad \forall \hat{x} \in S^{d-1},\]
where
\begin{itemize}
\item $u$ is the solution of the problem (\ref{PbInit}),
 \item $p(\cdot,\hat{x})=\Phi^\infty(\cdot,\hat{x})+p^s(\cdot,\hat{x})$ is the solution of (\ref{PbInit}) with $u^i$ replaced by
$\Phi^\infty(\cdot,\hat{x})$.
 \end{itemize}
\end{theorem}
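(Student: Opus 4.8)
The plan is to follow the classical route for a ``material derivative'' of a boundary value problem: first show that the solution map $(\la,\mu)\mapsto u$ is differentiable, identify its derivative $u'$ as the solution of a linearised GIBC problem, and then convert the far field of $u'$ into the announced duality pairing by a reciprocity (adjoint-state) argument. First I would work with the equivalent truncated problem (\ref{PbBorne}) and its variational form on $V_R$: there is a bounded sesquilinear form $a_{\la,\mu}(\cdot,\cdot)$ and a right-hand side $F$ \emph{independent} of $(\la,\mu)$ (it only involves $u^i$) such that $u$ solves $a_{\la,\mu}(u,v)=F(v)$ for all $v\in V_R$. The coefficients enter $a_{\la,\mu}$ only through the two boundary terms $-\int_{\partial D}\mu\,\nabla_{\Gamma}u\cdot\nabla_{\Gamma}\bar v\,ds$ and $\int_{\partial D}\la\,u\bar v\,ds$; hence, writing $a_{\la,\mu}(u,v)=\langle A(\la,\mu)u,v\rangle$, the operator $A(\la,\mu)$ depends \emph{affinely} on $(\la,\mu)\in(L^\infty(\partial D))^2$. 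By Theorem \ref{ThDirect}, $A(\la,\mu)$ is boundedly invertible at the base point; since $\|A(\la+h,\mu+l)-A(\la,\mu)\|\lesssim\|(h,l)\|_{(L^\infty)^2}$ and invertibility is an open condition, $A(\la+h,\mu+l)$ remains invertible for small perturbations, so $u=A(\la,\mu)^{-1}F$ is an analytic (in particular Fr\'echet differentiable) function of $(\la,\mu)$. Differentiating $A(\la,\mu)u=F$ in the direction $(h,l)$ gives $u'=-A^{-1}\big(\partial_{(h,l)}A\big)u\in V_R$.

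Next I would identify $u'$ explicitly. Differentiating the boundary condition $\deri{u}{\nu}+{\rm div}_{\Gamma}(\mu\nabla_{\Gamma}u)+\la u=0$ at the perturbed parameters $(\la+\tau h,\mu+\tau l)$ with $u\mapsto u+\tau u'+o(\tau)$, the order-$\tau$ term shows that $u'$ is the radiating solution of the Helmholtz equation outside $D$ with
\be
\deri{u'}{\nu}+{\rm div}_{\Gamma}(\mu\nabla_{\Gamma}u')+\la u'=-\big({\rm div}_{\Gamma}(l\nabla_{\Gamma}u)+h\,u\big)\quad\text{on }\partial D.
\label{eq:uprime}
\ee
Since the far field map $G$ given by (\ref{uinf}) is a bounded linear operator from the scattered field in $V_R$ to $L^2(S^{d-1})$, and $T(\la,\mu,\partial D)=G\circ(\text{scattered-field map})$, Fr\'echet differentiability of $T$ with respect to $(\la,\mu)$ follows from that of the solution map, and $T'_{\partial D}(\la,\mu)\cdot(h,l)$ equals the far field $v_{h,l}^\infty$ of $u'$. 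It then only remains to rewrite this far field.

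The heart of the argument is the reciprocity step. Both $u'$ and the scattered part $p^s(\cdot,\hat x)$ are radiating, while $p(\cdot,\hat x)=\Phi^\infty(\cdot,\hat x)+p^s(\cdot,\hat x)$ satisfies the homogeneous GIBC. The surface operator is formally symmetric: by (\ref{bypart}) one has $\int_{\partial D}\big(u'\,{\rm div}_{\Gamma}(\mu\nabla_{\Gamma}p)-p\,{\rm div}_{\Gamma}(\mu\nabla_{\Gamma}u')\big)ds=0$ and the $\la$-terms cancel, whence
\[
\int_{\partial D}\Big(u'\big(\deri{p}{\nu}+{\rm div}_{\Gamma}(\mu\nabla_{\Gamma}p)+\la p\big)-p\big(\deri{u'}{\nu}+{\rm div}_{\Gamma}(\mu\nabla_{\Gamma}u')+\la u'\big)\Big)ds=\int_{\partial D}\Big(u'\deri{p}{\nu}-p\deri{u'}{\nu}\Big)ds.
\]
Because $p$ satisfies the homogeneous GIBC and $u'$ satisfies (\ref{eq:uprime}), the left-hand side equals $\langle p(\cdot,\hat x),{\rm div}_{\Gamma}(l\nabla_{\Gamma}u)+h\,u\rangle_{H^1(\Gamma),H^{-1}(\Gamma)}$. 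For the right-hand side I split $p=\Phi^\infty(\cdot,\hat x)+p^s$: the $p^s$ contribution vanishes by Green's second identity together with the radiation conditions for the two radiating fields $u'$ and $p^s$, while, recalling that $\Phi^\infty(\cdot,\hat x)$ is (up to the constant $\gamma(d)$) the plane wave $e^{-ik\,\hat x\cdot y}$, the $\Phi^\infty(\cdot,\hat x)$ contribution is exactly the far-field representation (\ref{uinf}) of $u'$, i.e. $v_{h,l}^\infty(\hat x)$. Equating the two sides yields the claimed identity; the minus sign on the source in (\ref{eq:uprime}) is precisely what produces the positive sign in the final pairing.

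I expect the main difficulty to be technical rather than conceptual, and twofold. The first is to make the differentiability of the solution map and the control of the remainder $o(\|(h,l)\|)$ rigorous in the correct norms; the operator identity $u'=-A^{-1}(\partial_{(h,l)}A)u$ handles this cleanly, provided one checks the mapping properties $h\,u,\ {\rm div}_{\Gamma}(l\nabla_{\Gamma}u)\in H^{-1}(\partial D)$, which hold since $u_{|\partial D}\in H^1(\partial D)$ and $h,l\in L^\infty(\partial D)$; boundedness of $(h,l)\mapsto v_{h,l}^\infty$ into $L^2(S^{d-1})$ then follows from the uniform $H^1(\partial D)$ bound on $p(\cdot,\hat x)$. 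The second is that, under the sole assumption that $D$ is Lipschitz, the normal traces $\deri{u'}{\nu}$, $\deri{p}{\nu}$ and the surface operators live in negative-order spaces, so the Green identity above and the vanishing of the $p^s$-term must be read as duality pairings and justified through the variational formulations rather than pointwise.
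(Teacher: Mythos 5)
The paper does not actually prove this theorem in situ --- its ``proof'' is a one-line citation to \cite{BoChHa11} --- and your argument is correct and reconstructs exactly the strategy used there: Fr\'echet differentiability of the solution map through the affine dependence of the variational operator $A(\la,\mu)$ on the boundary terms (with invertibility preserved under small $(L^\infty(\partial D))^2$ perturbations, which rightly avoids imposing the sign conditions of assumption \ref{AsPbdirect} on $(h,l)$), identification of $u'$ as the radiating solution of the linearised GIBC problem, and the adjoint-state/reciprocity computation with $p(\cdot,\hat x)$ that converts the far field of $u'$ into the stated duality pairing via the representation (\ref{uinf}). Your sign bookkeeping is right (the minus sign on the source ${-(\rm div}_{\Gamma}(l\nabla_{\Gamma}u)+h\,u)$ for $u'$ is what produces the positive pairing), and your closing caveat --- that for Lipschitz $\partial D$ the Green identities and the vanishing of the $p^s$ cross term must be read as $H^{-1/2}$--$H^{1/2}$ and $H^{-1}(\partial D)$--$H^{1}(\partial D)$ dualities justified through the variational formulation --- is precisely the technical care the cited proof takes.
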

\begin{proof}
The proof of this result can be found in \cite{BoChHa11}.
\end{proof}
\\
\noindent The Fr\'echet derivative of $T$ with respect to $\partial D$ for fixed $(\la,\mu)$ is given by Theorem \ref{main} and its corollary \ref{coldiff}.
With the help of corollary \ref{coldiff} and Theorem \ref{thdiff}, and in the case $\partial D$, $\la$ and $\mu$ are analytic,
we obtain the following expressions for the partial derivatives of the cost function $F$ with respect to $(\la,\mu)$ and $\partial D$ respectively.
\begin{align} 
F'_{\partial D}(\la,\mu) &\cdot (h,l)  =
\sum_{j=1}^N \Re e\left(\int_{\partial D}G_j({\rm div}_{\Gamma}(l\nabla_{\Gamma}u_j)+ hu_j)\,dy \right),
\label{dlamu}\\
&F'_{\la,\mu}(\partial D)\cdot \eps  =-\sum_{j=1}^N \Re e \left(\int_{\partial D} G_j(B_\eps u_j)\,dy\right)
\label{dobs}
\end{align}
where 
\begin{itemize}
\item $u_j$ is the solution of the problem (\ref{PbInit}) which is associated to plane wave direction $\hat{d}_j$,
\item $G_j=G_j^i+G_j^s$ is the  solution of problem (\ref{PbInit}) with $u^i$ replaced by
\[
G_j^i(y) := \int_{S^{d-1}}\Phi^{\infty}(y,\hat{x}) \overline{(T_j(\la,\mu,\partial D) -u^{\infty}_{{\rm obs},j})}\,d\hat{x}.
\]
\end{itemize}
In the numerical part of the paper we restrict ourselves to the two dimensional setting, that is $d=2$.
The minimization of the cost function $F$
alternatively with respect to $D$, $\la$ and $\mu$ relies on the directions of steepest descent given by (\ref{dlamu}) and (\ref{dobs}).
The minimization with respect to $(\la,\mu)$ is already exposed in \cite{BoChHa11}, so that we only describe the minimization with respect to $D$.
It is essential to remark from Theorem \ref{main} that the partial derivative with respect to $D$ depends only on the values of $\eps$ on $\partial D$.
With the decomposition $\eps =\eps_\tau \tau + \eps_\nu \nu$, where $\tau$ is the tangential unit vector, we formally compute $(\eps_\tau,\eps_\nu)$ on $\partial D$ such that
\[\eps_\tau \tau + \eps_\nu \nu= -\alp F'_{\la,\mu}(\partial D),\]
where $\alp>0$ is the descent coefficient.
In order to decrease the oscillations of the updated boundary, similarly to \cite{BoChHa11} we use a $H^1$-regularization, that is we search $\eps_\tau$ and $\eps_\nu$ in $H^1(\partial D)$ such that
for all $\phi \in H^1(\partial D)$,
\begin{align} &\eta_\tau \int_{\partial D}\nabla_\Gamma \eps_\tau\cdot\nabla_\Gamma \phi\,ds + \int_{\partial D}\eps_\tau \phi\,ds=-\alp F'_{\la,\mu}(\partial D)\cdot(\phi\, \tau),\label{pbtau}\\
 &\eta_\nu \int_{\partial D}\nabla_\Gamma \eps_\nu\cdot\nabla_\Gamma \phi\,ds + \int_{\partial D}\eps_\nu \phi\,ds=-\alp F'_{\la,\mu}(\partial D)\cdot(\phi\, \nu),\label{pbnu}\end{align}
where $\eta_\tau$, $\eta_\nu>0$ are regularization coefficients, while $F'_{\la,\mu}(\partial D)\cdot(\phi\,\tau)$ and $F'_{\la,\mu}(\partial D)\cdot(\phi\,\nu)$ are given by (\ref{dobs}) (see more explicit expressions in \cite{BoChHa11report}).
The updated obstacle $D_\eps$ is then obtained by moving the mesh points $x$ of $\partial D$ to the points $x_\eps$ defined by
$x_\eps=x + (\eps_\tau \tau + \eps_\nu \nu)(x)$, while the extended impedances on $\partial D_\eps$ are defined, following (\ref{transport}), by $\la_\eps(x_\eps)=\la(x)$ and $\mu_\eps(x_\eps)=\mu(x)$.
The points $x_\eps$ enable us to define a new domain $D_\eps$, and we have to remesh the complementary domain $\Omega^\eps_R=B_R \setminus \overline{D_\eps}$ to solve the next forward problems.
The descent coefficient $\alp$ and the regularization parameters $\eta_\tau,\eta_\nu$ are determined as follows: $\alp$ is increased (resp. decreased) and $\eta_\tau,\eta_\nu$ are decreased (resp. increased) as soon as the cost function decreases (resp. increases). The algorithm stops as soon as $\alp$ is too small. 
With the help of the relative cost function, namely
\begin{equation}
\label{relative_costfunction}
 {\rm Error}: = \frac{1}{N} \sum_{j=1}^N \frac{\Vert T_j(\la,\mu,\partial D) - u^{\infty}_{{\rm obs},j} \Vert_{L^2(S^1)}}{\Vert u^{\infty}_{{\rm obs},j} \Vert_{L^2(S^1)}},
\end{equation}
we are able to determine if the computed $(\la,\mu,\partial D)$ corresponds to a global or a local minimum: in the first case ${\rm Error}$ is approximately equal to the amplitude of noise while in the second case it is much larger.
\section{Numerical results}
In order to handle dimensionless impedances, we replace $\la$ by $k\la$ and $\mu$ by $\mu/k$ in the boundary condition of problem (\ref{PbBorne}) without changing the notations. 
Problem (\ref{PbBorne}) is solved by using a finite element method based on the
variational formulation associated with problem (\ref{PbBorne}) and which is
introduced in \cite{BoChHa10report}. We  used classical Lagrange finite elements. The variational formulations (\ref{pbtau}) (\ref{pbnu}) as well as those used to update the impedances $\la$ and $\mu$ (see \cite{BoChHa10report}) are solved by using the same finite element basis.  
All computations were performed with the help of the software FreeFem++
\cite{FF++}. We obtain some artificial data with forward computations for some
given data $(\la_0,\mu_0,\partial D_0)$. The resulting far fields $u_{{\rm
    obs},j}^\infty$, $j=1,N$ are then corrupted with Gaussian noise of various amplitudes. More precisely, for each Fourier coefficient of the far field we compute a Gaussian noise with normal distribution. Such a perturbation is multiplied by a constant which is calibrated in order to obtain a global relative $L^2$ error of prescribed amplitude: $1 \%$ or $5 \%$.\\
We use the same finite element method to compute the artificial data and to solve the forward problem during the iterations of the inverse problem.
However, we avoid the ``inverse crime" for two reasons. First, the mesh used to obtain the artificial data is different from the one used to initialize the identification process. Secondly, as said before, the artificial data are contaminated by some Gaussian noise. 
In figure \ref{Fig8} we show, on a particular example, the mesh used to compute the artificial data, the mesh based on our initial guess and some intermediate mesh 
obtained during the iterations of the inverse problem.
\begin{figure}[h!]
\begin{minipage}{\textwidth}
\subfigure[Used mesh for artificial data]{\includegraphics[width =.40\textwidth]{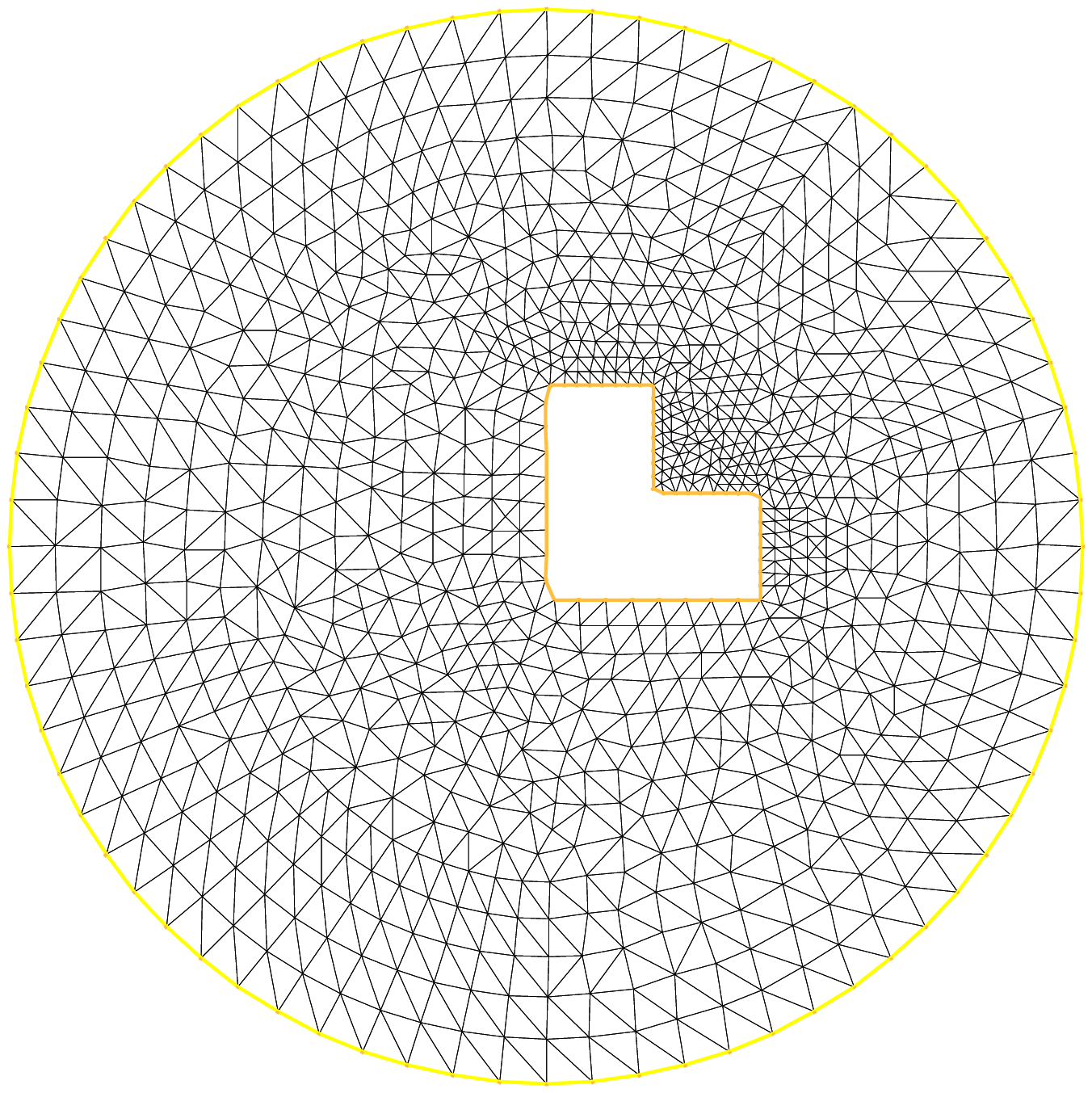}}
\subfigure[Mesh for the initial guess]{\includegraphics[width =.285\textwidth]{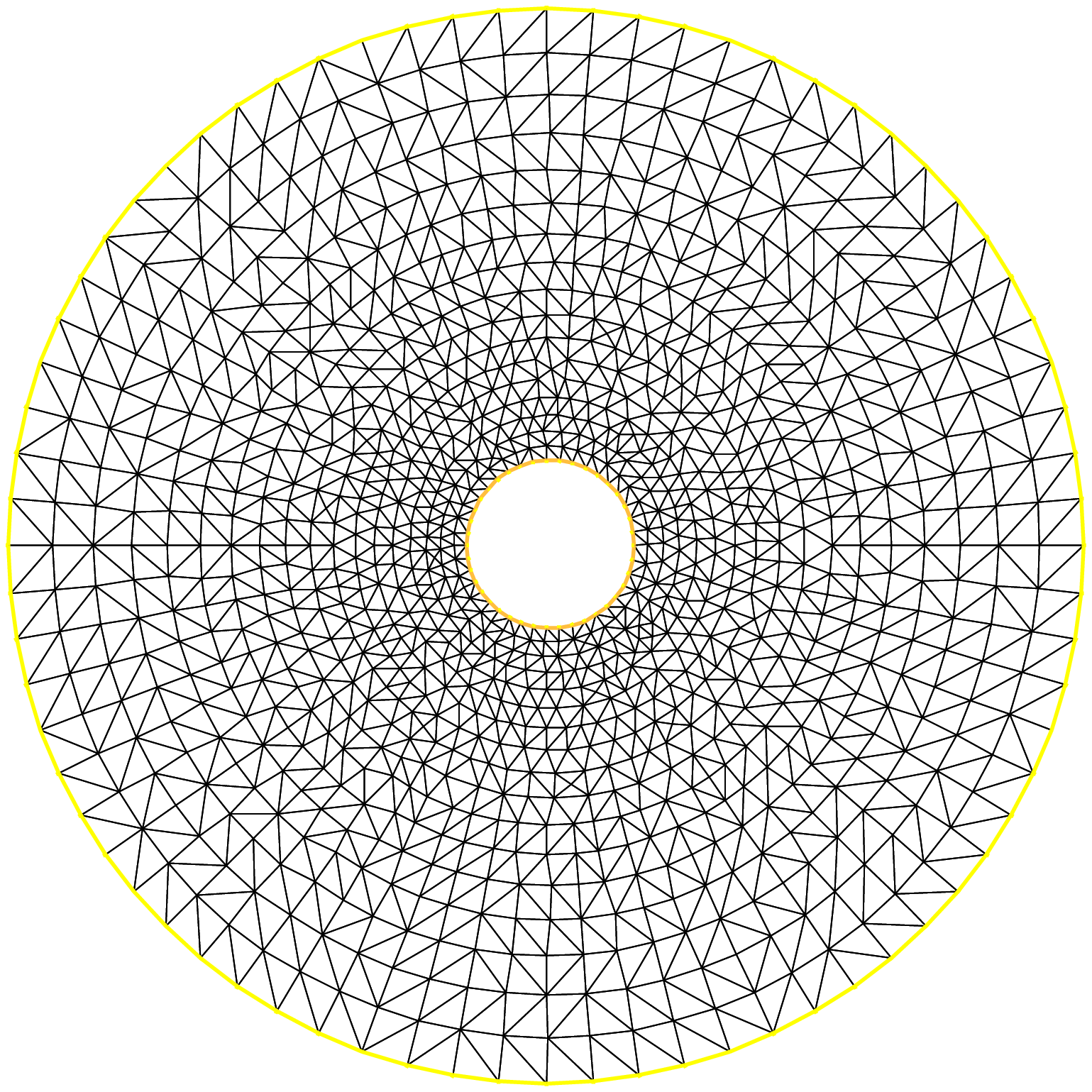}}
\subfigure[Mesh for an intermediate iteration mesh]{\includegraphics[width =.285\textwidth]{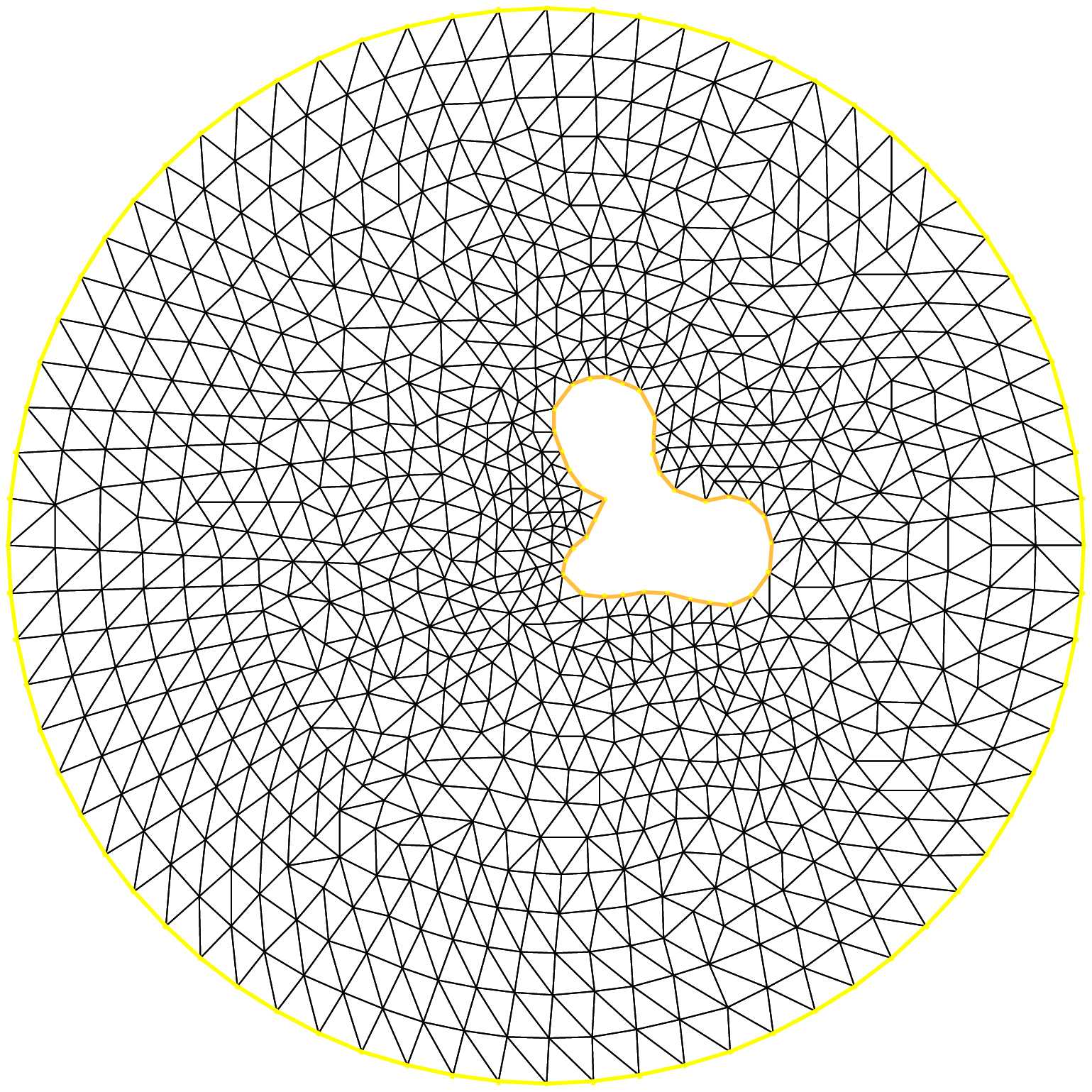}}
\caption{Example of used meshes}
\label{Fig8}
\end{minipage}
\end{figure}
\subsection{Reconstruction of an obstacle with known impedances}
First we try to reconstruct an $L-$shaped obstacle $D_0$ with known impedances $(\la_0,\mu_0)$. 
The result is shown in Figure \ref{Fig2} in the case of $1\%$ noise by using
only two incident waves, i.e. $N=2$. 
The results are shown in Figure \ref{Fig3} in the case of $5\%$ noise and $N=2$, as well as $5\%$ noise and $N=8$, respectively. This enables us to test the influence of the amplitude of noise as well as 
the influence of the number of incident waves. 
In order to evaluate the impact of the initial guess on the quality of the reconstruction, we consider another initial guess which is farther from the true obstacle, in the presence of $5\%$ noise. 
We obtain poor reconstructions when only two incident waves are used (see
Figures \ref{Fig4}(a) and \ref{Fig3}(a)) while the accuracy is much better in
the case of eight incident waves (see
Figures \ref{Fig4}(b) and \ref{Fig3}(b)).
In the remainder of the numerical section all reconstructions will be done using eight incident waves.
\begin{figure}[h!]
\begin{minipage}{\textwidth}
\subfigure[Intermediate steps]{ \includegraphics[width =.4\textwidth]{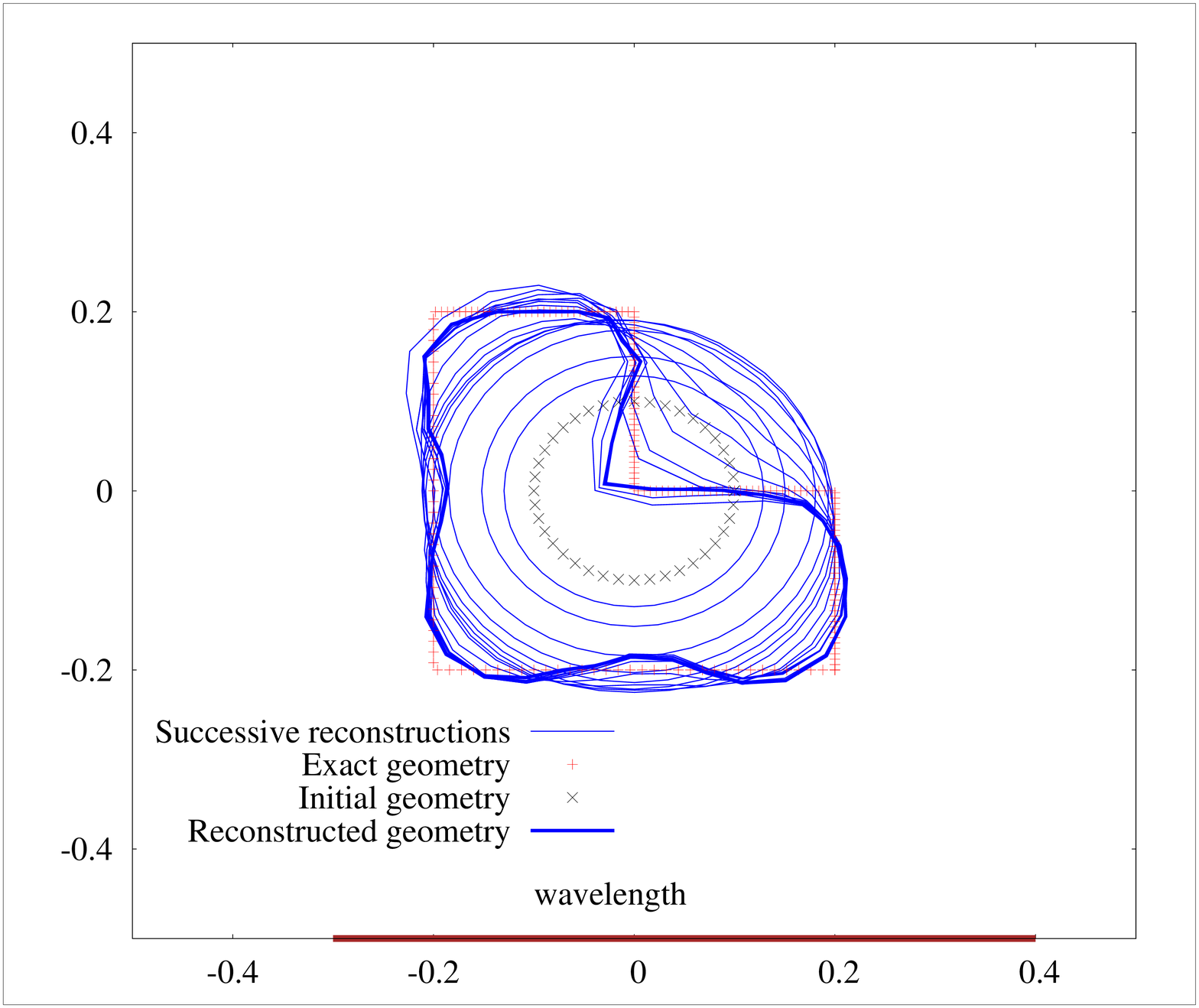}}
\hfill\subfigure[$1\%$ noise, $N=2$, Final Error = $2.1 \%$]{ \includegraphics[width =.4\textwidth]{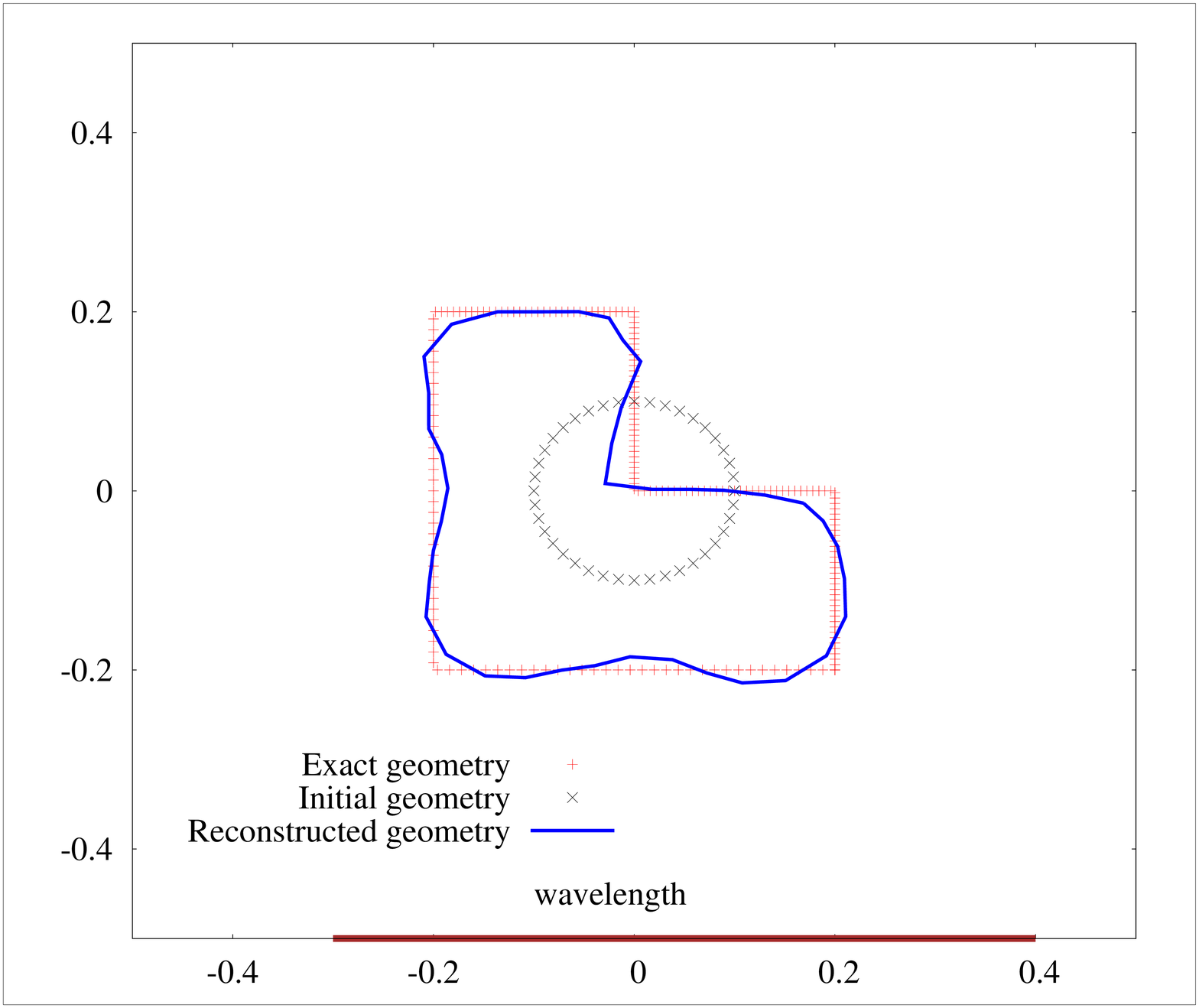}}
\caption{Case of known impedances and good initial guess}
\label{Fig2}
\end{minipage}
\end{figure}
\begin{figure}[h!]
\begin{minipage}{\textwidth}
\subfigure[$5\%$ noise, $N=2$, Final Error = $6.3 \%$]{ \includegraphics[width =.4\textwidth]{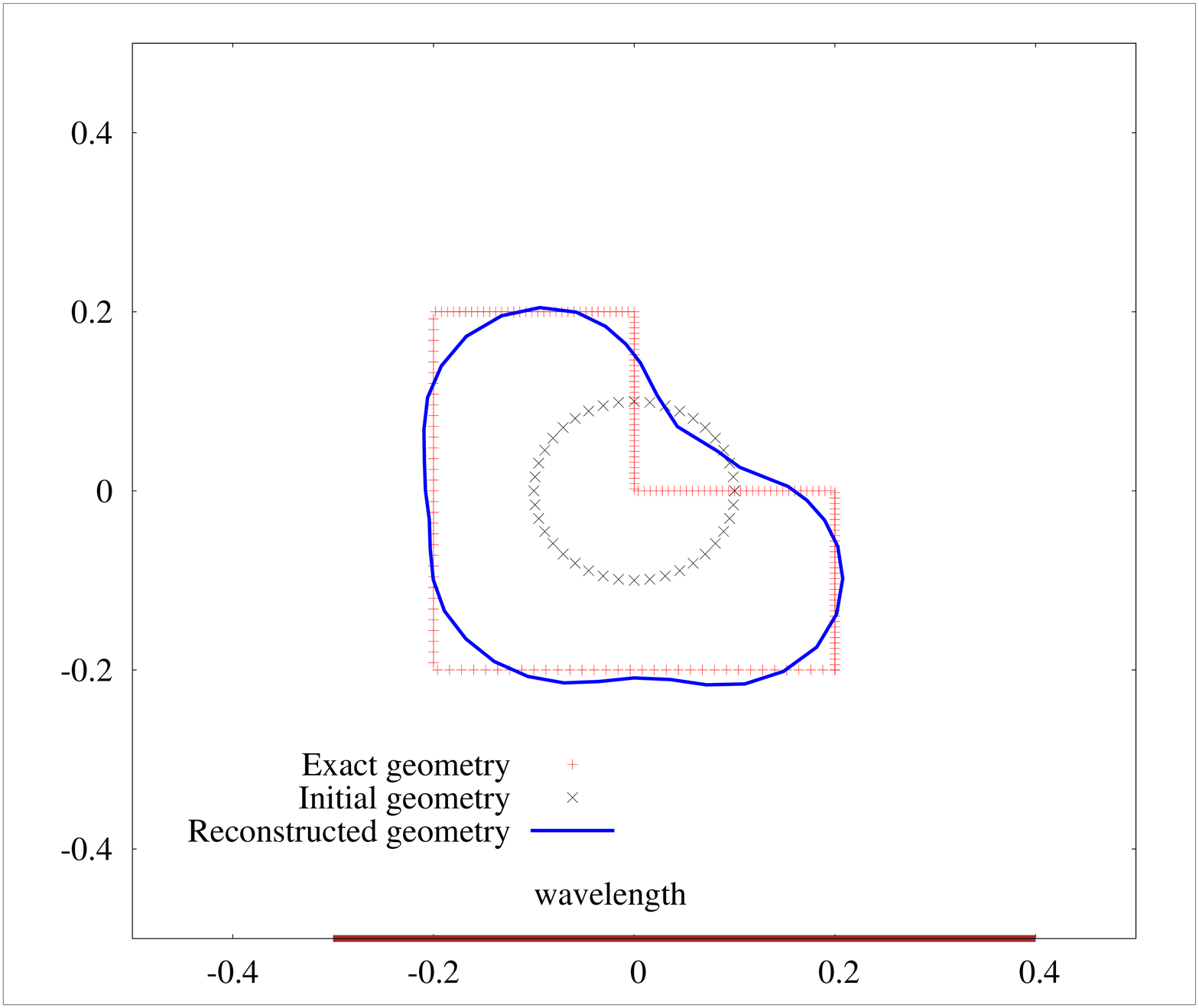}}
\hfill\subfigure[$5\%$ noise, $N=8$, Final Error = $5.0 \%$]{ \includegraphics[width =.4\textwidth]{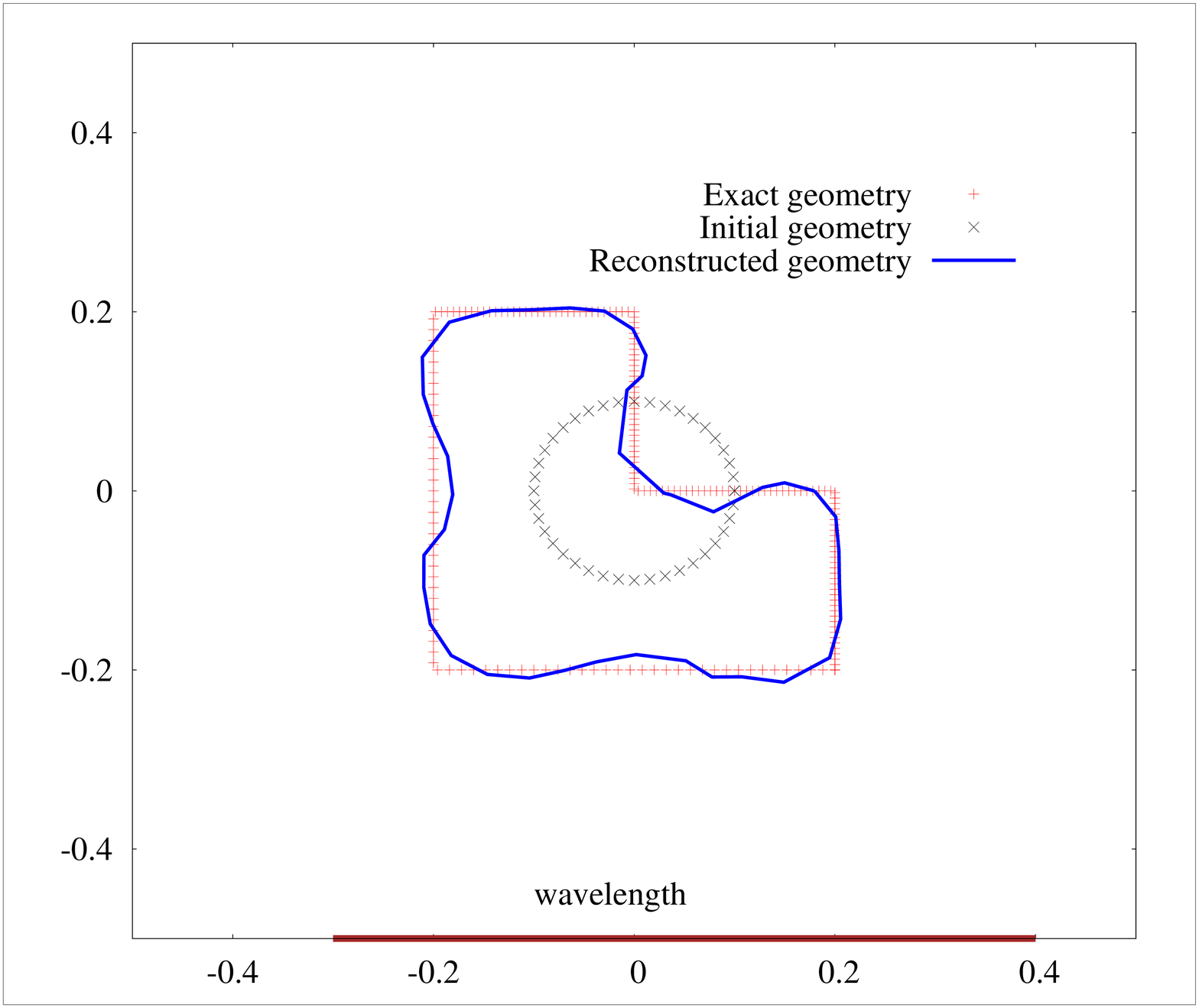}}
\caption{Case of known impedances and good initial guess, influence of noise and of $N$}
\label{Fig3}
\end{minipage}
\end{figure}
\begin{figure}[h!]
\begin{minipage}{\textwidth}
\subfigure[$5\%$ noise, $N=2$, Final Error = $25.2 \%$]{\includegraphics[width =.4\textwidth]{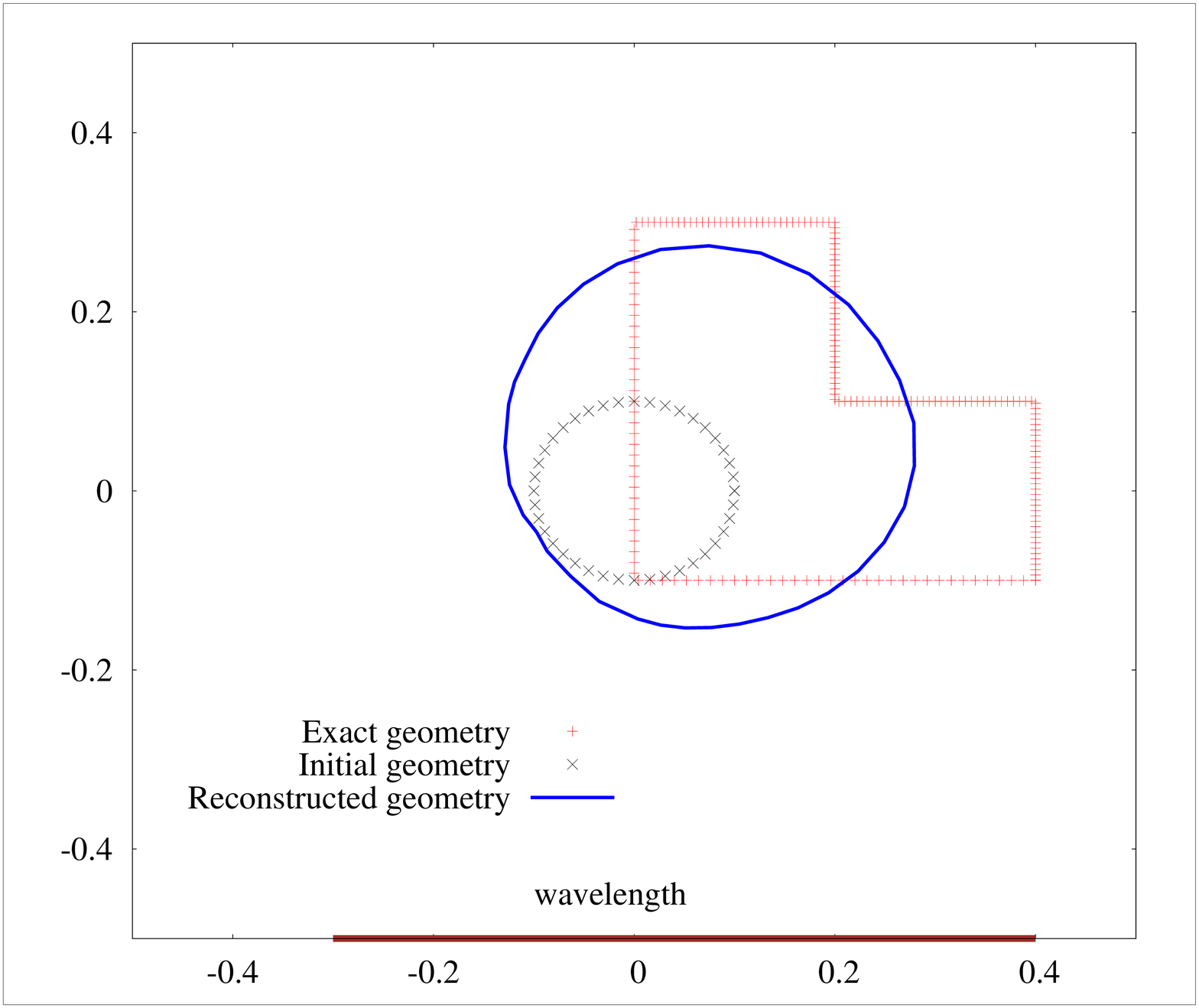}}
\hfill\subfigure[$5\%$ noise, $N=8$, Final Error = $5.8 \%$]{\includegraphics[width =.4\textwidth]{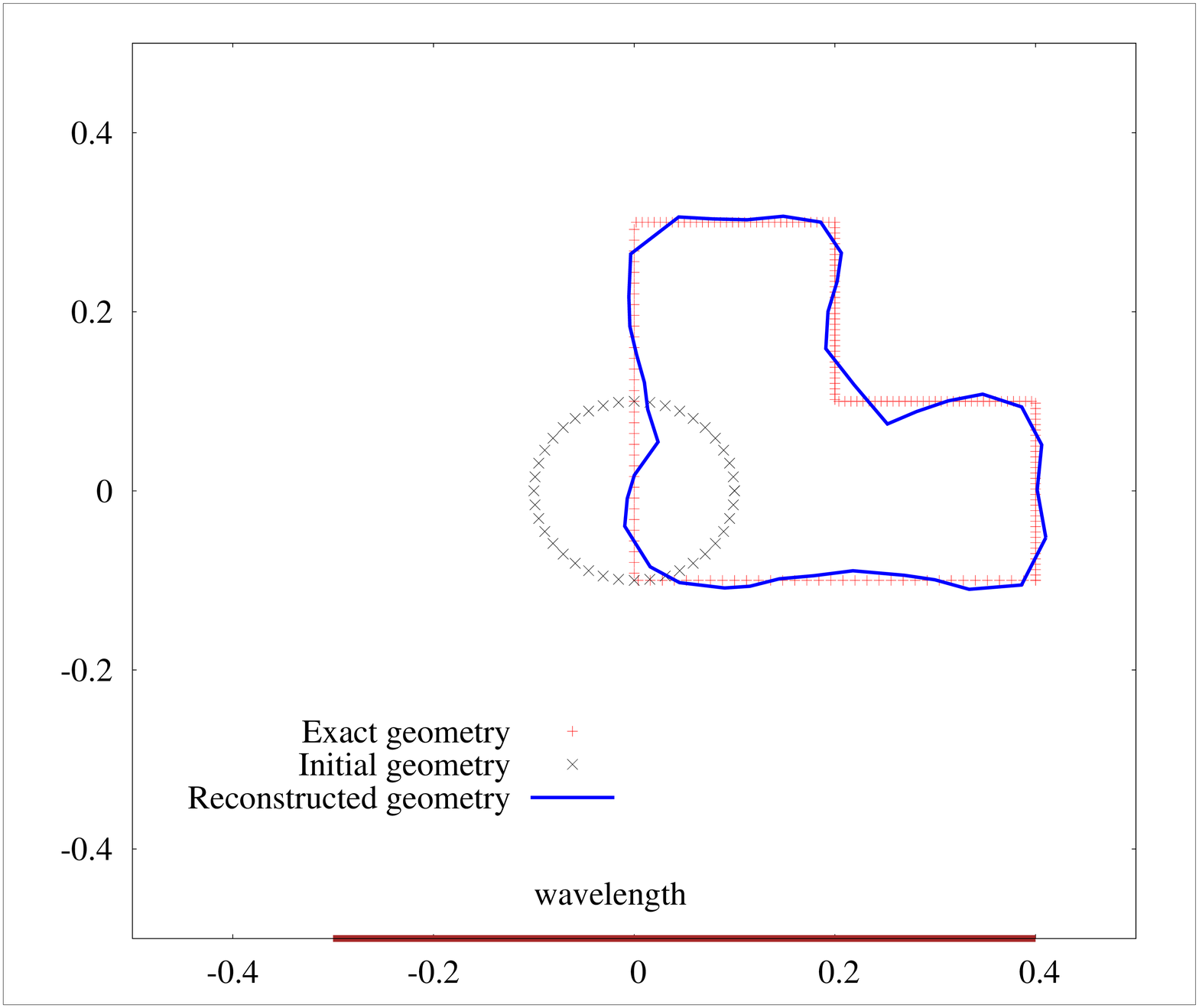}}
\caption{Case of known impedances and bad initial guess, increasing number of incident waves}
\label{Fig4}
\end{minipage}
\end{figure}
\subsection{Reconstruction of the geometry and constant impedances}
Secondly we assume that both the obstacle $D_0$ and the impedances $(\la_0,\mu_0)=(0.5i,2)$ are unknown, but these impedances are constants.
Starting from $(i,1.5)$ as initial guess for $(\la,\mu)$, the retrieved impedances are $(\la,\mu)=(0.49i,1.99)$ for $1\%$ noise and $(\la,\mu)=(0.51i,1.93)$ for $5\%$ noise, while the corresponding retrieved obstacles are shown on Figure \ref{Fig5}.
\begin{figure}[h!]
\begin{minipage}{\textwidth}
\subfigure[$1\%$ of noise, $N=8$, Final Error = $1.4 \%$]{ \includegraphics[width =.4\textwidth]{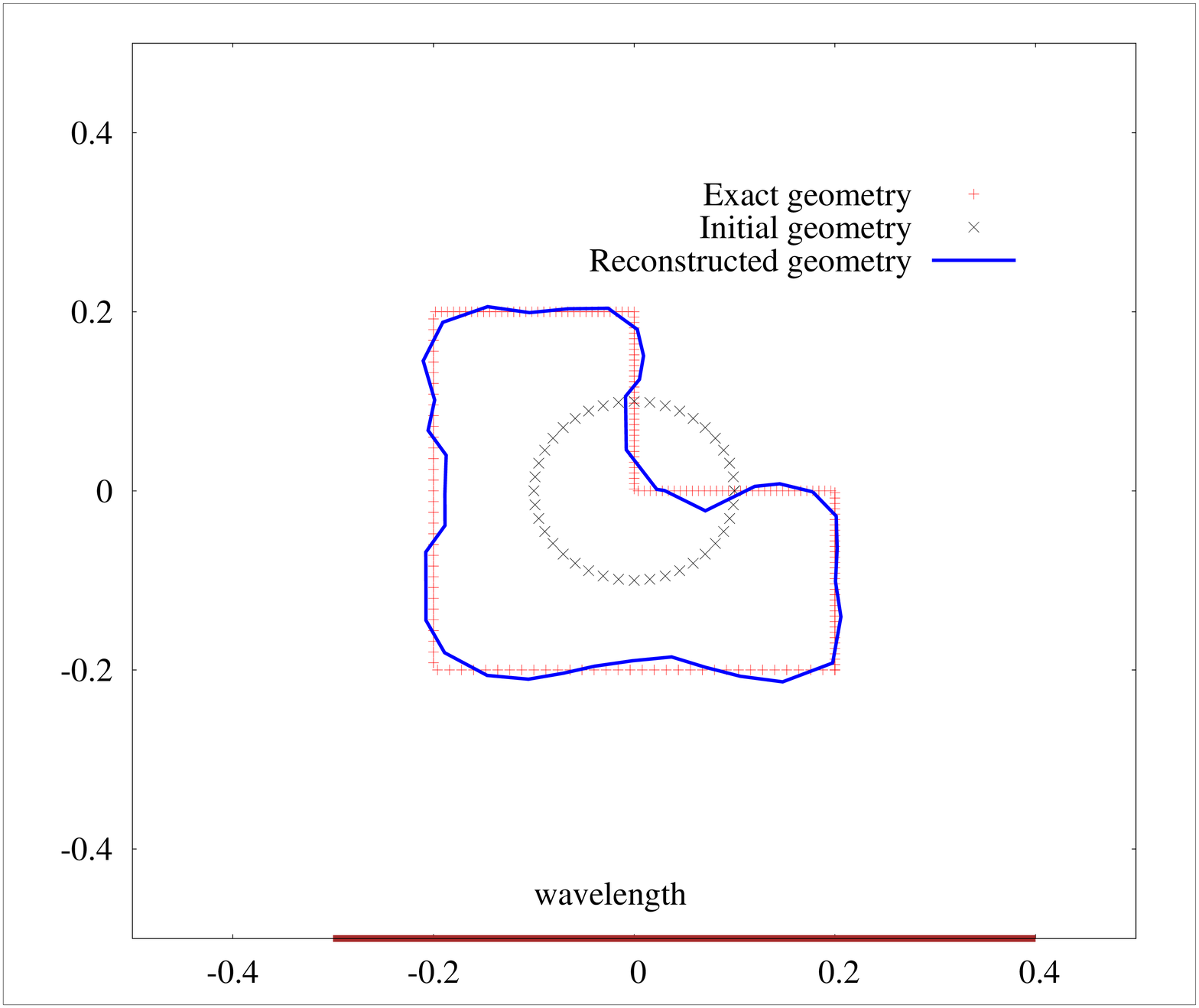}}
\hfill \subfigure[$5\%$ of noise, $N=8$, Final Error = $5.8 \%$]{ \includegraphics[width =.4\textwidth]{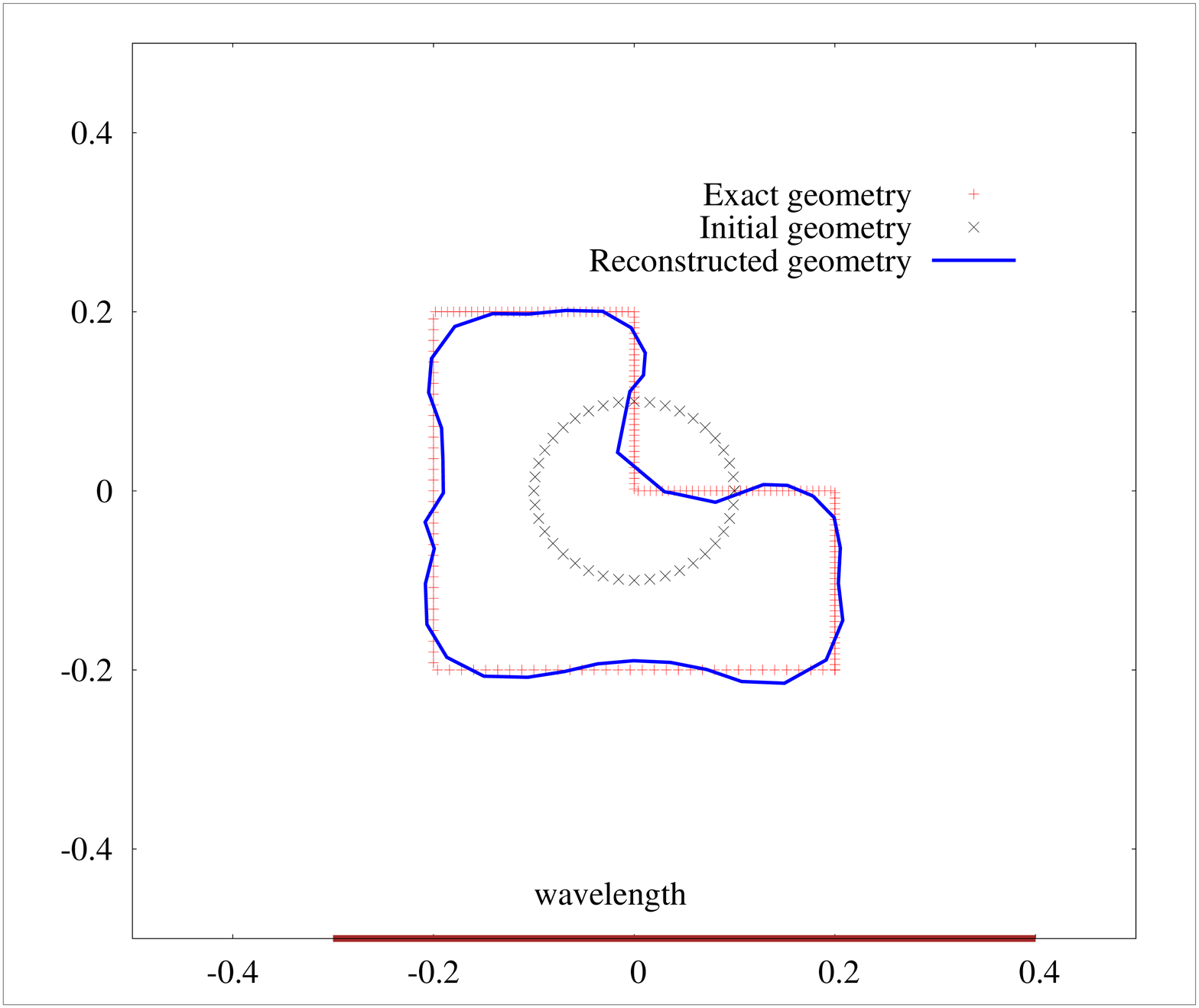}}
\caption{Case of constant but unknown impedances}
\label{Fig5}
\end{minipage}
\end{figure}
\subsection{Reconstruction of the geometry and functional impedances}
In order to emphasize the role played by the tangential part of the mapping $\eps$ in the optimization of the cost function $F$ for functional impedances (see remark \ref{rem1}), we first consider a very academic case. We try to reconstruct a circle $D_0$ of radius $R_0=0.3$ and an impedance $\la_0(\theta)=0.5(1+\sin^2(\theta+ \frac{\pi}{6}))$, where $\theta$ is the polar angle, starting from an initial circle of same center and radius $0.2$ and from the initial impedance $\la(\theta)=0.5(1+\sin^2(\theta))$. 
Compared to the true obstacle, the initial guess is hence a smaller and rotated circle.
Here $\mu=0$ for sake of simplicity. 
The amplitude of noise is $5\%$ and we use eight incident waves.
As can be seen in Figure \ref{Fig6}, the obstacle $D_0$ and the impedance $\la_0$ are quite well reconstructed even if we use only the gradient iterations on the geometry 
(we do not use the gradient iterations on the impedance).\\
We end this numerical section with a more complicated example. The goal is to
retrieve the obstacle $D_0$ defined with polar coordinates $(r,\theta)$ by
$r=0.3+0.08\cos(3\theta)$, as well as the impedances
$\la_0=0.5(1+\sin^2\theta)i$ and $\mu_0=0.5(1+\cos^2\theta)$, assuming that
both the real part of $\la$ and the imaginary part of $\mu$ are $0$, in the
presence of $5\%$ noise and using eight incident waves. Notice that in this
case the obstacle is star-shaped, which is not necessary to apply our
optimization process, but it enables us to compare the retrieved and the exact
impedances in a simple way. The results are presented in Figure \ref{Fig7} and
show fairly accurate reconstructions.
\begin{figure}[h!]
\begin{minipage}{\textwidth}
\hfill \subfigure[Reconstruction of $D$]{ \includegraphics[width =.33\textwidth]{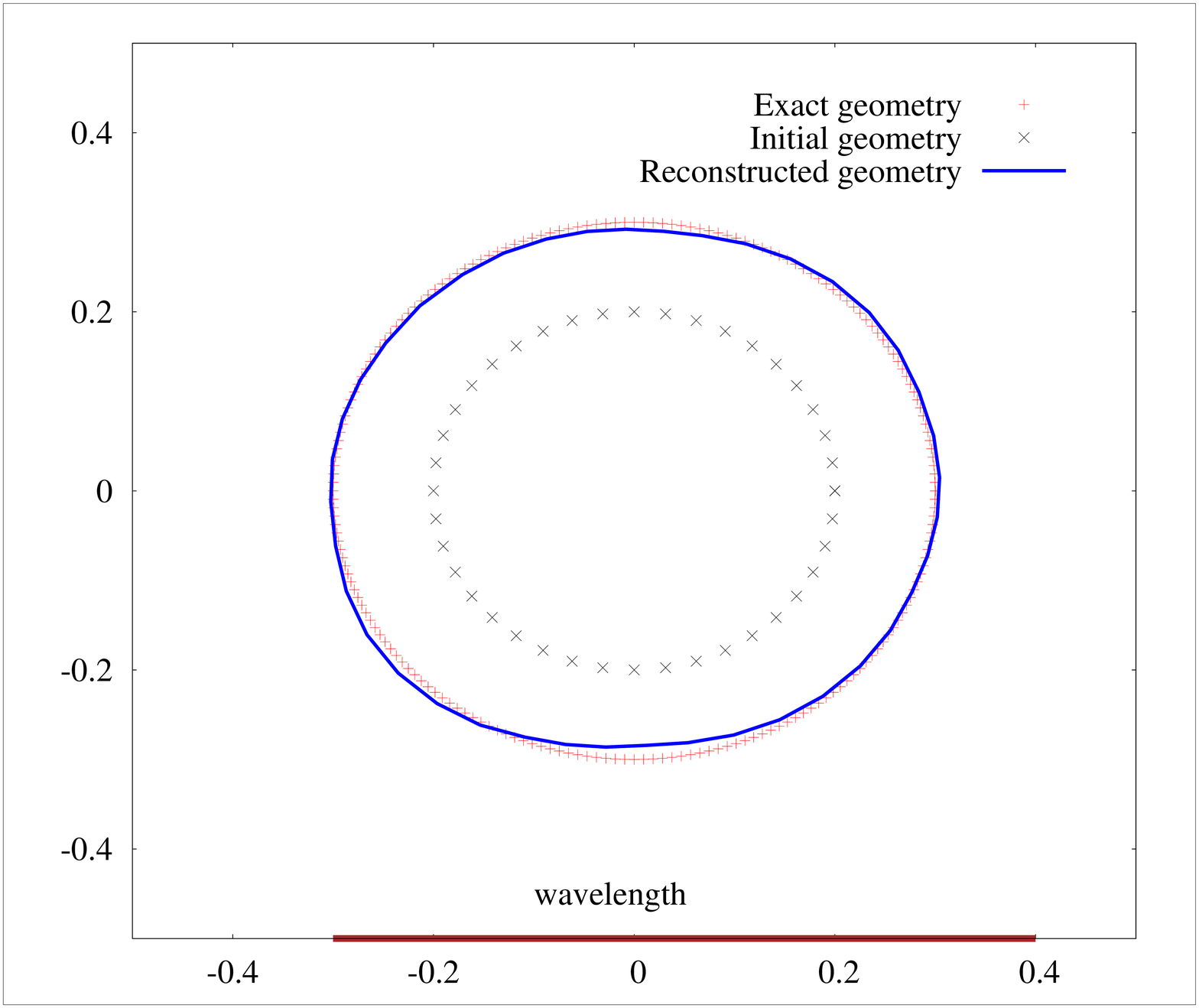}}
\hfill\subfigure[Reconstruction of ${\mathcal I}m\la$]{ \includegraphics[width =.33\textwidth]{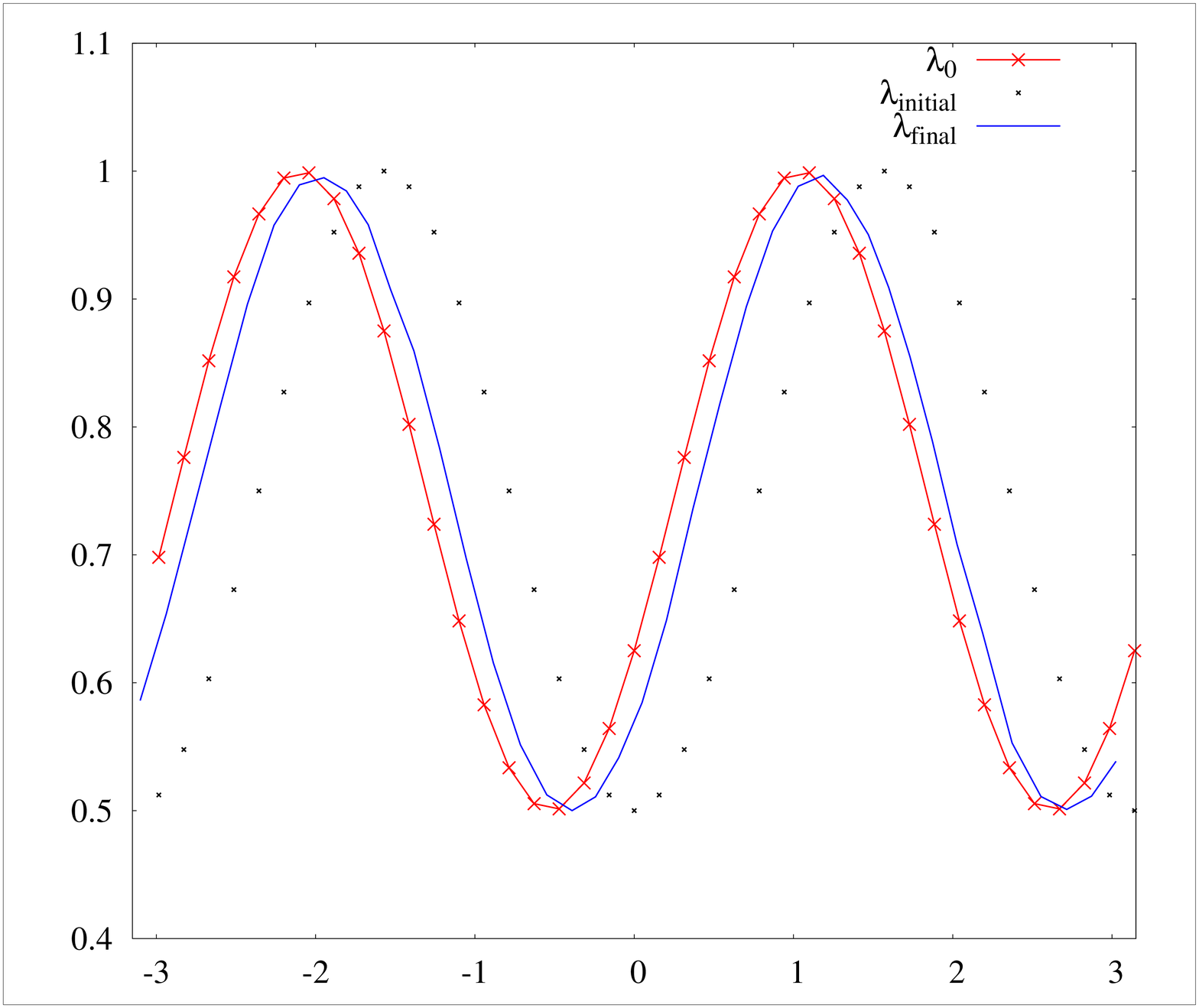}} \hfill
$\,$
\caption{Reconstruction of a circular obstacle and the impedance using only the
  derivative with respect to $D$, $N=8$}
\label{Fig6}
\end{minipage}
\end{figure}
\begin{figure}[h!]
\begin{minipage}{\textwidth}
\subfigure[Reconstruction of ${\mathcal I}m\la$]{\includegraphics[width =.325\textwidth]{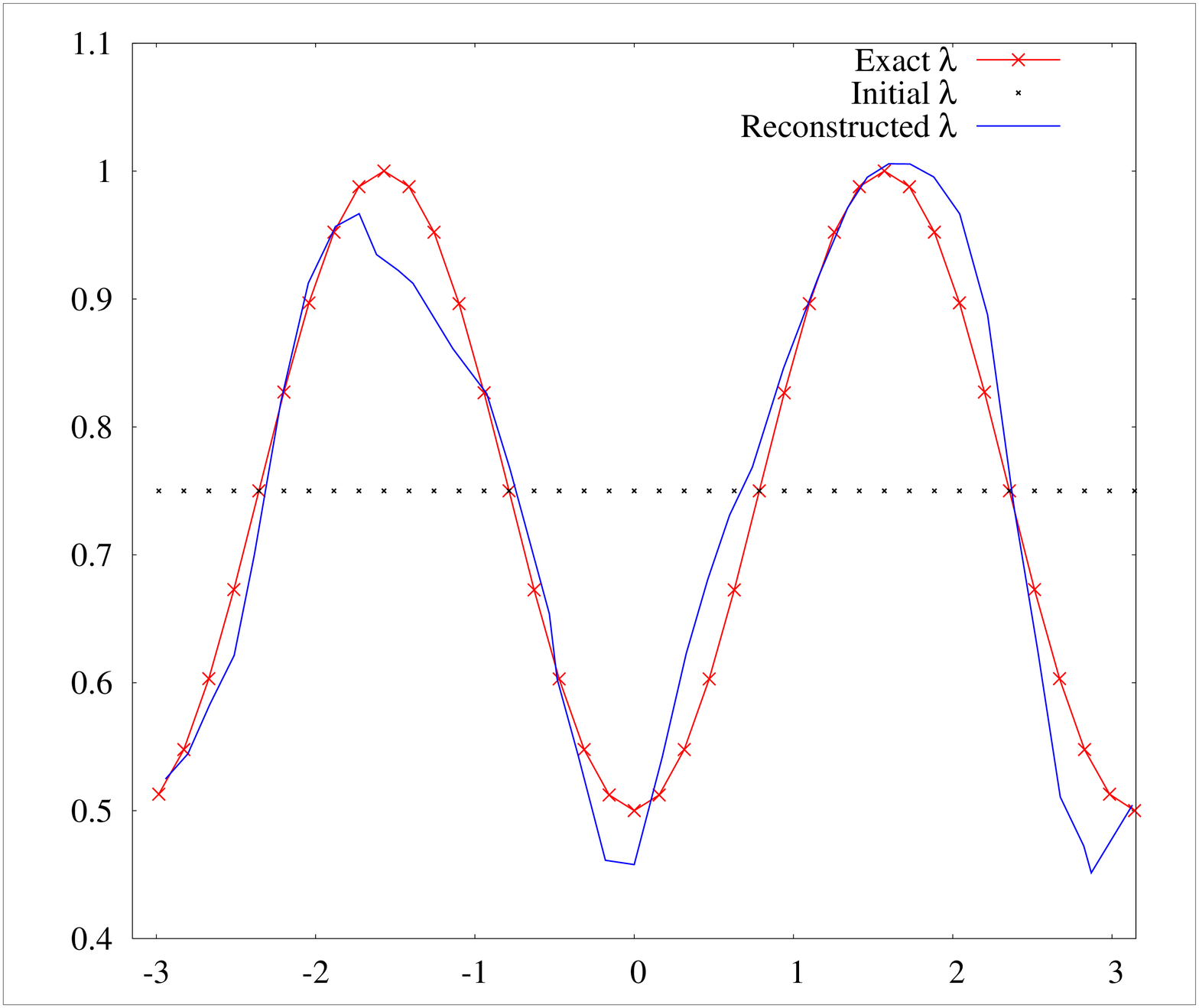}}
\subfigure[Reconstruction of ${\mathcal R}e\mu$]{\includegraphics[width =.325\textwidth]{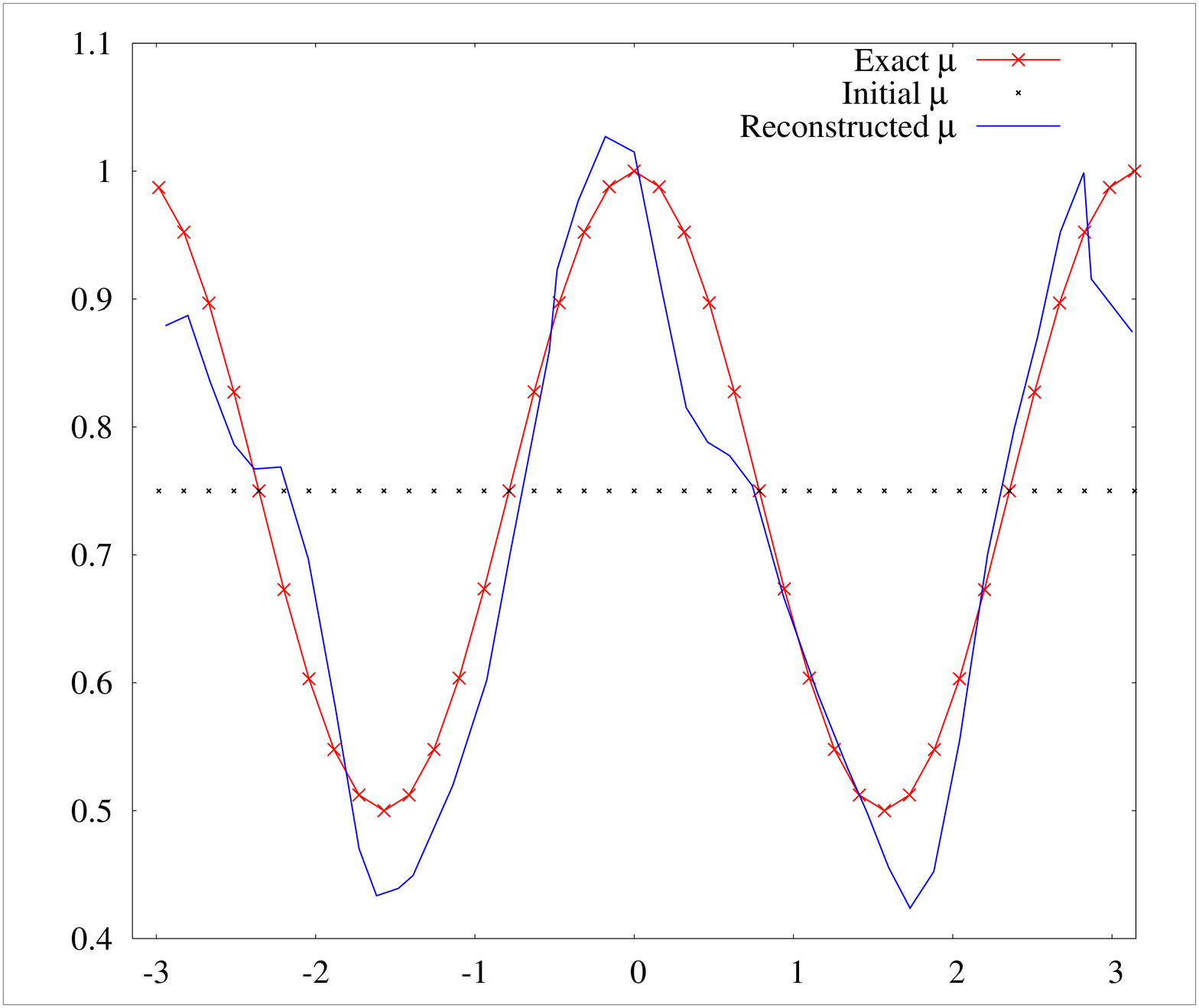}}
\subfigure[Reconstruction of $D$]{\includegraphics[width =.325\textwidth]{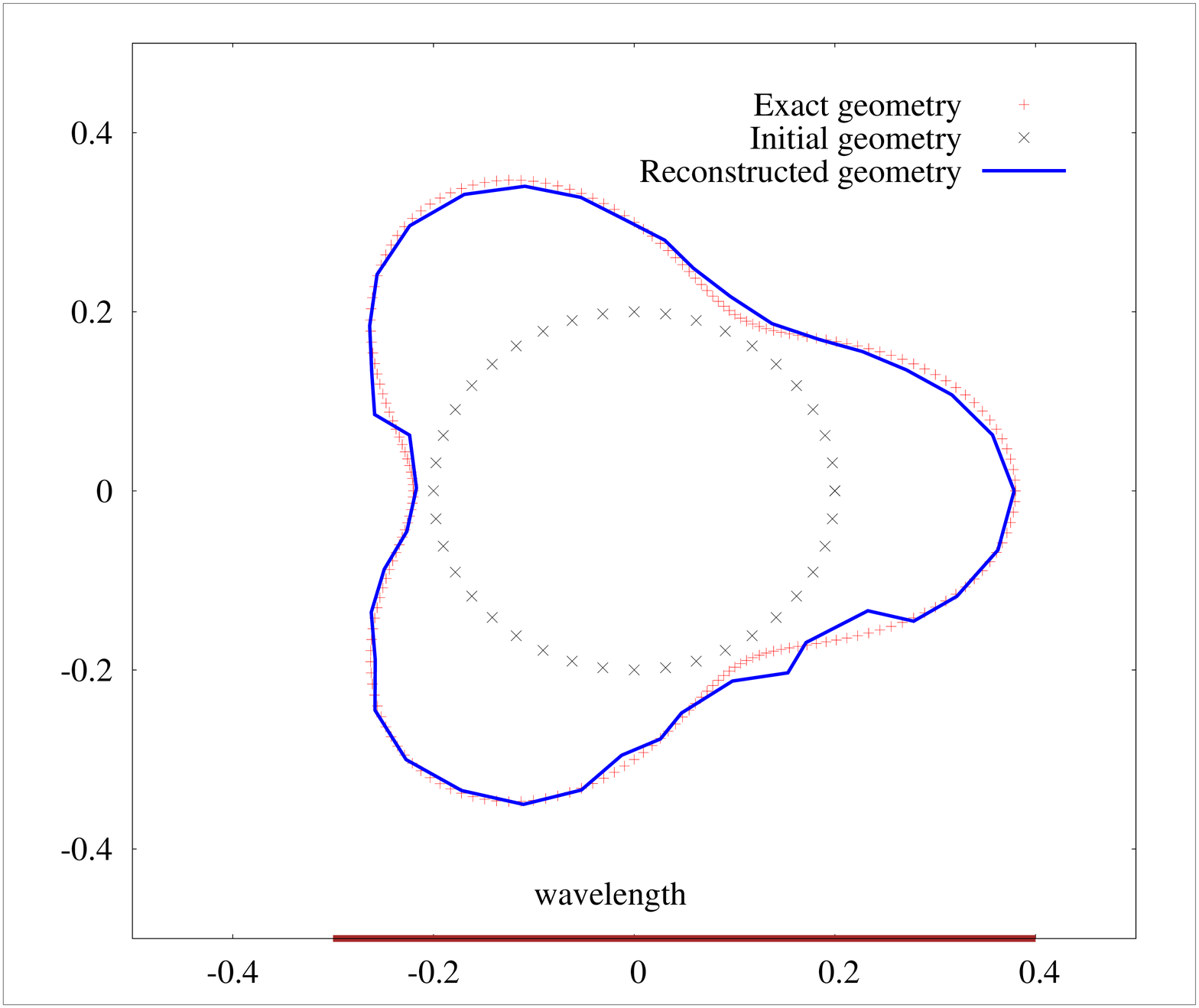}}
\caption{Case of unknown arbitrary geometry and unknown variable impedances, $N=8$}
\label{Fig7}
\end{minipage}
\end{figure}
\begin{remark}
It should be noticed that our numerical results are somewhat irregular, both for the obstacle and for the impedances.
Concerning the obstacle, this is due to the fact that we update the boundary $\partial D$ of the obstacle point by point following the procedure described at the end of section \ref{optim}. Concerning the impedances, this is due to the fact that the discretization basis for $(\lambda,\mu)$ is formed by the traces of the finite elements on the boundary of the obstacle.
Moreover, there is no regularization term in the cost function that could penalize the variations of $\partial D$ or $(\lambda,\mu)$. Our choice enables us to capture some brutal variations of the unknown functions.
\end{remark}
\section*{Appendix}
We give below the proof of Lemma \ref{lem}.
In order to prove this lemma, we consider the local basis $(e^t_j,e^t_d)$, $j=1,d-1$, where vectors $e^t_j$ are defined by (\ref{tangent}), while $e^t_d=\eps$.
We can hence define the associated covariant basis $(f_t^i)$, $i=1,d$. Note that $f_t^i \neq e_t^i$ ($i=1,d-1$), where covariant vectors $e_t^i$ are defined by (\ref{covariant}).
We begin with the proof of the first part of Lemma \ref{lem}.
We have, denoting $\tilde{\la}=\la \circ \phi$
\[ \nabla \la_t=\sum_{i=1}^{d-1}\deri{\tilde{\la}}{\xi_i}f_t^i + \deri{\tilde{\la}}{t}f_t^d.\]
By the definition of $\tilde{\la}$, we have $\partial \tilde{\la}/\partial{t}=0$.
We hence have, with $f^i:=f_0^i$,
\be (\nabla \la_t)|_{t=0}=\sum_{i=1}^{d-1}\deri{\tilde{\la}}{\xi_i}f^i. \label{dev}\ee
It remains to compute the covariant vectors $f^i$ for $i=1,d-1$.
In this view we search $f^1$ in the form 
\[f^1=\sum_{i=1}^{d-1}\beta_i e^i+\alpha \nu.\]
The coefficients $\alpha,\beta_i$ are uniquely defined by
\[f^1\cdot e_1=1,\quad f^1\cdot e_j=0,\quad j=2,d-1,\quad f^1\cdot\eps=0.\]
This implies that
\[\beta_1=1,\quad \beta_j=0,\quad j=2,d-1,\quad \alpha(\nu\cdot\eps)=-e^1\cdot\eps.\]
As a conclusion, we have
\[f^1=e^1-\frac{1}{\nu\cdot\eps}(e^1\cdot\eps)\nu.\]
We obtain a symmetric expression for $f^i$, $i=2,d-1$ and coming back to (\ref{dev}), we obtain
\[(\nabla \la_t\cdot\nu_t)|_{t=0}=(\nabla \la_t)|_{t=0}\cdot\nu=-\sum_{i=1}^{d-1}\frac{1}{\nu\cdot\eps}(e^i\cdot\eps)\deri{\tilde{\la}}{\xi_i},\]
and lastly,
\[(\nu\cdot\eps)(\nabla \la_t\cdot\nu_t)|_{t=0}=-(\nabla_{\Gamma} \la\cdot\eps),\]
which completes the proof of the first statement of Lemma \ref{lem}.\\
Now let us give the proof of the second statement of Lemma \ref{lem}.
In this view we also need an expression of the covariant vector $f^d$.
We again search $f^d$ in the form 
\[f^d=\sum_{i=1}^{d-1}\beta_i e^i+\alpha \nu.\]
The coefficients $\alpha,\beta_i$ are now uniquely defined by
\[f^d\cdot e_i=0,\quad i=1,d-1,\quad f^d\cdot\eps=1.\]
After simple calculations, we obtain
\[f^d=\frac{1}{\nu\cdot\eps}\nu.\]
We have 
\[{\rm div} \nu_t=\sum_{i=1}^{d-1} \deri{\nu_t}{\xi_i}\cdot f_t^i + \deri{\nu_t}{t}\cdot f_t^d.\]
By differentiation of $|\nu_t|^2=1$ with respect to $\xi_i$ and $t$,
we obtain
\[\deri{\nu_t}{\xi_i}|_{t=0}.\nu=0,\quad i=1,d-1,\quad \deri{\nu_t}{t}|_{t=0}.\nu=0,\]
hence
\[ 
\left(\sum_{i=1}^{d-1} \deri{\nu_t}{\xi_i}\cdot f_t^i\right)|_{t=0}=\sum_{i=1}^{d-1} \deri{\nu}{\xi_i}\cdot e^i={\rm div}_{\Gamma} \nu,\]
and we obtain the second thesis of Lemma \ref{lem}.\\
Lastly, let us give the proof of the third statement of Lemma \ref{lem}.
Let us denote
\[G=\nabla_{\Gamma_t}u\cdot\nabla_{\Gamma_t}w\]
and $\tilde{G}_t=G \circ \phi_t$. Given the definition of surface gradient (\ref{surface}), we have
\[\nabla G= \sum_{i=1}^{d-1}\deri{\tilde{G}_t}{\xi_i}f_t^i + \deri{\tilde{G}_t}{t}f_t^d.\]
By using the expressions obtained above for the covariant vectors $f_t^i$, $i=1,d$, we obtain
\[(\nu\cdot\eps)(\nabla G\cdot\nu_t)|_{t=0}=-\sum_{i=1}^{d-1}(e^i\cdot\eps)\deri{\tilde{G}_t}{\xi_i}|_{t=0}+ \deri{\tilde{G}_t}{t}|_{t=0},\]
that is
\be (\nu\cdot\eps)(\nabla G\cdot\nu_t)|_{t=0}=-\eps_{\Gamma}\cdot\nabla_{\Gamma}(\nabla_{\Gamma}u\cdot\nabla_{\Gamma}w)+  \deri{\tilde{G}_t}{t}|_{t=0}.\label{inter}\ee
We now have to compute $\partial \tilde{G}_t/\partial t$ at $t=0$.
We have
\[\deri{\tilde{G}_t}{t}=\sum_{i,j=1}^{d-1}\deri{.}{t}\left(\deri{\tilde{u}_t}{\xi_i}e^i_t\cdot\deri{\tilde{w}_t}{\xi_j}e^j_t\right),\]
with
\[\deri{.}{t}\left(\deri{\tilde{u}_t}{\xi_i}e^i_t\cdot\deri{\tilde{w}_t}{\xi_j}e^j_t\right)=\derd{\tilde{u}_t}{\xi_i}{t}\deri{\tilde{w}_t}{\xi_j}e^i_t\cdot e^j_t + \deri{\tilde{u}_t}{\xi_i}\derd{\tilde{w}_t}{\xi_j}{t}e^i_t\cdot e^j_t + \deri{\tilde{u}_t}{\xi_i}\deri{\tilde{w}_t}{\xi_j}
\left(\deri{e^i_t}{t}\cdot e^j_t + e^i_t\cdot\deri{e^j_t}{t}\right).\]
From differentiation with respect to $t$ of 
\[(Id + t (\nabla \eps)^T)e^i_t=e^i,\]
we obtain 
\[(\nabla \eps)^T e^i_t+ (Id + t (\nabla \eps)^T))\deri{e^i_t}{t}=0,\]
hence
\[\deri{e^i_t}{t}=-(Id + t (\nabla \eps)^T)^{-1}(\nabla \eps)^T e^i_t,\]
and in particular
\[\deri{e^i_t}{t}|_{t=0}=-(\nabla \eps)^T\cdot e^i.\]
We arrive at
\begin{align*}\deri{\tilde{G}_t}{t}|_{t=0}=&\nabla_{\Gamma}\left(\deri{\tilde{u}_t}{t}|_{t=0}\right)\cdot\nabla_{\Gamma}w+\nabla_{\Gamma}u\cdot\nabla_{\Gamma}\left(\deri{\tilde{w}_t}{t}|_{t=0}\right)\\
&-\nabla_{\Gamma}u\cdot\nabla \eps\cdot\nabla_{\Gamma}w-\nabla_{\Gamma}u\cdot(\nabla \eps)^T\cdot\nabla_{\Gamma}w.\end{align*}
By using 
\[\deri{\tilde{u}_t}{t}|_{t=0}=\nabla u\cdot\eps,\quad \deri{\tilde{w}_t}{t}|_{t=0}=\nabla w\cdot\eps\]
as well as the decomposition $\eps=\eps_{\Gamma}+ (\eps\cdot\nu)\nu$, we obtain
\begin{align*}\deri{\tilde{G}_t}{t}|_{t=0}=\nabla_{\Gamma}(\nabla_{\Gamma}u\cdot\eps_{\Gamma}+&(\nabla u\cdot\nu)(\eps\cdot\nu))\cdot\nabla_{\Gamma}w+ \nabla_{\Gamma}u\cdot\nabla_{\Gamma}(\nabla_{\Gamma}w\cdot\eps_{\Gamma}+(\nabla w\cdot\nu)(\eps\cdot\nu))\\
&-\nabla_{\Gamma}u\cdot(\nabla \eps+ (\nabla \eps)^T))\cdot\nabla_{\Gamma}w.\end{align*}
We complete the proof of Lemma \ref{lem} by using equation (\ref{inter}).
\section*{Acknowledgments} 
The work of Nicolas Chaulet is supported by a grant from D\'el\'egation G\'en\'erale de l'Armement.
\bibliography{article_final_rev}

\begin{thebibliography}{10}

\bibitem{All}
{\sc G.~Allaire}, {\em Conception optimale de structures}, Springer-Verlag,
  2007.

\bibitem{haddar}
{\sc B.~Aslanyürek, H.~Haddar, and H.~Sahintürk}, {\em Generalized impedance
  boundary conditions for thin dielectric coatings with variable thickness},
  Wave Motion, 48 (2011), pp.~680 -- 699.

\bibitem{Ba09}
{\sc V.~Bacchelli}, {\em Uniqueness for the determination of unknown boundary
  and impedance with the homogeneous {R}obin condition}, Inverse Problems, 25
  (2009), pp.~015004, 4.

\bibitem{BenLem96}
{\sc A.~Bendali and K.~Lemrabet}, {\em The effect of a thin coating on the
  scattering of a time-harmonic wave for the helmholtz equation}, SIAM J. Appl.
  Math., 56 (1996), pp.~1664--1693.

\bibitem{BoChHa10report}
{\sc L.~Bourgeois, N.~Chaulet, and H.~Haddar}, {\em Identification of
  generalized impedance boundary conditions: some numerical issues}, Tech.
  Report INRIA 7449, 11 2010.

\bibitem{BoChHa11report}
\leavevmode\vrule height 2pt depth -1.6pt width 23pt, {\em On simultaneous
  identification of a scatterer and its generalized impedance boundary
  condition}, Tech. Report INRIA 7645, 6 2011.

\bibitem{BoChHa11}
\leavevmode\vrule height 2pt depth -1.6pt width 23pt, {\em Stable
  reconstruction of generalized impedance boundary conditions}, Inverse
  Problems, 27 (2011), p.~095002.

\bibitem{bourgeois_haddar}
{\sc L.~Bourgeois and H.~Haddar}, {\em Identification of generalized impedance
  boundary conditions in inverse scattering problems}, Inverse Probl. Imaging,
  4 (2010), pp.~19--38.

\bibitem{CaCo04}
{\sc F.~Cakoni and D.~Colton}, {\em The determination of the surface impedance
  of a partially coated obstacle from far field data}, SIAM J. Appl. Math., 64
  (2003/04), pp.~709--723 (electronic).

\bibitem{CaKrSc10}
{\sc F.~Cakoni, R.~Kress, and C.~Schuft}, {\em Integral equations for shape and
  impedance reconstruction in corrosion detection}, Inverse Problems, 26
  (2010), pp.~095012, 24.

\bibitem{book-CK98}
{\sc D.~Colton and R.~Kress}, {\em Inverse acoustic and electromagnetic
  scattering theory}, vol.~93 of Applied Mathematical Sciences,
  Springer-Verlag, Berlin, second~ed., 1998.

\bibitem{DurHadJol06}
{\sc M.~Durufl{\'e}, H.~Haddar, and J.~Joly}, {\em Higher order generalized
  impedance boundary conditions in electromagnetic scattering problems}, C.R.
  Physique, 7 (2006), pp.~533--542.

\bibitem{HadJol02}
{\sc H.~Haddar and P.~Joly}, {\em Stability of thin layer approximation of
  electromagnetic waves scattering by linear and nonlinear coatings}, J.
  Comput. Appl. Math., 143 (2002), pp.~201--236.

\bibitem{HadJolNgu05}
{\sc H.~Haddar, P.~Joly, and H.-M. Nguyen}, {\em Generalized impedance boundary
  conditions for scattering by strongly absorbing obstacles: the scalar case},
  Math. Models Methods Appl. Sci., 15 (2005), pp.~1273--1300.

\bibitem{haddar_kress}
{\sc H.~Haddar and R.~Kress}, {\em On the {F}r\'echet derivative for obstacle
  scattering with an impedance boundary condition}, SIAM J. Appl. Math., 65
  (2004), pp.~194--208 (electronic).

\bibitem{HeKinSin09}
{\sc L.~He, S.~Kindermann, and M.~Sini}, {\em Reconstruction of shapes and
  impedance functions using few far-field measurements}, Journal of
  Computational Physics, 228 (2009), pp.~717--730.

\bibitem{HenPie}
{\sc A.~Henrot and M.~Pierre}, {\em Variation et optimisation de formes},
  vol.~48 of Math\'ematiques \& Applications (Berlin) [Mathematics \&
  Applications], Springer, Berlin, 2005.
\newblock Une analyse g{\'e}om{\'e}trique. [A geometric analysis].

\bibitem{isakov}
{\sc V.~Isakov}, {\em On uniqueness in the inverse transmission scattering
  problem}, Comm. Partial Differential Equations, 15 (1990), pp.~1565--1587.

\bibitem{kirsch_kress}
{\sc A.~Kirsch and R.~Kress}, {\em Uniqueness in inverse obstacle scattering},
  Inverse Problems, 9 (1993), pp.~285--299.

\bibitem{KrePai}
{\sc R.~Kress and L.~P{\"a}iv{\"a}rinta}, {\em On the far field in obstacle
  scattering}, SIAM J. Appl. Math., 59 (1999).

\bibitem{kress_rundell}
{\sc R.~Kress and W.~Rundell}, {\em Inverse scattering for shape and
  impedance}, Inverse Problems, 17 (2001), p.~1075.

\bibitem{LiuNakSin07}
{\sc J.~J. Liu, G.~Nakamura, and M.~Sini}, {\em Reconstruction of the shape and
  surface impedance from acoustic scattering data for an arbitrary cylinder},
  SIAM J. Appl. Math., 67 (2007), pp.~1124--1146.

\bibitem{book-nedelec}
{\sc J.-C. N\'{e}d\'{e}lec}, {\em Acoustic and electromagnetic equations},
  Applied Mathematical Sciences, Springer-Verlag, Berlin, 2001.

\bibitem{Ru08}
{\sc W.~Rundell}, {\em Recovering an obstacle and a nonlinear conductivity from
  {C}auchy data}, Inverse Problems, 24 (2008), pp.~055015, 12.

\bibitem{Ser06}
{\sc P.~Serranho}, {\em A hybrid method for inverse scattering for shape and
  impedance}, Inverse Problems, 22 (2006), pp.~663--680.

\bibitem{Si10}
{\sc E.~Sincich}, {\em Stability for the determination of unknown boundary and
  impedance with a robin boundary condition}, SIAM J. Math. Anal., 42 (2010),
  pp.~2922--2943.

\bibitem{sokzol}
{\sc J.~Soko{\l}owski and J.-P. Zol{\'e}sio}, {\em Introduction to shape
  optimization}, vol.~16 of Springer Series in Computational Mathematics,
  Springer-Verlag, Berlin, 1992.
\newblock Shape sensitivity analysis.

\bibitem{FF++}
{\sc www.freefem.org/ff++}, {\em FreeFem++}.

\end{thebibliography}
\bibliographystyle{siam} 
\end{document}